\documentclass[11pt]{article}
\usepackage{graphicx}
\usepackage{amsfonts}
\usepackage{amsmath}
\usepackage{amssymb}
\usepackage{amsthm}
\usepackage{enumitem}
\usepackage[top=1in,bottom=1in,left=1in,right=1in]{geometry}
\usepackage{bbm}
\usepackage{booktabs}
\usepackage{hyperref}

\DeclareUnicodeCharacter{2032}{\ensuremath{'}}

\theoremstyle{plain}
\newtheorem{theorem}{Theorem}[section]
\newtheorem{proposition}[theorem]{Proposition}
\newtheorem{lemma}[theorem]{Lemma}
\newtheorem{corollary}[theorem]{Corollary}

\theoremstyle{definition}
\newtheorem{example}[theorem]{Example}

\theoremstyle{remark}
\newtheorem{remark}[theorem]{Remark}

\usepackage[giveninits=true,backend=biber, maxnames=4,maxalphanames=4,style=alphabetic,date=year,doi=true,isbn=true,url=false,eprint=false]{biblatex}
\DeclareSourcemap{
  \maps[datatype=bibtex]{
    \map[overwrite=true]{
      \step[fieldsource=shortjournal, final]
      \step[fieldset=journaltitle, origfieldval]
    }
    \map{
      \step[fieldsource=doi, final]
      \step[fieldset=eprint, null]
      \step[fieldset=eprinttype, null]
      \step[fieldset=eprintclass, null]
      \step[fieldset=url, null]
      \step[fieldset=urldate, null]
    }
    \map{
      \step[fieldsource=eprint, final]
      \step[fieldset=url, null]
      \step[fieldset=urldate, null]
      \step[fieldsource=eprint, match=\regexp{\s*\[[^\]]*\]}, replace={}]
    }
    \map{
      \step[fieldsource=note, match=\regexp{_eprint}, final]
      \step[fieldset=note, null]
    }
    \map{
      \step[fieldset=file, null]
      \step[fieldset=abstract, null]
    }
    \map{
      \step[fieldsource=note, match=\regexp{\Atex\.note:\s*}, replace={}]
    }
  }
}
\renewbibmacro{in:}{}
\AtEveryBibitem{%
  \clearlist{language}%
  \clearfield{issn}
}

\newcommand{\SampleFirst}{1925-12}
\newcommand{\SampleLast}{2024-12}
\newcommand{\SampleFirstYear}{1925}
\newcommand{\SampleLastYear}{2024}
\newcommand{\NFirms}{26{,}100}
\newcommand{\NFirmMonths}{3{,}751{,}795}
\newcommand{\AvgFirms}{3{,}155}
\newcommand{\NEnt}{19{,}567}
\newcommand{\NExit}{16{,}253}
\newcommand{\NEntNew}{18{,}747}
\newcommand{\NEntSpin}{820}
\newcommand{\NExitMerger}{6{,}796}
\newcommand{\NExitDropped}{8{,}879}
\newcommand{\NExitLiq}{578}
\newcommand{\PctExitMerger}{42}
\newcommand{\NTechDropped}{8{,}577}
\newcommand{\PctExitRawTechnical}{27}
\newcommand{\PctExitBottomFive}{31}
\newcommand{\PctExitBottomFiveOrg}{53}
\newcommand{\MedRankMerger}{0.53}
\newcommand{\MedRankDrop}{0.04}
\newcommand{\NExitLiqWin}{477}
\newcommand{\RateBirthMonthly}{5.78}
\newcommand{\RateDeathMonthly}{4.98}
\newcommand{\RateDeathAnnualPct}{6.0}
\newcommand{\NetGrowthAnnualPct}{1.0}
\newcommand{\PoisChiDfEnt}{1.1}
\newcommand{\PoisChiDfExit}{1.4}
\newcommand{\NBAlphaEnt}{0.00}
\newcommand{\NBAlphaExit}{0.00}
\newcommand{\NBKSEnt}{0.004}
\newcommand{\NBKSExit}{0.012}
\newcommand{\FCross}{0.63}
\newcommand{\DfSplineEnt}{8}
\newcommand{\DfSplineExit}{8}
\newcommand{\NBins}{100}

\newcommand{\MuTildeMonthly}{0.0049}
\newcommand{\MuTildeAnnual}{0.059}
\newcommand{\SigmaMonthly}{0.135}
\newcommand{\SigmaAnnual}{0.467}
\newcommand{\InitMonth}{1975-01}
\newcommand{\NMain}{10{,}000}
\newcommand{\SupErrSixty}{0.010}
\newcommand{\SupErrOneTwenty}{0.013}
\newcommand{\SupErrTwoForty}{0.011}
\newcommand{\SweepErrSmall}{0.052}
\newcommand{\SweepErrBig}{0.010}
\newcommand{\NSweepSmall}{500}
\newcommand{\NSweepBig}{20{,}000}
\newcommand{\GrowthSim}{1.185}
\newcommand{\GrowthTheory}{1.211}
\newcommand{\WaveSpeedAnnual}{0.100}
\newcommand{\TurnoverCorrAnnual}{+0.0412}

\newcommand{\DecBirthRateMin}{1.8}
\newcommand{\DecBirthRateMax}{9.5}
\newcommand{\DecDeathRateMin}{0.3}
\newcommand{\DecDeathRateMax}{7.4}
\newcommand{\TwoThousandsBirthRate}{4.1}
\newcommand{\TwoThousandsDeathRate}{7.4}

\newcommand{\CEmpAnnual}{0.085}
\newcommand{\MeasVolBottom}{0.98}
\newcommand{\MeasVolTop}{0.31}

\newcommand{\GrowthTotalFull}{8.3}
\newcommand{\GrowthMedianFull}{4.3}
\newcommand{\GrowthCountFull}{2.6}
\newcommand{\GrowthShapeFull}{1.4}
\newcommand{\GrowthTotalRecent}{10.0}
\newcommand{\GrowthMedianRecent}{-1.2}
\newcommand{\GrowthShapeRecent}{9.5}

\newcommand{\ZCILHundred}{0.25}
\newcommand{\CSILHundred}{0.89}
\newcommand{\BMILHundred}{0.19}
\newcommand{\RECILHundred}{0.11}
\newcommand{\RECILThousand}{0.09}
\newcommand{\ZCConv}{0.0009}
\newcommand{\CSConv}{0.0008}
\newcommand{\WidthData}{3.8}
\newcommand{\NDecAll}{10}
\newcommand{\DecErrMin}{0.15}
\newcommand{\DecErrMed}{0.29}
\newcommand{\DecErrMax}{0.60}
\newcommand{\DecErrRecMin}{0.05}
\newcommand{\DecErrRecMed}{0.22}
\newcommand{\AlphaData}{1.06}
\newcommand{\AlphaRecWave}{1.11}
\newcommand{\AlphaZCWave}{1.53}
\newcommand{\AlphaDataHill}{1.07}
\newcommand{\AlphaRecWaveHill}{1.13}

\newcommand{\DriftRelBottom}{-0.26}
\newcommand{\DriftRelMid}{-0.06}
\newcommand{\DriftRelTop}{-0.04}
\newcommand{\DriftChordDevMax}{0.031}
\newcommand{\RelaxHalfBottomMonths}{6}
\newcommand{\RelaxHalfTopYears}{53}
\newcommand{\EOneFailShareMeas}{89}

\newcommand{\RelaxEmpBottomYears}{10}
\newcommand{\RelaxEmpTopYears}{37}
\newcommand{\RelaxEmpBottomDemeanMonths}{7}
\newcommand{\RelaxEmpTopDemeanMonths}{15}

\newcommand{\PortEWExcess}{17.5}
\newcommand{\PortEWTurn}{-8.1}
\newcommand{\PortEWDrag}{6.0}
\newcommand{\PortEWRel}{3.4}
\newcommand{\PortMktTurn}{1.0}
\newcommand{\PortMktDrag}{1.5}
\newcommand{\PortMktRel}{-0.5}
\newcommand{\PortEWMinusMkt}{3.9}
\newcommand{\PortMktDilution}{1.5}
\newcommand{\PortEntryCountPct}{6.9}
\newcommand{\PortTopRankRel}{1.0}
\newcommand{\PortTopCapExc}{6.2}
\newcommand{\PortTopCapLeak}{5.2}
\newcommand{\PortTopCapRel}{1.0}
\newcommand{\PortTopCapCountStart}{679}
\newcommand{\PortTopCapCountPeak}{907}
\newcommand{\PortTopCapPeakYear}{1996}
\newcommand{\PortTopCapCountLate}{407}
\newcommand{\PortRecEWCents}{88}
\newcommand{\PortRecDropCents}{76}
\newcommand{\PortAcctResidMax}{0.2}

\newcommand{\BootReps}{1000}
\newcommand{\BootBandPct}{90}
\newcommand{\BootQStarLo}{0.61}
\newcommand{\BootQStarHi}{0.65}
\newcommand{\BootHalfBotLoMonths}{5.5}
\newcommand{\BootHalfBotHiMonths}{6.4}
\newcommand{\BootHalfTopLoYears}{46}
\newcommand{\BootHalfTopHiYears}{61}
\newcommand{\BootAlphaLo}{1.08}
\newcommand{\BootAlphaHi}{1.14}
\newcommand{\CenturyQStar}{0.61}
\newcommand{\CenturyHalfTopYears}{53}
\newcommand{\CenturyHalfBotMonths}{7}
\newcommand{\RecentFifteenFPrimeTopAnnual}{-0.56}
\newcommand{\RecentFifteenAlpha}{0.98}
\newcommand{\RecentFifteenHalfTopYears}{124}

\newcommand{\R}{\mathbb{R}}
\renewcommand{\P}{\mathbb{P}}
\newcommand{\E}{\mathbb{E}}
\renewcommand{\d}{\mathrm{d}}
\newcommand{\lb}{\left}
\newcommand{\rb}{\right}
\newcommand{\ind}{\mathbbm{1}}
\newcommand{\pp}{\partial}

\makeatletter
\def\@maketitle{%
  \newpage
  \begin{center}%
  \let \footnote \thanks
    {\LARGE \@title \par}%
    \vskip 1.5em%
    {\large
      \lineskip .5em%
      \begin{tabular}[t]{c}%
        \@author
      \end{tabular}\par}%
    \vskip 1em%
    {\large \@date}%
  \end{center}%
  \par
  \vskip 1.5em}
\makeatother

\title{Traveling Waves in Equity Markets with\\Rank-Based Entry and Exit}
\author{
Graeme Baker\footnote{Department of Statistics, Columbia University, NY, USA \href{mailto:g.baker@columbia.edu}{g.baker@columbia.edu} (Corresponding Author)}\and Caroline Smyth\footnote{Department of Mathematics, Columbia University (Class of 2026), NY, USA \href{mailto:css2215@columbia.edu}{css2215@columbia.edu}} }
\date{}

\begin{document}
\maketitle

\begin{abstract}
We model equity markets using geometric Brownian particles entering and exiting at rank-dependent intensities. In the many-firm limit, the capital distribution converges to the solution of a reaction-diffusion equation with reaction term built from the intensities. Calibrated on CRSP data, the reaction term is bistable, and the long-run distribution is a traveling wave: we prove existence, uniqueness, and, for constant coefficients, exponential relaxation. Turnover, not drift, stabilizes the calibrated market. With measured volatility, the wave tracks the empirical capital distribution in every decade, determines the capitalization growth of diversity-weighted portfolios, and places the market just inside the boundary of the diverse phase. Turnover reclaims most of what rebalancing gains.
\end{abstract}

\begin{quotation}\small
\noindent\textbf{Keywords:} capital distribution curve; stochastic portfolio theory; mean-field limit; reaction-diffusion equation; diversity-weighted portfolios.\\
\textbf{MSC 2020:} 60K35, 35K57 (Primary); 35C07, 91G10, 60F17 (Secondary).
\end{quotation}

\section{Introduction}

Equity markets show a remarkable regularity: the distribution of market capitalizations, from the largest firm down to the smallest, keeps roughly the same shape over decades despite regular firm turnover. Rank-based models of capital distributions, such as the Atlas model developed in \cite{fernholz_stochastic_2002,banner_atlas_2005}, explain this stability through rank-dependent growth rates and volatilities: small firms grow faster on average, which keeps the distribution from spreading out. These models, however, hold the set of firms fixed. In reality, firms enter the market through listings and spin-offs and leave it through failures and mergers, and this turnover is substantial: in the data we study below, the average firm faces roughly a \RateDeathAnnualPct\% chance of exiting in a given year.

Herein, we add entry and exit to the rank-based diffusion framework by a simple mechanism. We model the market as a system of particles, where each particle follows an independent geometric Brownian motion, and additionally, new particles are created and existing particles removed at rates that depend on the particle's rank within the current population. Our baseline model is deliberately parsimonious: apart from the constant drift $\mu$ and volatility $\sigma$, it is specified by two functions of rank, a birth intensity $\lambda_b$ and a death intensity $\lambda_d$, and both can be estimated directly from event data. 

Three findings organize the paper. First, calibrated on a half-century of Center for Research in Security Prices (CRSP) event data, the reaction term produced by entry and exit is bistable: deaths outpace births among the smallest firms, and entries outpace exits through the upper-middle ranks. Second, the bistable equation admits a unique traveling wave, which is the observed stable shape of the capital distribution. Relaxation toward the wave is bounded by the endpoint slopes of the reaction term, and the data read the slopes as two timescales: months at the bottom of the market, decades at the top. Third, the drift curve recovered from the data violates the chord condition of \cite{jourdain_capital_2015} at every rank, so for the measured coefficients a drift-stabilized equilibrium does not exist at all (Subsection~\ref{subsec:beyond}). Turnover is the existence mechanism for the wave, not a correction: removing it from the calibrated model removes convergence altogether. We say that the market is \emph{turnover-stabilized}, in contrast with previous literature on volatility-stabilized \cite{fernholz_relative_2005} and drift-stabilized markets \cite{banner_atlas_2005, jourdain_capital_2015}.

Our story proceeds in three steps. The mean-field limit of the particle system describes the market by a tractable partial differential equation (PDE). The PDE is calibrated directly from observable data: the intensities are fitted from event counts, the volatility is measured rank by rank, and the drift is recovered from a traveling-wave identity. We then use the calibrated model in two ways: we test it against a century of market growth, and we evaluate the capitalization growth of common passive portfolio rules along its wave, measuring the competition between rebalancing and turnover. Our contributions below follow this arc:

\begin{enumerate}[itemsep=1pt]
    \item We prove a limit theorem (Theorem~\ref{thm:mean-field}): as the number of firms grows, the empirical distribution of capitalizations converges to the solution of the semilinear PDE
    \begin{equation*}
        \pp_t u \;=\; \frac{\sigma^2 x^2}{2}\,\pp^2_{xx} u + (\sigma^2 - \mu)\, x \,\pp_x u + \tilde f(u),
    \end{equation*}
    where the reaction term $\tilde f$ is an explicit integral of $\lambda_b - \lambda_d$. The theorem also allows the drift and volatility to depend on rank, making the operator quasilinear as in the rank-based models of \cite{shkolnikov_large_2012, jourdain_propagation_2013}. The convergence is in the Kolmogorov distance (the uniform distance between distribution functions), uniformly on compact time intervals, where the prior limits without turnover are in the weak topology. For the long-run theory, Theorem~\ref{thm:wave} gives existence and uniqueness of the wave's speed and profile at the same generality, Corollary~\ref{cor:tail} identifies the wave's Pareto tail exponent, and Theorem~\ref{thm:relaxation} shows exponential sup-norm convergence to the wave for constant coefficients, at a rate bounded by the reaction term's endpoint slopes alone (Remark~\ref{rem:rank-dependent} states our expectation for the rank-dependent case).
        
   \item We calibrate $\lambda_b$ and $\lambda_d$ on the post-expansion half-century of CRSP monthly data, with exits identified from CRSP's recorded delisting reasons (Section~\ref{sec:empirics}).
The calibrated reaction term is bistable across decades, with the exception of the 2000s (Subsection~\ref{subsec:beyond} records which conclusions survive in that case). The volatility curve is measured rank by rank from returns; the drift, which time series cannot usefully identify (see Remark~\ref{rem:drift-estimation}), is recovered from the wave identity and yields a tight fit of the capital distribution curve, the log-log plot of market weights against rank (Subsection~\ref{subsec:drift-calib}).

    \item The shape of the wave, computed with the measured volatility curve and a constant, uncalibrated drift, tracks the empirical capital distribution curve, decade by decade as well as over the century (Subsection~\ref{subsec:ranked-vol}); no parameter is fitted to the curve itself.  An exact growth accounting shows that, outside of concentration episodes, the market grows by translation of the wave plus the entry of firms (Subsection~\ref{subsec:growth-acct}). The relaxation timescales agree with the century of data: persistence of shape deviations rises from months at the bottom of the market to decades in the top percent, in the order and on the scale predicted.

    \item The traveling wave explains the relative capitalization growth of passive portfolios. Along the wave, the growth rate of any $p$-diversity-weighted portfolio below a critical diversity index is an explicit integral against the profile (Proposition~\ref{prop:portfolio}). The wave identity eliminates the drift, the reduction formula of \cite{jourdain_capital_2015} gains two turnover corrections, and that paper's critical diversity index becomes the tail exponent of the wave. The index is an estimated functional of the model rather than an assumption: the calibrated market is diverse by a thin margin, and the margin vanishes on the coefficients of the most recent fifteen years (Remark~\ref{rem:diversity-loss}). With the calibrated coefficients, the equally weighted portfolio gains the excess growth rate of the variance curve, but turnover takes most of it back, while the market portfolio grows near the speed of the front. Decade by decade, the integral identity at $p = 1$ closes the market portfolio's accounting to within \PortAcctResidMax\% per year, with the concentration episodes carried by the shape term of the growth identity.
\end{enumerate}

\subsection{Background}
\label{subsec:background}

We briefly recall the basic units of our model and their precedents in the literature.

\paragraph{Geometric Brownian Motion.}
The classical model for a single stock price or market capitalization $X_t$ is the geometric Brownian motion
\begin{equation}\label{eq:gbm}
    \d X_t \;=\; \mu X_t \,\d t + \sigma X_t \,\d W_t,
\end{equation}
where $W$ is a standard Brownian motion, $\mu \in \R$ is the growth rate, and $\sigma > 0$ is the volatility. We recall this basic example to stress that the two parameters are on very different statistical footing: the volatility can be estimated from the quadratic variation of past data (realized volatility), with precision determined by the sampling frequency rather than by the length of the window. In contrast, the growth rate can only be learned from the length of the window, and even a century of data determines it loosely (see, for instance, \cite[Section 12.6]{bjork_arbitrage_2020}). This asymmetry persists to rank-dependent settings and will guide the modeling choices of Section~\ref{sec:wave} below.

By It\^o's formula, $Y_t = \log X_t$ is a Brownian motion with constant drift:
\begin{equation*}
    \d Y_t \;=\; \tilde\mu \,\d t + \sigma \,\d W_t, \qquad \tilde\mu = \mu - \frac{\sigma^2}{2}.
\end{equation*}
If many firms evolve independently by \eqref{eq:gbm}, the empirical fraction of firms with capitalization at most $x$ converges, by the law of large numbers, to a deterministic limit $u(t,x)$. Since $u$ is the cumulative distribution function (CDF) of the log-normal position of a single firm, it solves a linear parabolic equation, namely \eqref{eq:pde} below with $\tilde f = 0$.

\paragraph{Rank-Based Models.}
The Atlas model \cite{banner_atlas_2005} of stochastic portfolio theory (SPT) has the drift and volatility in \eqref{eq:gbm} depend on the firm's rank: the smallest firm receives an extra push, which stabilizes the shape of the capital distribution.
For background reading on Atlas models and SPT, we direct the reader towards \cite{fernholz_stochastic_2002, karatzas_stochastic_2009}. \cite{shkolnikov_large_2012} and \cite{jourdain_propagation_2013} show that as the number of firms tends to infinity, the empirical CDF in such models converges to the solution of a porous medium equation; the mean-field Atlas description is developed further through free boundary problems in \cite{atar_free_2025} and through SDEs with boundary interaction in \cite{jettkant_atlas_2025}. Our limit theorem allows for rank-dependent drift and volatility, and also extends the rank-dependence to entry and exit rates.
Reconciling rank-based models with empirical data continues to be of interest: \cite{itkin_calibrated_2026} calibrate rank-based volatility-stabilized models to CRSP capitalizations, and \cite{campbell_macroscopic_2025} document the macroscopic stylized facts that such calibrations target.
The calibrations mentioned above choose parameters to reproduce the curve, whereas the curve is a prediction of our model. In Subsection~\ref{subsec:ranked-vol}, we show that the long-run capital distribution curve is produced by turnover using coefficients measured directly from the data, and with no parameter fitted to the curve itself. 

\paragraph{Long-Run Behaviour.}
In the finite Atlas model, the gaps between the ranked log capitalizations have a stationary distribution and the configuration itself drifts upward at a common rate: the market is a traveling wave in log scale \cite{banner_atlas_2005}, with convergence rates for the finite system studied in \cite{ichiba_convergence_2013}. The stationarity rests on the theory of semimartingale reflected Brownian motion and inherits a delicate balance among the coefficients (see the survey \cite{williams_semimartingale_1995} and the references within; recent joint work of the first author addresses finite-time breakdown, stationarity, and self-similar solutions in the mean-field case with a critical parameter \cite{baker_particle_2026}). In the mean-field limit, \cite{jourdain_propagation_2013, jourdain_capital_2015} establish traveling-wave behaviour and explicit stationary capital distributions for the Atlas model, recovering a phase transition in the shape of the capital distribution earlier identified by \cite{chatterjee_phase_2010} (Section~\ref{sec:portfolio} identifies the critical index of that transition with the tail exponent of our wave). In all of these models, stabilization comes from the drift and volatility structure. In our model with bistable turnover, the system admits a unique traveling wave with no chord condition. The complete comparison is in Remark~\ref{rem:atlas} and Subsection~\ref{subsec:beyond}. From the viewpoint of reaction-diffusion theory this is the expected behaviour: the zeroth-order term determines the states connected by a front and, through its endpoint slopes, the spectral gap behind exponential relaxation, while the drift and the diffusion shape the profile between them; without a reaction term, the equation is a conservation law, stable by contraction, with no spectral gap and no rate (Subsection~\ref{subsec:beyond}).

\paragraph{Entry and Exit.}
Entry and exit have begun to receive attention in stochastic portfolio theory. On the market's side, \cite{karatzas_diverse_2016} build composition change into the dynamics, with competing Brownian particles that split and merge, keeping a finite, fluctuating number of firms; their splits and mergers are designed to sustain market diversity, while our entry and exit rates are estimated from historical data. On the investor's side, the open-markets literature \cite{karatzas_open_2021, itkin_open_2024} confines trading to the top $N$ stocks, whose membership changes over time. Our Sections~\ref{sec:empirics} through \ref{sec:wave} build a market model from the event data, and Section~\ref{sec:diversity} computes the growth of common portfolios using the model, with the leakage of top-set rules appearing as a boundary term (Remark~\ref{rem:leakage}).

\paragraph{Branching Particle Systems.}
Particle systems with births and deaths are classical objects. \cite{chauvin_stochastic_1990} use branching Brownian motions to represent and simulate solutions of reaction-diffusion equations. The connection between branching Brownian motions and the Fisher--KPP equation with its traveling-wave solutions has been well studied (see \cite{mckean_application_1975} and the monograph \cite{bramson_convergence_1983}). Our reaction term is bistable rather than monostable; the stability theory for bistable waves is classical \cite{fife_approach_1977}, and existence for the quasilinear class is covered in \cite{gilding_travelling_2004}. In the classical representations, the branching rate is a function of the particle's own position and the particles evolve independently, so the reaction term of the limiting equation is fixed in advance. For our setting, the rate depends on the particle's rank, that is, on the empirical distribution of the whole system, and the reaction term is therefore evaluated along the limiting distribution itself.

\section{A Particle System with Rank-Based Births and Deaths}

\subsection{The Model}
\label{subsec:finite-system}

Throughout, we fix a filtered probability space $(\Omega, \mathcal F, (\mathcal F_t)_{t \geq 0}, \P)$, and we work under the historical measure. 
Fix two continuous functions $\lambda_b, \lambda_d : [0,1] \to [0,\infty)$, the birth and death intensities, and coefficient functions $b : [0,1] \to \R$ and $\sigma : [0,1] \to (0,\infty)$. The system starts from $n$ particles with positions $X^1_0, \ldots, X^n_0 \in (0,\infty)$, sampled independently from a distribution with continuous CDF $u_0$. Each particle represents the market capitalization of one firm. Writing $n_t$ for the number of particles alive at time $t$, the dynamics are:
\begin{enumerate}[itemsep=1pt]
    \item \textbf{Diffusion.} Between events, each particle diffuses with rank-dependent coefficients,
    \begin{equation*}
        \d X^i_t \;=\; b(q_i(t))\, X^i_t \,\d t + \sigma(q_i(t))\, X^i_t \,\d W^i_t,
    \end{equation*}
    driven by its own independent Brownian motion. Between events the coefficients are piecewise constant in log scale (they change only when particles change rank), so the system of diffusions is well posed by the results of Bass and Pardoux \cite{bass_uniqueness_1987}; see also the discussion in \cite{banner_atlas_2005}.
    \item \textbf{Rank.} The rank of particle $i$ at time $t$ is its normalized position in the current population,
    \begin{equation*}
        q_i(t) \;=\; \frac{1}{n_t} \,\#\{j \text{ alive} : X^j_t \leq X^i_t\} \in (0, 1],
    \end{equation*}
    so $q_i \approx 0$ for the smallest firms and $q_i = 1$ for the largest.
    \item \textbf{Births and Deaths.} Particle $i$ carries two independent exponential clocks. At rate $\lambda_b(q_i(t))$ a new particle is created at the same position $X^i_t$; at rate $\lambda_d(q_i(t))$ the particle is removed.
\end{enumerate}
Placing the newborn at the parent's position is a modeling convenience; Remark~\ref{rem:immigration} explains why the large-population limit does not depend on it. Since the total event rate per particle is at most $\Lambda := \sup \lambda_b + \sup \lambda_d < \infty$, the population is dominated by a birth process with rate $\Lambda$ and cannot explode in finite time.

The macroscopic objects we study are the scaled empirical measure, its scaled CDF, and total mass:
\begin{equation*}
    \mu^n_t \;=\; \frac{1}{n} \sum_{i \text{ alive}} \delta_{X^i_t},
    \qquad
    F^n(t,x) \;=\; \mu^n_t\big((0, x]\big),
    \qquad
    m^n_t \;=\; F^n(t, \infty) \;=\; \frac{n_t}{n}.
\end{equation*}
Note that $F^n$ is normalized by the \emph{initial} population size $n$, so its total mass $m^n_t$ moves as firms enter and exit, while the ranks $q_i$ are computed within the \emph{current} population. Keeping these two normalizations distinct is what allows the limit to take a clean form.

\paragraph{The Reaction Term.}
Before stating the theorem, we explain where the nonlinearity comes from. Almost surely, at Lebesgue-almost every instant the ranks of the $n_t$ living particles are exactly the grid $\{1/n_t, 2/n_t, \ldots, 1\}$: ranks are always uniformly spread, no matter what the positions are. The total rate of death events at positions $\leq x$ is therefore
\begin{equation*}
    \sum_{i:\, X^i_t \leq x} \lambda_d(q_i(t))
    \;=\; n_t \cdot \frac{1}{n_t}\sum_{k=1}^{\lfloor n_t F^n(t,x)/m^n_t \rfloor} \lambda_d\lb(\tfrac{k}{n_t}\rb)
    \;\approx\; n \, m^n_t \int_0^{F^n(t,x)/m^n_t} \lambda_d(r)\, \d r,
\end{equation*}
a Riemann sum of the intensity over the ranks below $x$, and similarly for births. Defining
\begin{equation*}
    f(q) \;=\; \int_0^q \big(\lambda_b(r) - \lambda_d(r)\big)\, \d r, \qquad q \in [0,1],
\end{equation*}
the net effect of births and deaths on $F^n(t,x)$ is $\approx m^n_t\, f\!\lb(F^n(t,x)/m^n_t\rb)$ per unit time. In particular, up to a Riemann-sum error of order $1/n$, the total mass evolves at rate $f(1) = r_b - r_d$, where $r_b = \int_0^1 \lambda_b$ and $r_d = \int_0^1 \lambda_d$ are the aggregate birth and death rates; and the normalized CDF sees the tilted nonlinearity
\begin{equation}\label{eq:f-tilde}
    \tilde f(q) \;=\; f(q) - q f(1),
\end{equation}
which satisfies $\tilde f(0) = \tilde f(1) = 0$ automatically. The second term in \eqref{eq:f-tilde} is a \emph{dilution} effect: when the population grows, every existing firm's rank is pushed down slightly by the newcomers.

\subsection{The Mean-Field Limit}

\begin{theorem}\label{thm:mean-field}
Assume $\lambda_b, \lambda_d, b \in C^1([0,1])$ and $\sigma^2 \in C^1([0,1])$ with $\min_{[0,1]} \sigma^2 > 0$, and let the initial positions be independent samples from a distribution on $(0,\infty)$ with continuous CDF $u_0$ and $\E|\log X^1_0| < \infty$. Set $m(t) := e^{f(1)t}$. Then for every $T > 0$,
\begin{equation}\label{eq:unifconv}
    \sup_{t \leq T}\, \Big( \big|m^n_t - m(t)\big| + \sup_{x > 0}\, \big| F^n(t,x) - m(t)\, w(t,\log x) \big| \Big) \;\longrightarrow\; 0
    \quad \text{in probability as } n \to \infty,
\end{equation}
where $w$ is the unique CDF solution (in the sense of Lemma~\ref{lem:uniqueness} below), in the log-capitalization variable $y$, of the quasilinear reaction-diffusion equation
\begin{equation}\label{eq:pde-general}
    \pp_t w \;=\; \pp^2_{yy} A(w) - \pp_y B(w) + \tilde f(w),
    \qquad w(0, y) = u_0(e^y),
\end{equation}
with
\begin{equation*}
    A(q) \;=\; \int_0^q \frac{\sigma^2(r)}{2}\,\d r, \qquad
    B(q) \;=\; \int_0^q \tilde b(r)\,\d r, \qquad
    \tilde b \;=\; b - \frac{\sigma^2}{2}.
\end{equation*}
\end{theorem}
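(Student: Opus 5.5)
We follow the standard route for mean-field limits of rank-based systems, as in the porous-medium limits of \cite{shkolnikov_large_2012, jourdain_propagation_2013}: tightness, identification of every limit point as a weak solution of \eqref{eq:pde-general}, uniqueness (Lemma~\ref{lem:uniqueness}), and an upgrade from weak to Kolmogorov convergence. We work throughout in the log variable $Y^i = \log X^i$, so that between events each particle has drift $\tilde b(q_i)$ and diffusion $\sigma^2(q_i)$.

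\textbf{Step 1: mass.} The mass process $m^n_t$ is a pure-jump process with jumps of size $\pm 1/n$. Its compensator is $\int_0^t n_s \cdot n^{-1}\sum_k (\lambda_b-\lambda_d)(k/n_s)/n_s\,\d s$, which equals $\int_0^t m^n_s (f(1)+O(1/n_s))\,\d s$ by the Riemann-sum observation preceding \eqref{eq:f-tilde}. The martingale part has quadratic variation of order $\Lambda\, \sup_{s \leq t} m^n_s /n$, and domination by a rate-$\Lambda$ birth process gives $\E\sup_{t\le T} m^n_t \le e^{\Lambda T}$. Doob's inequality and Gronwall then give $\sup_{t\le T}|m^n_t-m(t)|\to0$ in probability.

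\textbf{Step 2: martingale problem for test functions.} For $\varphi\in C^2_c(\R)$, apply It\^o's formula to $\langle \mu^n_t,\varphi\circ\log\rangle$. The diffusion part produces $\frac1n\sum_i [\tilde b(q_i)\varphi'(Y^i)+\frac12\sigma^2(q_i)\varphi''(Y^i)]$. Because the ranks are a.e.\ the grid $k/n_t$, this equals, up to $O(1/n)$,
\[
\int \big[\varphi'(y)\, \pp_y\big(m B(G)\big) + \varphi''(y)\,\pp_y\big(m A(G)\big)\big],
\]
with $G=F^n/m^n$; after an integration by parts in the Stieltjes sense this becomes $-\int \varphi''\,m^nA(G)-\int\varphi'' \ldots$. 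The cleanest approach is to test against the CDF directly. The jump part produces $\int\varphi\, \d\big(m^n f(G)\big)$. The martingale terms, both Brownian and from the jumps, have quadratic variation $O(1/n)$. The result is an approximate weak formulation
\[
\langle F^n_t,\psi\rangle-\langle F^n_0,\psi\rangle\approx\int_0^t\big\langle m^n A(G^n),\psi''\big\rangle+\big\langle m^nB(G^n),\psi'\big\rangle+\big\langle m^n f(G^n),\psi\big\rangle\,\d s,
\]
for $\psi=\varphi'$, which is linear in $A,B,f$ applied to $G^n$. The $C^1$ assumptions make $A,B,f$ Lipschitz, so this expression is continuous under $L^1_{\mathrm{loc}}$ convergence of $G^n$.

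\textbf{Step 3: tightness and identification.} Tightness of $(\mu^n)$ in $D([0,T],\mathcal M(\R))$ follows from the Aldous criterion applied to the semimartingale decomposition in Step 2. Compact containment comes from $\E|\log X^1_0|<\infty$ together with bounded coefficients and bounded event rates, which give a uniform first-moment bound on $\langle\mu^n_t,|y|\rangle$. The main obstacle is passing to the limit inside the nonlinearities $A(G^n)$, $B(G^n)$, $f(G^n)$. Weak convergence of measures gives convergence of $G^n$ only at continuity points, so we need the limit to have no atoms. We would handle this as follows. Since $\sigma^2\ge\min\sigma^2>0$ and births are placed at parent positions, each particle's law at fixed time has a density bounded uniformly via Gaussian domination of the Bass--Pardoux diffusion with bounded measurable coefficients (Aronson-type bounds), and killing or branching only rescales by at most $e^{\Lambda T}$. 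It follows that $\E\mu^n_t(I)\le C(t)|I|$ for $t>0$, while at $t=0$ continuity of $u_0$ applies. Limit points are therefore atomless, $G^n\to G$ in $L^1_{\mathrm{loc}}(\d s\,\d y)$, and the limit is a weak solution with mass $m(t)$. Writing $F=m w$ and using $m'=f(1)m$ turns $f$ into $\tilde f$, so $w$ solves \eqref{eq:pde-general}. Lemma~\ref{lem:uniqueness} then identifies the limit, and the whole sequence converges in probability because the limit is deterministic.

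\textbf{Step 4: upgrade to Kolmogorov distance.} For each $t$, the limit CDF $m(t)w(t,\cdot)$ is continuous; at $t=0$ it is $u_0$ and for $t>0$ the density bound applies. Pointwise convergence of monotone functions to a continuous bounded monotone limit, with matching total masses from Step 1, is then uniform in $x$ (the P\'olya/Dini argument). Uniformity in $t\le T$ follows from two facts. First, the jumps of $F^n(\cdot,x)$ are of size $1/n$, so limit paths are continuous in $t$ and Skorokhod convergence to a continuous limit is uniform in time. Second, $w$ is jointly continuous, which lets us apply a P\'olya argument on the compact set $[0,T]\times[-\infty,\infty]$ with a finite grid in $(t,x)$ and monotonicity in $x$.
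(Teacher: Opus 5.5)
Your skeleton matches the paper's:
\begin{itemize}
\item Yule domination, Riemann sums and Gronwall for the mass;
\item the grid-of-ranks observation that turns $\langle \nu^n_s, h(G^n_s/m^n_s)\,\psi\rangle$ into a Stieltjes integral of $mA(G)$, $mB(G)$, $mf(G)$ up to $O(1/n)$;
\item tightness with a first-moment bound, identification, Lemma~\ref{lem:uniqueness}, and a P\'olya-type upgrade.
\end{itemize}

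\textbf{The gap.} The problem is where you get continuity of the limit. You derive it from the particle-level estimate $\E\mu^n_t(I)\le C(t)|I|$, which you justify by Aronson-type Gaussian bounds. Those bounds are a divergence-form result and do not apply here:
\begin{itemize}
\item the $n$-particle system is a non-divergence-form diffusion with coefficients that are discontinuous across the hyperplanes $y_i=y_j$;
\item a single lineage is a one-dimensional It\^o process whose volatility $\sigma(q_i(s))$ is an adapted functional of the whole configuration.
\end{itemize}
For such processes, ellipticity and bounded coefficients alone do not give $\P(Y_t\in I)\le C|I|$: switching the volatility adaptively can concentrate the law near a point. You would also need the constant to be uniform in $n$.

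This estimate is load-bearing in your Step 4. The P\'olya upgrade needs $w(t,\cdot)$ continuous for every $t$ and $w$ jointly continuous, and the proposal has no other source for either; joint continuity is only asserted.

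\textbf{The repair, along the paper's lines.}
\begin{itemize}
\item \emph{Atomlessness is not needed for identification.} Along a Skorokhod representation, $G^n_s(y)\to G_s(y)$ at every continuity point of the nondecreasing function $G_s$, i.e.\ for all but countably many $y$. Bounded convergence then passes the Lipschitz nonlinearities to the limit, and no continuity of the limit is used. So your diagnosis that ``we need the limit to have no atoms'' is mistaken.
\item \emph{Continuity should come from the limit equation, not the particles.} In CDF form, \eqref{eq:pde-general} is uniformly parabolic in divergence form, $\pp_t w=\pp_y\big(\tfrac12\sigma^2(w)\,\pp_y w\big)-\pp_y B(w)+\tilde f(w)$. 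Classical theory therefore gives an admissible solution whose marginals are continuous CDFs. Weak continuity in $t$ plus the fixed-time P\'olya lemma upgrades this to sup-norm continuity in time, hence joint continuity; this is Lemma~\ref{lem:regularity}. Uniqueness then identifies every limit point with that regular solution, and only at that point does the uniform upgrade apply.
\item \emph{The time direction in the upgrade.} It is not handled by a grid, since $F^n$ is not monotone in $t$. What works is uniform-in-time convergence of pairings with finitely many compactly supported functions sandwiching the indicators at the spatial grid points, with Dini's theorem for the tails, as in Lemma~\ref{lem:polya-uniform}.
\end{itemize}
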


Throughout the paper, $w$ denotes the normalized CDF in the log-capitalization variable $y$, and $u$ its expression in the capitalization variable $x = e^y$. The chain rule gives:

\begin{corollary}\label{prop:constant}
If the drift and volatility are constant, $b \equiv \mu$ and $\sigma(\cdot) \equiv \sigma$, then $u(t,x) := w(t, \log x)$ solves, in the original capitalization variable,
\begin{equation}\label{eq:pde}
    \pp_t u \;=\; \frac{\sigma^2 x^2}{2}\,\pp^2_{xx} u + (\sigma^2 - \mu)\, x \,\pp_x u + \tilde f(u),
    \qquad u(0, \cdot) = u_0.
\end{equation}
\end{corollary}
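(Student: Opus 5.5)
With constant coefficients the functions in Theorem~\ref{thm:mean-field} are linear: $A(q) = \tfrac{\sigma^2}{2} q$ and $B(q) = \tilde\mu\, q$, where $\tilde\mu = \mu - \tfrac{\sigma^2}{2}$. So \eqref{eq:pde-general} becomes the semilinear equation
\begin{equation*}
    \pp_t w \;=\; \frac{\sigma^2}{2}\,\pp^2_{yy} w - \tilde\mu\,\pp_y w + \tilde f(w), \qquad w(0,y) = u_0(e^y).
\end{equation*}
The corollary then follows by the substitution $y = \log x$. The plan is to set $u(t,x) = w(t,\log x)$ and differentiate with the chain rule. Time is untouched, so $\pp_t u(t,x) = \pp_t w(t,\log x)$. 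In space, $\pp_x u = x^{-1}\pp_y w$ and $\pp^2_{xx} u = x^{-2}\bigl(\pp^2_{yy} w - \pp_y w\bigr)$. Inverting these relations gives $\pp_y w = x\,\pp_x u$ and $\pp^2_{yy} w = x^2\pp^2_{xx} u + x\,\pp_x u$.

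Substituting into the semilinear equation gives
\begin{equation*}
    \pp_t u \;=\; \frac{\sigma^2 x^2}{2}\,\pp^2_{xx} u + \Bigl(\frac{\sigma^2}{2} - \tilde\mu\Bigr) x\,\pp_x u + \tilde f(u).
\end{equation*}
Since $\tfrac{\sigma^2}{2} - \tilde\mu = \sigma^2 - \mu$, this is exactly \eqref{eq:pde}. The reaction term evaluates $u$ pointwise, so it carries over unchanged. The initial condition also carries over: $u(0,x) = w(0,\log x) = u_0(x)$.

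The only step needing care is regularity. The solution $w$ from Theorem~\ref{thm:mean-field} is a CDF solution in the sense of Lemma~\ref{lem:uniqueness}, which may be weak rather than classical. If $w$ is classical for $t>0$, the computation above is literal. This is expected, because the equation is uniformly parabolic with constant coefficients and Lipschitz reaction, and standard parabolic regularity bootstraps a bounded solution to $C^{1,2}$ for $t>0$. If instead one works with weak formulations, the change of variables is applied to the test functions. Set $\psi(y) = \varphi(e^y)$ and change variables $dy = dx/x$ in each integral. After integration by parts, this gives the weak form of \eqref{eq:pde} with respect to the measure $dx/x$, which is the natural statement in the $x$ variable. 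I expect this bookkeeping to be the only obstacle, and it is routine.
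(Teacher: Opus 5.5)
Your proposal is correct and takes the same approach as the paper, which justifies the corollary with the single phrase ``the chain rule gives'' under $y = \log x$ applied to the constant-coefficient form of \eqref{eq:pde-general}; your derivatives $\partial_x u = x^{-1}\partial_y w$ and $\partial^2_{xx} u = x^{-2}(\partial^2_{yy} w - \partial_y w)$, together with $\tfrac{\sigma^2}{2} - \tilde\mu = \sigma^2 - \mu$, check out. Your regularity remark goes beyond what the paper writes. It is consistent with the paper's own observation, in Step 1 of the proof of Theorem~\ref{thm:relaxation}, that the constant-coefficient solution is classical for $t > 0$.
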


The limit in Theorem \ref{thm:mean-field} splits into two parts. In the limit, the number of firms grows (or shrinks) deterministically, following the mass curve $m(t) = e^{f(1)t}$. The \emph{shape} of the distribution, described by the normalized CDF $w$, evolves by \eqref{eq:pde-general}: the differential operator is the one satisfied by the CDF of a diffusion with rank-dependent coefficients, as in \cite{shkolnikov_large_2012, jourdain_propagation_2013}, and the reaction term $\tilde f(w)$ adds or removes mass according to the net local intensity of entry and exit at rank $w$. In the constant-coefficient case, the operator reduces to the familiar one for geometric Brownian motion; this is the version we validate numerically in Appendix~\ref{sec:validation}.

\begin{remark}
In the constant-coefficient case, the operator in \eqref{eq:pde} is not the Fokker--Planck operator in divergence form, because $u$ is a CDF rather than a density. One obtains it by integrating the Fokker--Planck equation for the density $\rho = \pp_x u$ over $(0, x]$:
\begin{equation*}
    \pp_t u \;=\; \int_0^x \Big[ - \pp_z(\mu z \rho) + \tfrac{1}{2}\pp^2_{zz}(\sigma^2 z^2 \rho) \Big] \,\d z
    \;=\; -\mu x \rho + \tfrac{1}{2}\pp_x(\sigma^2 x^2 \rho)
    \;=\; \frac{\sigma^2 x^2}{2} \pp^2_{xx} u + (\sigma^2 - \mu)x \pp_x u.
\end{equation*}
\end{remark}

\begin{remark}\label{rem:immigration}
In the particle system, a new firm is attached to a parent and born at the parent's position. One could instead let new firms enter at an independent position drawn from the current rank profile, which is closer to how listings actually work. Both mechanisms produce the same rate of births per rank interval, so they have the same mean-field limit.
\end{remark}

\subsection{Proof of Theorem \ref{thm:mean-field}}
\label{subsec:proof}

Throughout this subsection, we work in log coordinates $Y^i_t = \log X^i_t$, and It\^o's formula turns each particle into a diffusion with rank-dependent drift $\tilde b(q_i) = b(q_i) - \sigma^2(q_i)/2$ and diffusion coefficient $\sigma(q_i)$. The ranks are preserved because $\log$ is increasing. Write $\nu^n_t$ for the scaled empirical measure of the log-positions, and $G^n(t,y) = \nu^n_t((-\infty, y])$ for its scaled CDF. 

The proof follows the usual compactness argument for mean-field limits. We write the empirical measure as a semimartingale, bound its moments, and prove tightness. We then pass to the limit, identify the limit with the PDE, and prove uniqueness. Our new analysis is concentrated in four lemmata, which we state first. The first two turn weak convergence of scaled CDFs into uniform convergence: a fixed-time P\'olya lemma, and a uniform-in-time version. These results are needed for the conclusion of the theorem (Step 6 below) to be uniform in rank and time. Our third and fourth supporting lemmata give the uniqueness, and the existence with regularity, of CDF solutions of the limiting equation; uniqueness enters Step 5, and the regularity is what the uniform P\'olya lemma uses in Step 6. Steps 1 through 5 prove the law of large numbers in the weak topology of measure-valued paths, the mode of convergence in \cite{shkolnikov_large_2012, jourdain_propagation_2013}. Our applications below require uniform convergence. The objects of Sections~\ref{sec:wave} and \ref{sec:diversity} live at the top of the market: the capital distribution curve over the top ranks, and portfolios weighted by powers of capitalization. Uniform convergence in rank is what makes the model's statements at the leading edge meaningful.

We present our first P\'olya lemma:
\begin{lemma}\label{lem:polya}
Let $(H_n)$ be scaled CDFs (nondecreasing, right-continuous, $0 \leq H_n \leq C$) converging pointwise and at $\pm\infty$ to a continuous nondecreasing bounded $H$. Then $\sup_y |H_n(y) - H(y)| \to 0$.
\end{lemma}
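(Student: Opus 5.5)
This is the classical P\'olya argument: approximate $H$ on a finite grid and use monotonicity of both $H_n$ and $H$ to control the gaps between grid points. Write $H(-\infty) = \lim_{y\to-\infty} H(y)$ and $H(+\infty)$ for the limits, which exist because $H$ is monotone and bounded. The hypothesis ``converging at $\pm\infty$'' is read as $H_n(\pm\infty) \to H(\pm\infty)$, where $H_n(\pm\infty)$ are the corresponding limits of $H_n$. This reading is essential: without convergence at the ends, mass could escape to infinity and the supremum would not go to zero.

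Fix $\varepsilon > 0$. Since $H$ is continuous, nondecreasing and bounded, its range is contained in $[H(-\infty), H(+\infty)]$. By the intermediate value theorem we can choose finitely many points $y_1 < \dots < y_K$ with $H(y_1) - H(-\infty) \le \varepsilon$, $H(+\infty) - H(y_K) \le \varepsilon$, and $H(y_{k+1}) - H(y_k) \le \varepsilon$ for each $k$. If $H$ is constant on a stretch, the points can be placed freely there. Set $y_0 = -\infty$ and $y_{K+1} = +\infty$. By assumption, $H_n(y_k) \to H(y_k)$ at these $K+2$ points, including the endpoints. So there is $N$ such that $|H_n(y_k) - H(y_k)| \le \varepsilon$ for all $0 \le k \le K+1$ and all $n \ge N$.

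Now take any $y \in \mathbb{R}$ and let $k$ satisfy $y_k \le y < y_{k+1}$, with $0 \le k \le K$. By monotonicity of $H_n$ and the choice of $N$,
\begin{equation*}
H_n(y) \le H_n(y_{k+1}) \le H(y_{k+1}) + \varepsilon \le H(y_k) + 2\varepsilon \le H(y) + 2\varepsilon.
\end{equation*}
Symmetrically,
\begin{equation*}
H_n(y) \ge H_n(y_k) \ge H(y_k) - \varepsilon \ge H(y_{k+1}) - 2\varepsilon \ge H(y) - 2\varepsilon.
\end{equation*}
Right-continuity of $H_n$ is not needed here, only monotonicity; the bound $0 \le H_n \le C$ just ensures that the endpoint values are finite. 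Hence $\sup_y |H_n(y) - H(y)| \le 2\varepsilon$ for $n \ge N$, and since $\varepsilon$ was arbitrary, the claim follows.

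There is no real obstacle. The only point requiring care is the treatment of the two tails, which is exactly where convergence at $\pm\infty$ is used. Because the CDFs are scaled, with total mass not normalized to one, this step cannot be replaced by the usual "probability measures" shortcut that makes the tail limits automatic.
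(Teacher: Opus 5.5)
Your proof is correct and follows the paper's own argument: choose a finite grid whose $H$-values are $\varepsilon$-spaced, use convergence at those finitely many points and at $\pm\infty$, and sandwich every other $y$ between neighbouring grid points by monotonicity of $H_n$ and $H$. The only difference is presentational: you set $y_0=-\infty$ and $y_{K+1}=+\infty$ so the tails are handled by the same sandwich inequality, where the paper estimates the tails separately.
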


\begin{proof}
Partition the range of $H$ into $k$ equal pieces by points $y_1 < \cdots < y_{k-1}$ (possible since $H$ is continuous). For $y \in [y_j, y_{j+1}]$, monotonicity gives $H_n(y) - H(y) \leq H_n(y_{j+1}) - H(y_j) \leq H_n(y_{j+1}) - H(y_{j+1}) + C/k$, and similarly from below. For $y \leq y_1$, monotonicity gives $H_n(y) - H(y) \leq H_n(y_1) - H(-\infty)$ and $H(y) - H_n(y) \leq H(y_1) - H_n(-\infty)$, which the convergence at $-\infty$ and the grid make small. The tail above $y_{k-1}$ follows symmetrically, using $H_n(+\infty) \to H(+\infty)$. Taking $n \to \infty$ and then $k \to \infty$ proves the claim.
\end{proof}

Compare \cite[Lemma 2.11]{van_der_vaart_asymptotic_1998} for the case of probability CDFs, and a similar trick for monotone functions in the Skorokhod space is given by \cite[Corollaries 12.5.1 and 12.11.1]{whitt_stochastic-process_2002}. 
Our second lemma is the uniform-in-time version of the first. The hypothesis changes in two ways: the pointwise evaluations of the scaled CDFs are replaced by pairings with compactly supported continuous functions, and both these pairings and the masses are required to converge uniformly in time. The proof fixes a finite grid in advance, using the joint continuity of the limit for the spacing and Dini's theorem for the tails, and controls each grid point by a pair of test functions.

\begin{lemma}\label{lem:polya-uniform}
For each $t \in [0,T]$ let $\xi_t$ be the finite measure on $\R$ with scaled CDF $H(t,y) = m_t\, V(t,y)$, where $m$ is continuous and positive and $V$ is jointly continuous, nondecreasing in $y$, with $V(t, -\infty) = 0$ and $V(t, +\infty) = 1$ for every $t$. Let $(\xi^n_t)_{t \leq T}$ be paths of finite measures with scaled CDFs $H_n(t,y) = \xi^n_t((-\infty, y])$ and masses $m^n_t = \xi^n_t(\R)$. If
\begin{equation*}
    \sup_{t \leq T} \big| \langle \xi^n_t, \varphi \rangle - \langle \xi_t, \varphi \rangle \big| \;\longrightarrow\; 0
    \quad \text{for every } \varphi \in C_c(\R),
    \qquad \text{and} \qquad
    \sup_{t \leq T} \big| m^n_t - m_t \big| \;\longrightarrow\; 0,
\end{equation*}
then $\sup_{t \leq T} \sup_{y \in \R} \big| H_n(t,y) - H(t,y) \big| \to 0$.
\end{lemma}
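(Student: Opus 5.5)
We fix a finite grid in advance and control the scaled CDFs at each grid point from above and below by pairings with compactly supported test functions; monotonicity then interpolates between grid points, as in Lemma~\ref{lem:polya}.

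\textbf{Tails.} Fix $\varepsilon>0$. Since $V(t,\cdot)$ is a CDF for each $t$, the functions $t\mapsto V(t,-R)$ and $t\mapsto 1-V(t,R)$ are continuous on $[0,T]$ and decrease pointwise to $0$ as $R\to\infty$. By Dini's theorem they converge uniformly, so we can choose $R$ with $\sup_t H(t,-R)\le\varepsilon$ and $\sup_t (m_t - H(t,R))\le\varepsilon$, using $\sup_t m_t<\infty$.

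\textbf{Grid.} $V$ is uniformly continuous on the compact set $[0,T]\times[-R-1,R+1]$, and $m$ is bounded. Hence there are $\delta\in(0,1)$ and a grid $-R=y_0<y_1<\dots<y_K=R$ of mesh at most $\delta$ such that $|H(t,y)-H(t,y')|\le\varepsilon$ whenever $|y-y'|\le\delta$ and $y,y'\in[-R-1,R+1]$, for all $t\le T$.

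\textbf{Sandwich.} For each $j$, take continuous compactly supported $0\le\varphi_j^-\le\ind_{(-\infty,y_j]}\le\varphi_j^+\le 1$, where $\varphi_j^-=1$ on $[-R-1,y_j-\delta]$ and $\varphi_j^+=1$ on $[-R-1,y_j]$, with both supported in $[-R-2,y_j+\delta]$.

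For the upper bound, the contribution of the left tail $(-\infty,-R-1)$ is not captured by $\varphi^+_j$. We handle it through the mass instead. Let $\psi\in C_c$ equal $1$ on $[-R,R]$ with $0\le\psi\le1$. Then
\[
\xi^n_t\big((-\infty,-R)\cup(R,\infty)\big)\le m^n_t-\langle\xi^n_t,\psi\rangle ,
\]
and the right-hand side converges uniformly in $t$ to $m_t-\langle\xi_t,\psi\rangle\le 2\varepsilon$. So for large $n$ the tail mass of $\xi^n_t$ is at most $3\varepsilon$, uniformly in $t$.

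This gives, uniformly in $t$ for large $n$,
\[
H_n(t,y_j)\le \langle\xi^n_t,\varphi_j^+\rangle+3\varepsilon\le\langle\xi_t,\varphi_j^+\rangle+4\varepsilon\le H(t,y_j+\delta)+4\varepsilon\le H(t,y_j)+5\varepsilon .
\]
Symmetrically,
\[
H_n(t,y_j)\ge\langle\xi^n_t,\varphi_j^-\rangle\ge\langle\xi_t,\varphi_j^-\rangle-\varepsilon\ge H(t,y_j-\delta)-H(t,-R-1)-\varepsilon\ge H(t,y_j)-3\varepsilon .
\]
Only finitely many test functions are involved, so "large $n$" is uniform over $j$ and $t$.

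\textbf{Interpolation and tails.} For $y\in[y_j,y_{j+1}]$, monotonicity of $H_n$ and $H$ together with the $\varepsilon$-oscillation bound on cells gives $|H_n(t,y)-H(t,y)|\le 6\varepsilon$.

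For $y<-R$, both $H_n(t,y)$ and $H(t,y)$ are at most $3\varepsilon$.

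For $y>R$, write $H_n(t,y)=m^n_t-\xi^n_t((y,\infty))$. The mass term converges uniformly by hypothesis, and both right-tail masses are at most $3\varepsilon$.

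Letting $\varepsilon\downarrow0$ proves the lemma.

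\textbf{Main obstacle.} The delicate point is tail control uniform in time. The hypothesis only provides pairings with compactly supported functions plus the total mass. The remedy is to combine Dini's theorem for the limit with the mass convergence: writing tail mass as "mass minus $\langle\cdot,\psi\rangle$" transfers tightness of the limit to $\xi^n$ uniformly in $t$.
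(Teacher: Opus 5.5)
Your overall route is the paper's: Dini's theorem for uniform tails of the limit, uniform continuity of $H$ for a time-independent grid, finitely many compactly supported test functions combined with the mass hypothesis, and monotone interpolation. But the tail step, which you single out as the crux, has its inequality backwards. If $\psi=1$ on $[-R,R]$ and $0\le\psi\le 1$, then $m^n_t-\langle\xi^n_t,\psi\rangle=\langle\xi^n_t,1-\psi\rangle\le\xi^n_t\big([-R,R]^c\big)$, not $\ge$. For example, a unit atom at a point where $\psi=\tfrac12$ gives left side $1$ and right side $\tfrac12$. The \emph{mass minus pairing} trick only controls the mass off the support of $\psi$. So as written you have no bound on $\xi^n_t((-\infty,-R))$ or $\xi^n_t((R,\infty))$, and your upper grid bound and both far-tail cases ($y<-R$, $y>R$) rest on it. Relatedly, $\ind_{(-\infty,y_j]}\le\varphi_j^+$ is impossible for compactly supported $\varphi_j^+$; you mean $\ind_{[-R-1,y_j]}\le\varphi_j^+$.

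The repair is short. Take $\ind_{[-R,R]}\le\psi\le\ind_{[-R-1,R+1]}$. Then $\xi^n_t([-R-1,R+1]^c)\le m^n_t-\langle\xi^n_t,\psi\rangle$, and the right side converges uniformly in $t$ to $m_t-\langle\xi_t,\psi\rangle\le\xi_t([-R,R]^c)\le 2\varepsilon$. This is exactly what your upper bound at $y_j$ needs, since $\varphi_j^+$ misses only $(-\infty,-R-1)$.

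For $y<-R$ and $y>R$, use monotonicity from the endpoint grid bounds instead of tail masses. On the left, $0\le H_n(t,y)\le H_n(t,-R)\le H(t,-R)+5\varepsilon$. On the right, $H(t,R)-3\varepsilon\le H_n(t,y)\le m^n_t$, while $H(t,y)\in[m_t-\varepsilon,m_t]$ there.

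The paper avoids a separate tail estimate altogether. It takes $\ind_{[y_{j+1},R]}\le\varphi_j^+\le\ind_{[y_j,R+1]}$ and bounds $H_n(t,y_j)\le m^n_t-\langle\xi^n_t,\varphi_j^+\rangle$. This holds even with an atom at $y_j$, since continuity forces $\varphi_j^+(y_j)=0$. In this way the left tail of $\xi^n_t$ is absorbed into the mass automatically.
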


\begin{proof}
Fix $\varepsilon > 0$. We first truncate the domain: we choose $R$ so that, uniformly in time, the limit carries mass at most $\varepsilon$ outside $[-R, R]$, and then work on a finite grid inside the truncated window. The functions $t \mapsto H(t, -R)$ and $t \mapsto m_t - H(t, R)$ are continuous, nonincreasing in $R$, and vanish pointwise as $R \to \infty$, so by Dini's theorem (applied along integer $R$, using monotonicity in $R$) there is an $R$ with $\sup_{t \leq T} H(t, -R) < \varepsilon$ and $\sup_{t \leq T} \big( m_t - H(t, R) \big) < \varepsilon$; and $H$ is uniformly continuous on the compact set $[0,T] \times [-R-1, R+1]$, so there are points $-R = y_0 < \cdots < y_k = R$ with $\sup_{t \leq T} \big( H(t, y_{j+1}) - H(t, y_j) \big) < \varepsilon$ for every $j$. At each interior grid point, we squeeze from below and above with two continuous compactly supported functions,
\begin{equation*}
    \ind_{[-R,\, y_{j-1}]} \;\leq\; \varphi_j^- \;\leq\; \ind_{[-R-1,\, y_j]},
    \qquad
    \ind_{[y_{j+1},\, R]} \;\leq\; \varphi_j^+ \;\leq\; \ind_{[y_j,\, R+1]},
\end{equation*}
so that $\langle \xi^n_t, \varphi_j^- \rangle \leq H_n(t, y_j) \leq m^n_t - \langle \xi^n_t, \varphi_j^+ \rangle$, while $\langle \xi_t, \varphi_j^- \rangle \geq H(t, y_j) - 2\varepsilon$ and $m_t - \langle \xi_t, \varphi_j^+ \rangle \leq H(t, y_j) + 2\varepsilon$ uniformly in $t$, by the choice of $R$ and of the grid. The hypotheses, applied to the finitely many functions $\varphi_j^\pm$, therefore give
\begin{equation*}
    \limsup_{n}\, \max_{j}\, \sup_{t \leq T} \big| H_n(t, y_j) - H(t, y_j) \big| \;\leq\; 3\varepsilon
\end{equation*}
(at the two endpoint grid points the tails give the bound directly). Between grid points, monotonicity in $y$ sandwiches exactly as in Lemma~\ref{lem:polya}; below $-R$ and above $R$ the tails and the masses control both functions. Putting everything together yields $\limsup_n \sup_{t \leq T} \sup_y |H_n(t,y) - H(t,y)| \leq 5\varepsilon$, and we conclude since $\varepsilon$ was arbitrary.
\end{proof}

Our third and fourth preliminary results concern the limiting PDE itself, and are where the assumption $\min \sigma^2 > 0$ enters: uniqueness of CDF solutions, by a duality argument, and existence with regularity for Step 6. The proofs are collected in Appendix~\ref{app:pde-proofs}. The corresponding uniqueness statement for the possibly degenerate case without reaction term is \cite[Proposition 2.2]{jourdain_propagation_2013}.

\begin{lemma}[Uniqueness of CDF solutions]\label{lem:uniqueness}
Call $w$ an admissible solution of \eqref{eq:pde-general} on $[0,T]$ if $w(t, \cdot)$ is the CDF of a probability measure $\pi_t$ with finite first moment, $t \mapsto \pi_t$ is weakly continuous, $\int_0^T \!\!\int_\R |y|\, \pi_t(\d y)\, \d t < \infty$, and $w$ satisfies \eqref{eq:pde-general} in the sense of distributions. Two admissible solutions with the same initial CDF coincide.
\end{lemma}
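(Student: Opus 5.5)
The plan is to prove uniqueness by a duality (Holmgren-type) argument applied to the difference of two admissible solutions, with the equation linearized along the segment between them. Let $w_1, w_2$ be admissible with the same initial CDF and set $v = w_1 - w_2$. Since $\pi^i_t$ have finite first moments, $v(t,\cdot) \in L^1(\R)$ with $\int_\R |v(t,y)|\,\d y \le \int |y|\,(\pi^1_t + \pi^2_t)(\d y)$, and the time-integrability hypothesis gives $v \in L^1((0,T)\times\R)$; also $|v| \le 1$. Write
\begin{equation*}
A(w_1) - A(w_2) = a\, v, \qquad B(w_1) - B(w_2) = \beta\, v, \qquad \tilde f(w_1) - \tilde f(w_2) = c\, v,
\end{equation*}
where $a = \int_0^1 \tfrac{\sigma^2}{2}(w_2 + \theta v)\,\d\theta$, and $\beta, c$ are defined analogously from $\tilde b$ and $\tilde f'$. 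Then $a \ge \tfrac12 \min \sigma^2 > 0$, and $a, \beta, c$ are bounded measurable (because $\sigma^2, b, \lambda_b, \lambda_d \in C^1$). In the sense of distributions, $v$ solves the linear equation $\pp_t v = \pp^2_{yy}(a v) - \pp_y(\beta v) + c v$ with $v(0) = 0$; the initial condition is meaningful by weak continuity of $\pi_t$.

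For duality, fix a test function $\psi \in C_c^\infty((0,T)\times\R)$ and solve the backward adjoint problem
\begin{equation*}
\pp_t \phi + a\, \pp^2_{yy}\phi + \beta\, \pp_y \phi + c\, \phi = -\psi, \qquad \phi(T) = 0.
\end{equation*}
Pairing with $v$ would formally give $\iint v\,\psi = 0$, hence $v = 0$. The coefficients are only measurable, so I regularize. Mollify $a, \beta, c$ to obtain smooth $a_\varepsilon \ge \tfrac12\min\sigma^2$, $\beta_\varepsilon$, $c_\varepsilon$ with uniform bounds, and solve for smooth $\phi_\varepsilon$. Two ingredients are then needed:
\begin{itemize}
\item bounds on $\phi_\varepsilon$ in $L^\infty$ and in $L^1$ in space, uniform in $\varepsilon$, obtained from the maximum principle and the Feynman--Kac representation with bounded $c_\varepsilon$, together with decay in $|y|$ so that $\phi_\varepsilon$ may be paired against $v \in L^1$;
\item the energy estimate obtained by testing the adjoint equation with $\pp^2_{yy}\phi_\varepsilon$, which is available because the equation is uniformly parabolic:
\begin{equation*}
\iint a_\varepsilon |\pp^2_{yy}\phi_\varepsilon|^2 \le C,
\end{equation*}
with $C$ independent of $\varepsilon$.
\end{itemize}
Using $\phi_\varepsilon$ as a test function (after a standard cutoff in $y$, justified by the bounds and $v \in L^1 \cap L^\infty$) gives
\begin{equation*}
\iint v\,\psi = \iint v\big[(a - a_\varepsilon)\pp^2_{yy}\phi_\varepsilon + (\beta - \beta_\varepsilon)\pp_y\phi_\varepsilon + (c - c_\varepsilon)\phi_\varepsilon\big].
\end{equation*}

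The main obstacle is showing this error tends to zero. The first term is bounded by Cauchy--Schwarz:
\begin{equation*}
\|v\,(a - a_\varepsilon)/\sqrt{a_\varepsilon}\|_{L^2}\, \|\sqrt{a_\varepsilon}\,\pp^2_{yy}\phi_\varepsilon\|_{L^2}.
\end{equation*}
The second factor is uniformly bounded by the energy estimate. The first factor tends to zero by dominated convergence, since $v \in L^2$ (bounded and in $L^1$) and $a_\varepsilon \to a$ almost everywhere. The first-order terms are handled the same way, once the energy estimate also gives $\pp_y \phi_\varepsilon$ bounded in $L^2$. This is where $\min\sigma^2 > 0$ is essential; without it one would need the entropy methods of \cite{jourdain_propagation_2013}. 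Letting $\varepsilon \to 0$ gives $\iint v\psi = 0$ for all $\psi$, so $w_1 = w_2$.
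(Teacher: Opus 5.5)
Your proposal is correct and follows essentially the same route as the paper's proof. You linearize the difference via Hadamard's formula, solve a backward adjoint problem with mollified coefficients, and bound the adjoint solution uniformly in two ways: Feynman--Kac gives the sup bound with Gaussian decay, and testing with $\pp^2_{yy}$ of the solution gives the energy estimate. The commutator error is then removed by Cauchy--Schwarz and dominated convergence using $|v| \leq 1$ and $v \in L^1$. The only cosmetic difference is that the paper works on $[0,t]$ with $t<T$ and checks explicitly, via weak continuity, that the pairing identity holds at every time. Your remark that the initial condition is meaningful by weak continuity amounts to the same thing.
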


\begin{lemma}[Existence and regularity of the CDF solution]\label{lem:regularity}
Let $w_0(y) = u_0(e^y)$ be a continuous CDF with finite first moment. Then \eqref{eq:pde-general} has an admissible solution $w$ in the sense of Lemma~\ref{lem:uniqueness}, with $w$ and $\pp_y w$ continuous for $t > 0$. Moreover, $w(t, \cdot)$ is a continuous CDF for every $t$, and the map $t \mapsto w(t, \cdot)$ is continuous in the supremum norm; in particular $w$ is jointly continuous.
\end{lemma}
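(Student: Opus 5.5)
We will prove Lemma~\ref{lem:regularity} by working at the level of the density $\rho = \pp_y w$ and integrating back, rather than with the CDF directly. Differentiating \eqref{eq:pde-general} in $y$ gives the nonlocal-free but nonlinear equation
\begin{equation*}
    \pp_t \rho \;=\; \pp^2_{yy}\big(\tfrac{\sigma^2(w)}{2}\rho\big) - \pp_y\big(\tilde b(w)\rho\big) + \tilde f'(w)\,\rho ,
\end{equation*}
which is uniformly parabolic because $\min\sigma^2>0$.

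Our first step is to treat the CDF equation itself as a quasilinear parabolic equation in nondivergence form:
\begin{equation*}
    \pp_t w = \tfrac{\sigma^2(w)}{2}\pp^2_{yy}w + \big(\sigma^2)'(w)\tfrac12 (\pp_y w)^2 - \tilde b(w)\pp_y w + \tilde f(w).
\end{equation*}
The coefficients are $C^1$ in $w$ and the ellipticity is uniform. We will construct solutions by regularization. First we mollify $w_0$ to smooth CDFs $w_0^\varepsilon$, which are strictly increasing with $\pp_y w_0^\varepsilon$ bounded. We then obtain classical solutions $w^\varepsilon$ on $[0,T]$ from standard quasilinear theory (Ladyzhenskaya--Solonnikov--Ural'tseva). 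The $L^\infty$ bound $0\le w^\varepsilon\le 1$ comes from the comparison principle, since $0$ and $1$ are solutions because $\tilde f(0)=\tilde f(1)=0$. The gradient bound requires more care. Monotonicity is preserved: $\rho^\varepsilon=\pp_y w^\varepsilon$ solves the linear equation displayed above, with coefficients frozen along $w^\varepsilon$, so $\rho^\varepsilon\ge0$ by the maximum principle. Its total mass $\int\rho^\varepsilon\,\d y = w^\varepsilon(\infty)-w^\varepsilon(-\infty)$ stays $1$, because the limits at $\pm\infty$ are the stationary states $1$ and $0$.

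Next we derive $\varepsilon$-uniform estimates. There are three of them.
\begin{itemize}
    \item A first-moment bound. Testing the $\rho$-equation against $\sqrt{1+y^2}$ and using boundedness of $\tilde b$, $\sigma^2$, $\tilde f'$ together with Gronwall yields $\sup_{t\le T}\int|y|\rho^\varepsilon\,\d y\le C(1+\int|y|\,\d w_0)$.
    \item Interior H\"older estimates. For $t\ge\tau>0$ we get H\"older bounds on $w^\varepsilon$ and $\pp_y w^\varepsilon$ from Krylov--Safonov or De Giorgi--Nash for the divergence-form $\rho$-equation, with bounded measurable coefficients. Here we view $\rho$ as a weak solution and use that $\rho$ is a probability density, hence in $L^1$, and gain $L^\infty$ for $t\ge\tau$ by Nash/Moser.
    \item A modulus of continuity in time, uniform in $\varepsilon$. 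We obtain it from the weak form paired with test functions, combined with the moment bound.
\end{itemize}
Arzel\`a--Ascoli then gives a subsequence converging locally uniformly, with $\pp_y w^\varepsilon$ converging on $t>0$. The limit solves \eqref{eq:pde-general} in distributions, since $A$, $B$, $\tilde f$ are continuous, and it inherits the moment and weak-continuity properties. So it is admissible, and by Lemma~\ref{lem:uniqueness} it is the unique solution.

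The remaining claims are continuity statements. Each $w(t,\cdot)$ is a continuous CDF for $t>0$ because $\pp_y w$ is continuous and finite; at $t=0$ it is continuous by hypothesis. Continuity of $t\mapsto w(t,\cdot)$ in supremum norm follows from weak continuity of $\pi_t$ together with continuity of each limit CDF, by Lemma~\ref{lem:polya} applied along sequences $t_k\to t$. Joint continuity then follows.

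The main obstacle is continuity at $t=0$ in supremum norm uniformly in $\varepsilon$, since $w_0$ is merely continuous. We would handle it by a comparison argument. The spatial modulus of continuity of $w_0$ should propagate: compare $w^\varepsilon(t,y+h)$ with $w^\varepsilon(t,y)$, both of which solve the same autonomous equation. The translate is a solution, so by the comparison principle for the quasilinear equation, $\sup_y|w(t,y+h)-w(t,y)|\le\sup_y|w_0(y+h)-w_0(y)|$. Actually the reaction term can amplify this by a factor $e^{\|\tilde f'\|T}$, so we would prove an $L^\infty$-contraction estimate up to that Gronwall factor. Combined with the weak-in-time continuity, this gives the uniform-in-time equicontinuity needed.
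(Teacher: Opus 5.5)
Your architecture is essentially the paper's: Ladyzhenskaya--Solonnikov--Ural'tseva (LSU) theory for a regularized problem, with interior estimates uniform in the regularization parameter; comparison with the constant solutions $0$ and $1$; the first moment via a truncation of $|y|$ and Gronwall; weak continuity from the weak form; and Lemma~\ref{lem:polya} along $t_k \to t$. The paper regularizes differently, by Dirichlet problems on $[-R,R]$ (corner compatibility from $\tilde f(0)=\tilde f(1)=0$) followed by a diagonal extraction. Mollifying the data on the whole line instead is immaterial.

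The interior-estimate step fails as you state it. The $\rho$-equation is in double-divergence form. In divergence form it reads $\pp_t\rho = \pp_y\big(a(w)\pp_y\rho + a'(w)\rho^2 - \tilde b(w)\rho\big) + \tilde f'(w)\rho$ with $a = \sigma^2/2$. Its drift coefficient $a'(w)\rho$ is not bounded a priori, so Nash--Moser from $L^1$ is circular. Moreover, double-divergence equations with merely measurable coefficients have no H\"older theory: in one dimension $(a\rho)''=0$ is solved by $\rho = 1/a$. Take the estimates from the $w$-equation in divergence form, $\pp_t w = \pp_y\big(a(w)\pp_y w\big) - \tilde b(w)\pp_y w + \tilde f(w)$, which does have bounded measurable coefficients. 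De Giorgi--Nash makes $w$ H\"older, hence $a(w)$ is H\"older. LSU Chapter~V then bounds $w$ and $\pp_y w$ in H\"older norm on $t \geq \tau$ in terms of the structure constants and $0 \le w \le 1$ alone, which is the uniformity the paper invokes.

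Work in divergence form throughout. With $\sigma^2 \in C^1$, the coefficient $(\sigma^2)'(w)$ in your nondivergence form is merely continuous. Likewise, a classical maximum principle for $\rho$ would need $\pp^2_{yy}\sigma^2(w)$, i.e.\ $\sigma^2 \in C^2$. The paper gets monotonicity by comparing $w(t,\cdot+h)$ with $w(t,\cdot)$, which needs only the comparison principle.

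Your last paragraph addresses a non-obstacle with an incorrect argument. Uniform convergence of $w^\varepsilon$ near $t=0$ is never needed:
\begin{itemize}
\item the datum enters the distributional formulation through $\int w_0^\varepsilon\,\varphi(0,\cdot)\,\d y$;
\item your $\varepsilon$-uniform time modulus of the pairings gives weak continuity of the limit at $t=0$;
\item Lemma~\ref{lem:polya}, with the continuity of $w_0$, then gives sup-norm continuity at $t=0$ exactly as at $t>0$.
\end{itemize}
This is precisely the paper's argument.

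The estimate you propose is also false. Comparison yields order preservation, not $L^\infty$-Lipschitz dependence on the data, because $w+\delta$ is not a solution once $\sigma^2$ or $\tilde b$ depends on rank. For instance, take $\sigma^2 \equiv 2\nu$, $\tilde b(q) = -q$ and $\lambda_b = \lambda_d = 0$. This gives the compressive viscous Burgers equation $\pp_t w - w\,\pp_y w = \nu\,\pp^2_{yy} w$. Its CDF solutions converge, from arbitrarily flat smooth data, to a translate of the front $\nu V' = V(1-V)/2$, whose maximal slope is $1/(8\nu)$. So $\sup_y |w(t,y+h)-w(t,y)|$ eventually exceeds $\sup_y|w_0(y+h)-w_0(y)|$, even though the Gronwall factor is $1$. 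Drop that paragraph. With the interior estimates taken from the $w$-equation, your proof goes through.
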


Everything is in place to proceed with the proof of the theorem. We note that the two halves of \eqref{eq:unifconv} are proved in different places: the mass at the end of Step 2, and the shape part in Step 6.

\begin{proof}[Proof of Theorem~\ref{thm:mean-field}]

\textbf{Step 1: Semimartingale Decomposition.}
For a test function $\varphi \in C^2_c(\R)$, the process $t \mapsto \langle \nu^n_t, \varphi\rangle$ moves in two ways: continuously, as the particles diffuse, and by jumps of size $\pm\varphi(Y^i_t)/n$ at each birth or death. Compensating each jump by its intensity produces the reaction integral below, and what remains (Brownian fluctuations plus compensated jumps) is a martingale:
\begin{equation}\label{eq:semimartingale}
    \langle \nu^n_t, \varphi \rangle \;=\; \langle \nu^n_0, \varphi \rangle
    + \int_0^t \big\langle \nu^n_s, \mathcal{A}_{G^n_s/m^n_s}\varphi \big\rangle \,\d s
    + \int_0^t \big\langle \nu^n_s, \big(\lambda_b - \lambda_d\big)\big(G^n_s/m^n_s\big)\,\varphi \big\rangle \,\d s
    + M^n_t(\varphi),
\end{equation}
where $\mathcal{A}_{H}\varphi(y) = \frac{\sigma^2(H(y))}{2}\varphi''(y) + \tilde b(H(y))\,\varphi'(y)$ for a rank profile $H$, and $M^n(\varphi)$ is a square-integrable martingale. Compositions such as $(\lambda_b - \lambda_d)(G^n_s/m^n_s)$ abbreviate the function $y \mapsto \lambda_b(G^n_s(y)/m^n_s) - \lambda_d(G^n_s(y)/m^n_s)$. This is the usual construction; see \cite{graham_asymptotic_1996} for the general measure-valued framework and \cite{shkolnikov_large_2012, jourdain_propagation_2013} for the rank-dependent part without births/deaths.

\paragraph{Step 2: Moment and Martingale Bounds.}
Each particle's total event rate is at most the constant $\Lambda = \sup\lambda_b + \sup\lambda_d$, so the population can be coupled with a pure birth process (also known as a Yule process) $N_t$ of per-individual rate $\Lambda$ started from $N_0 = n$. Give each individual of the comparison process an independent Poisson clock of rate $\Lambda$, let it branch at every ring, match each living particle of our system injectively to a compared individual, and let a matched particle give birth only at rings of its partner's clock, accepting each ring with probability $\lambda_b(q_i)/\Lambda \leq 1$; deaths remove particles from our system only. The coupling maintains $n_t \leq N_t$, and the birth process is finite for all times with $\E[N_t] = n e^{\Lambda t}$ \cite[Section 2.5]{norris_markov_1997}. In particular the particle system does not explode, and $\E[m^n_t] \leq e^{\Lambda t}$.

Next we prove a moment bound, used in the cutoff argument below and again in Steps 3 through 5. Apply \eqref{eq:semimartingale} to smooth compactly supported $\psi_K \uparrow |y|$ with $|\psi_K'| \leq 1$ and $\sup_K \|\psi_K''\|_\infty < \infty$. The diffusive term is bounded by $C\, m^n_s$, since the coefficients are bounded. The reaction term is bounded by $\Lambda\, \langle \nu^n_s, \psi_K \rangle$, since a birth duplicates the position of a living particle. Taking expectations removes the martingale, and Gronwall's inequality gives $\sup_{t \leq T} \E\langle \nu^n_t, \psi_K \rangle \leq C(T)\big(1 + \E|\log X^1_0|\big)$, uniformly in $K$. Monotone convergence in $K$ then yields $\sup_{t \leq T} \E\langle \nu^n_t, |y| \rangle < \infty$.

The quadratic variation of the martingale part is small because every jump is small: births and deaths arrive one particle at a time, so each event moves $\langle \nu^n, \varphi \rangle$ by at most $\|\varphi\|_\infty / n$ and arrives at rate at most $\Lambda$ per living particle, while each particle's Brownian motion contributes to $\langle \nu^n, \varphi \rangle$ at the rate $\sigma^2(q_i)\,\varphi'(Y^i)^2/n^2$. Summing the two contributions over the at most $N_t$ living particles and using $\E[N_t] = n e^{\Lambda t}$ gives
\begin{equation*}
    \E\big[\langle M^n(\varphi) \rangle_t\big] \;\leq\; \frac{C(\varphi, T)}{n}, \qquad t \leq T.
\end{equation*}
Doob's inequality then gives $\sup_{t \leq T} |M^n_t(\varphi)| \to 0$ in $L^2$. To pass \eqref{eq:semimartingale} to $\varphi \equiv 1$, which is not compactly supported, take cutoffs $\varphi_R$ with $\varphi_R = 1$ on $[-R, R]$, $0 \leq \varphi_R \leq 1$, and $|\varphi_R'| + |\varphi_R''| \leq C/R$; the moment bound above shows the diffusive terms vanish as $R \to \infty$, while ranks are unaffected.

In the limit $R \to \infty$, since the ranks $G^n/m^n$ sweep the uniform grid, the reaction term reduces to $\int_0^t m^n_s f(1)\,\d s$ up to an error of order $t/n$. To make things explicit: the reaction integrand is the Riemann sum $\frac{1}{n} \sum_{k=1}^{n_s} (\lambda_b - \lambda_d)(k/n_s) = m^n_s\, f(1) + O(1/n)$, where the error is $O(1/n)$ because $\lambda_b - \lambda_d$ is Lipschitz and $m^n_s / n_s = 1/n$. With $\varphi \equiv 1$, \eqref{eq:semimartingale} therefore reads $m^n_t = 1 + f(1) \int_0^t m^n_s \,\d s + O(t/n) + M^n_t(1)$. The limit mass $m(t) = e^{f(1)t}$ satisfies the same equation with no error and no martingale, so subtracting the two equations and applying Gronwall's inequality gives
\begin{equation*}
    \sup_{t \leq T} \big| m^n_t - m(t) \big| \;\leq\; \Big( \frac{C T}{n} + \sup_{t \leq T} \big| M^n_t(1) \big| \Big)\, e^{|f(1)|\, T} \;\longrightarrow\; 0 \quad \text{in probability},
\end{equation*}
the mass part of \eqref{eq:unifconv}. The convergence in probability holds since the bound is pathwise: the term $CT/n$ is deterministic, and the martingale supremum vanishes in $L^2$ and hence in probability.

\paragraph{Step 3: Tightness.}
Tightness of the laws of $(\nu^n)_n$ in the Skorokhod space of measure-valued paths follows from the bounds of Step 2 and the Aldous--Rebolledo criterion, following the pattern of \cite[Section 4]{graham_asymptotic_1996}: the mass bound $\E[\sup_{t \leq T} m^n_t] \leq e^{\Lambda T}$ from the Yule coupling controls the total masses, the moment bound gives compact containment, and the drift and reaction integrands in \eqref{eq:semimartingale} are bounded on bounded time intervals while the martingale part vanishes. Hence the laws are tight in the weak topology and no mass escapes to infinity.

\paragraph{Step 4: Passing to the Limit.}
Along a convergent subsequence $\nu^n \Rightarrow \nu$, we must show the nonlinear term in \eqref{eq:semimartingale} converges to the same expression with $G^n_s/m^n_s$ replaced by its limit. The composition $\lambda(G^n)$ is not continuous in the topology of weak convergence, so we will move the composition inside an integral, where pointwise convergence suffices. For a continuous function $h$ on $[0,1]$ and a mass $m > 0$, write $H_m(q) = \int_0^q h(r/m)\, \d r$. The three instances we need are $h = \sigma^2/2$, $h = \tilde b$, and $h = \lambda_b - \lambda_d$; and we write $A_m$, $B_m$, and $m f(\cdot / m)$ for the corresponding $H_m$.

We make two observations: the first rewrites each pairing at the particle level, and the second passes the rewritten form to the limit. \emph{First observation.} At the particle level each pairing is a Stieltjes integral up to $O(1/n)$: for $\psi \in C^1_c(\R)$,
\begin{equation}\label{eq:stieltjes}
    \big\langle \nu^n_s,\, h\big(G^n_s/m^n_s\big)\, \psi \big\rangle
    \;=\; \int_\R \psi(y)\, \d_y \big[ H_{m^n_s}\big(G^n_s(y)\big) \big] + O(1/n)
    \;=\; -\int_\R \psi'(y)\, H_{m^n_s}\big(G^n_s(y)\big)\, \d y + O(1/n),
\end{equation}
because the function $y \mapsto H_{m^n_s}(G^n_s(y))$ jumps by $\frac{1}{n}\, h(\tilde q_i)$ at the $i$-th particle, for some $\tilde q_i$ within $1/n_s$ of the rank $q_i(s)$, so the total error is at most $\|\psi\|_\infty\, \mathrm{Lip}(h)\, (n_s/n)(1/n_s) = \|\psi\|_\infty\, \mathrm{Lip}(h)/n$: the same cancellation as in Step 2. This moves the composition out of the measure and into a bounded integrand.

\emph{Second observation.} The right side of \eqref{eq:stieltjes} is stable under weak convergence, even in the presence of atoms. To argue pointwise along the subsequence, we use Skorokhod's representation theorem \cite[Theorem 3.2.2]{whitt_stochastic-process_2002}: on a new probability space, versions of the paths converge almost surely in the Skorokhod topology. We may take a further subsequence so that the mass convergence of Step 2 also holds almost surely (the masses are functionals of the paths). Along the representation, $\sup_n \sup_{s \leq T} m^n_s$ is bounded, and almost surely the marginals converge at every continuity time of the limit path, a co-countable set. At almost every $s$, we have $\nu^n_s \Rightarrow \nu_s$, so $G^n_s(y) \to G_s(y)$ at every continuity point of $G_s$, hence at almost every $y$; $m^n_s \to m_s \geq e^{-\Lambda T} > 0$ by Step 2; and $H_m(q)$ is bounded and jointly continuous in $(m, q)$. So $H_{m^n_s}(G^n_s(y)) \to H_{m_s}(G_s(y))$ for almost every $y$ and almost every $s$, and dominated convergence passes the right side of \eqref{eq:stieltjes} to the limit. Combining the two observations, with $\psi = \varphi'', \varphi', \varphi$ for $\varphi \in C^3_c(\R)$, passes every nonlinear term in \eqref{eq:semimartingale} to the limit. No continuity of the limit is required.

\paragraph{Step 5: Identification and Uniqueness.}
Almost surely along the representation of Step 4, every limit point satisfies the weak form of
\begin{equation}\label{eq:log-pde}
    \pp_t v \;=\; \pp^2_{yy} A_{m_t}(v) - \pp_y B_{m_t}(v) + m_t\, f(v / m_t), \qquad v(0,\cdot) = u_0(\exp(\cdot)),
\end{equation}
for the limiting scaled CDF $v(t,y) = G_t(y)$, with $A_m$ and $B_m$ as in Step 4. Substituting $v = m w$ with $\dot m = f(1)m$ and using the homogeneity $A_m(m w) = m A(w)$, $B_m(m w) = m B(w)$ shows that the normalized function $w = v/m$ satisfies exactly \eqref{eq:pde-general}. Any normalized limit point is admissible in the sense of Lemma~\ref{lem:uniqueness}: its marginals are probability measures, weakly continuous in $t$ (their action on test functions is a time integral of bounded integrands), and the moment condition holds by the moment bound of Step 2 and Fatou's lemma along the marginal convergence, at almost every $t$ directly and at every $t$ by weak continuity, since $\pi \mapsto \langle \pi, |y| \wedge K \rangle$ is weakly continuous and monotone in $K$. Lemma~\ref{lem:uniqueness} then identifies every limit point with the solution $w$ of Lemma~\ref{lem:regularity}. Subsequential convergence upgrades to convergence of the full sequence: $\nu^n \Rightarrow \nu$ in the Skorokhod space, where $\nu$ is the path of measures with scaled CDFs $H(t, \cdot) := m(t)\, w(t, \cdot)$.

\paragraph{Step 6: Uniform Convergence.}
It remains to upgrade the topology to uniform convergence. By Lemma~\ref{lem:regularity}, $w$ is jointly continuous and $w(t, \cdot)$ is a continuous CDF for every $t$, so the limit $H(t,y) = m(t)\, w(t,y)$ satisfies the continuity hypotheses of Lemma~\ref{lem:polya-uniform}. Since the limit is deterministic, convergence in probability of the supremum $S_n := \sup_{t \leq T} \sup_y |G^n(t,y) - H(t,y)|$ follows if every subsequence admits a further subsequence along which versions of $S_n$ converge to $0$ almost surely; and $S_n$ is a functional of the path $\nu^n$, so a Skorokhod representation does not change its law. Fix a subsequence and apply a Skorokhod representation with the mass refinement, exactly as in Step 4; the limit is now the deterministic path $\nu$ of Step 5. Almost surely, then, Skorokhod convergence to the time-continuous path $\nu$ is uniform in the metric of weak convergence; a countable family of test functions metrizes that topology on sets of bounded mass, and the masses are bounded along the refined representation, so the uniformity extends to $\sup_{t \leq T} |\langle \nu^n_t, \varphi \rangle - \langle \nu_t, \varphi \rangle| \to 0$ for every $\varphi \in C_c(\R)$, and Lemma~\ref{lem:polya-uniform} yields
\begin{equation*}
    \sup_{t \leq T}\, \sup_{y \in \R}\, \big| G^n(t,y) - m(t)\, w(t,y) \big| \;\longrightarrow\; 0,
\end{equation*}
which is the statement of the theorem, in probability, after undoing the substitution $y = \log x$.
\end{proof}

\begin{remark}[Bounded intensities]\label{rem:bounded}
The proof of Theorem \ref{thm:mean-field} uses $\sup \lambda_b + \sup \lambda_d < \infty$ in two places: the Yule coupling of Step 2, which dominates the population by a pure birth process, and the quadratic-variation bound of the same step, where every jump is of size at most $\|\varphi\|_\infty/n$ and arrives at a bounded rate. This is why we require the fitted intensities in Section~\ref{sec:empirics} to be bounded functions of rank, and it is what rules out the otherwise convenient and parsimonious Beta distributions for $\lambda_b$ and $\lambda_d$. As for regularity, the proof uses only the Lipschitz property of $\lambda_b - \lambda_d$, in the Riemann-sum error of Step 2 and the Stieltjes error of Step 4.
\end{remark}

\begin{remark}[Degenerate diffusions]\label{rem:atlas}
The uniform ellipticity assumption $\min \sigma^2 > 0$ keeps every step of the proof inside standard uniformly parabolic theory. Empirically, the volatility curve measured from returns is bounded well away from zero at every rank (see Subsection~\ref{subsec:ranked-vol}). The assumption is economically necessary: in the finite system, a firm's rank is constant between crossings, so a rank with zero volatility would make its capitalization locally deterministic; unless its growth rate equals the risk-free rate, this is an arbitrage against the money market. We note that the degenerate variants in the rank-based literature place the degeneracy in the limiting equation rather than in any single firm's volatility: in the porous-medium limits of \cite{shkolnikov_large_2012, jourdain_propagation_2013} the diffusivity of the limit may vanish, and volatility-stabilized markets, in which small-firm volatility blows up rather than vanishes, admit an explicit degenerate mean-field limit \cite{shkolnikov_large_2013}. Within our proof, ellipticity enters in two places: the uniqueness of CDF solutions with a source term (Lemma~\ref{lem:uniqueness}) and the regularity of the limit (Lemma~\ref{lem:regularity}). The identification of Step 4 survives degeneracy (the rewriting via $A_m$, $B_m$ uses only continuity and boundedness of the coefficients), so in a degenerate variant Steps 1 through 5 would still give a law of large numbers in the weak topology, given a substitute proof of uniqueness; only the uniform statement needs the regularity of the limit. In the drift-stabilized models the analogous regularity is assumed or inherited from stationary initial spacings \cite[Theorem 1.2]{shkolnikov_large_2012}, and degenerate diffusion without reaction requires the delicate duality argument of \cite[Proposition 2.2 and Appendix A]{jourdain_propagation_2013}.
\end{remark}

\section{Entry and Exit in CRSP Data}
\label{sec:empirics}

We now estimate the two intensity functions from historical data. The takeaway from this section is that both intensities are strongly rank-dependent: exit intensity is an order of magnitude higher for the smallest firms than in the middle of the distribution, and entering firms arrive predominantly in the lower half.

\subsection{Data Construction}
\label{subsec:data}

We use the CRSP Monthly Stock File from \SampleFirst{} to \SampleLast{}, obtained through Wharton Research Data Services, together with the CRSP names, delisting, and distribution event files. We restrict the universe to common shares (share codes 10 and 11) listed on the NYSE, AMEX, or NASDAQ (exchange codes 1--3), which removes exchange-traded funds, closed-end funds, American depositary receipts, and similar securities whose listings and delistings are not firm entries or exits. Because a firm may list several share classes, we aggregate market capitalization (price times shares outstanding) to the firm level using CRSP's permanent company identifier (\emph{permco}). This leaves \NFirms{} firms and \NFirmMonths{} firm-months, on average \AvgFirms{} listed firms per month. We note that CRSP covers only the NYSE before its two universe expansions: AMEX enters the file in July 1962 and NASDAQ in December 1972, each bringing thousands of already-existing firms at once. The early decades of the sample therefore describe a narrower market, and Subsection~\ref{subsec:growth-acct} separates this coverage growth from economic entry.

Each month, we rank firms by market capitalization and record each firm's rank percentile $q \in (0, 1]$, with $q = 1$ the largest firm. We call a month's roster of firms and their capitalizations that month's \emph{cross-section}. Our convention matches the orientation of the CDF in Theorem~\ref{thm:mean-field}: $q$ is the fraction of firms with capitalization at most one's own. Ranks are uniformly distributed across firms within each month by construction, which is what allows us to read the intensity $\lambda(q)$ directly off the rank distribution of events: with ranks uniform within every cross-section, the density of event ranks at $q$ is proportional to the per-firm intensity at $q$.

\paragraph{Exits.} We take exits from the CRSP delisting file, which records a delisting code for every security that stops trading. Codes 200--299 are mergers and acquisitions, 400--499 liquidations, and 500--599 delistings for cause (``dropped'' means bankruptcy, failure to meet listing requirements, or similar). We date a firm's exit at its last month with a valid price and rank it by its percentile in that month. Delisting codes 300--399 (exchanges of securities) and delistings that carry a successor identifier are reorganizations rather than exits (the firm continues under a new name or identifier) so we remove them, together with the paired ``entrance'' of the successor identifier; this removes \NTechDropped{} spurious events. Working from delisting codes rather than from the raw first and last dates of each data series is important since about \PctExitRawTechnical\% of the apparent exits at the ends of the raw series are not economic exits. On delisting-related biases in CRSP see also \cite{shumway_delisting_1997}, and the course notes with accompanying GitHub repository \cite{ruf_introduction_nodate}.

\paragraph{Entrances.} A firm enters when its permco first appears in the filtered universe and we rank it by its percentile in that first month. We flag as spin-offs entrants whose security was distributed to shareholders of an existing firm (CRSP distribution codes 37xx with an acquiring \emph{permno}, the security-level identifier distinct from the firm-level permco), \NEntSpin{} of \NEnt{} entrances. We drop the first and last month of the sample, where beginnings and ends of data series are artifacts of the sample window, and the two universe expansion months.

The cleaned sample contains \NEnt{} entrances and \NExit{} exits (Table~\ref{tab:events}). The  difference is the net growth of the listed universe, about \NetGrowthAnnualPct\% per year on average. In the model this imbalance is exactly the mass growth rate $f(1) = r_b - r_d$ of Theorem~\ref{thm:mean-field}, so no balancing adjustment is needed.

\begin{table}[t]
\centering
\caption{Cleaned entry and exit events, CRSP \SampleFirstYear--\SampleLastYear.}
\label{tab:events}
\begin{tabular}{llr}
\toprule
Event & Class & Count \\
\midrule
Entrance & new listing & \NEntNew \\
         & spin-off    & \NEntSpin \\
\midrule
Exit & dropped (bankruptcy, delisted for cause) & \NExitDropped \\
     & merger or acquisition                    & \NExitMerger \\
     & liquidation                              & \NExitLiq \\
\bottomrule
\end{tabular}
\end{table}

\paragraph{Mergers.}
Mergers are \PctExitMerger\% of all true exits, so how they are treated matters. We count the merger target as a death: the firm genuinely leaves the population, which is the object our model tracks. What the binary birth/death model does not capture is that the target's capital moves to the acquirer rather than vanishing; in the mean-field limit this would add a transport term to \eqref{eq:pde}. We return to this in Section~\ref{sec:discussion}, and we check below that the shape of the death intensity is not driven by the merger events alone.

\subsection{Fitting the Intensities}
\label{subsec:fitting}

From here on, the estimation window is 1975--2024, the half-century after both universe expansions, so that the sample has stable coverage; the full century returns as a robustness check in Subsection~\ref{subsec:reaction}. Figure~\ref{fig:intensity-fit} shows the rank distributions. Entrances are spread over the lower and middle ranks, with a peak near the 25th percentile: firms rarely enter at the smallest sizes, since a listing requires a minimum scale, and rarely enter at the largest. Exits concentrate heavily at the bottom: \PctExitBottomFive\% of all exits occur in the lowest 5\% of ranks. Figure~\ref{fig:exit-decomp} splits the exits by delisting reason. Delistings for cause dominate at the very bottom, while mergers are spread across the whole rank range. Conditional on a mid-sized or large firm exiting, the exit is almost always a merger. The bottom-heavy shape of the total exit intensity is therefore not a merger artifact.

\begin{figure}[ht]
\centering
\includegraphics[width=\textwidth]{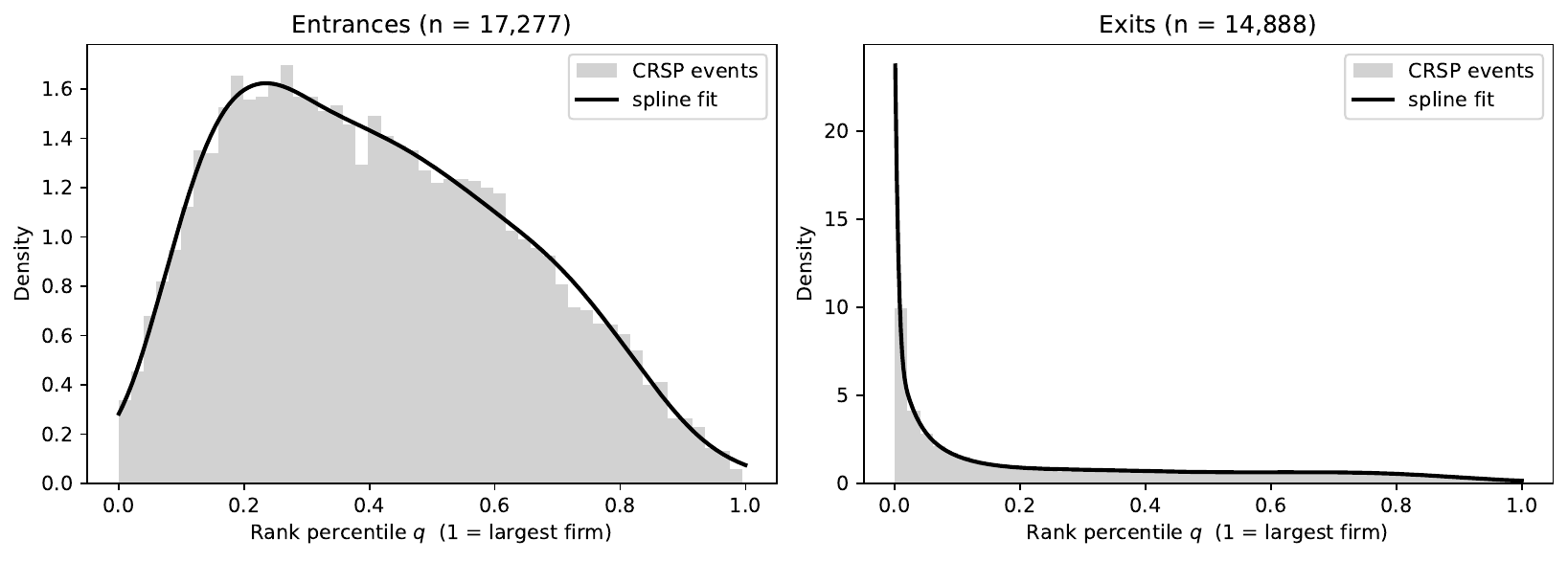}
\caption{Rank distributions of entrance (left) and exit (right) events on the window 1975--2024, with the fitted spline intensities of Subsection~\ref{subsec:fitting}. Rank percentile $q = 1$ is the largest firm.}
\label{fig:intensity-fit}
\end{figure}

\begin{figure}[t]
\centering
\includegraphics[width=\textwidth]{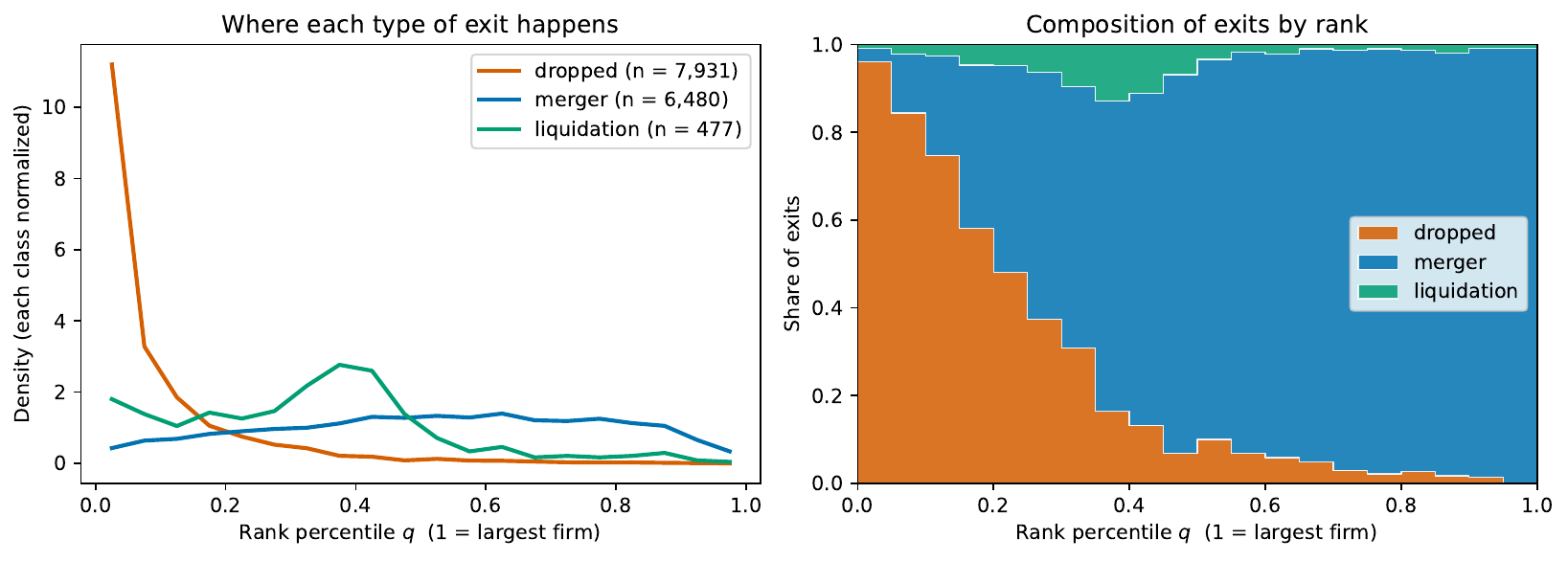}
\caption{Exit events by delisting reason. Left: the rank distribution of each exit type, normalized separately so that shapes can be compared (liquidations are only \NExitLiqWin{} events). Right: the composition of exits across ranks. Failures concentrate at the bottom; among the exits of mid-sized and large firms, mergers dominate.}
\label{fig:exit-decomp}
\end{figure}

Theorem~\ref{thm:mean-field} requires the intensities to be bounded Lipschitz functions of rank, and the wave theory of Section~\ref{sec:relaxation} asks for continuity of $\lambda_b - \lambda_d$ at the endpoints (Remark~\ref{rem:bounded}), so we estimate them by a Poisson regression with a smooth log-intensity rather than by a parametric density family. Our first attempt used Beta densities, which fit the entrance data well but are unbounded at the boundary for the exit data, violating the assumptions of the theorem.

We partition $(0,1]$ into $J = \NBins$ equal bins, count the events in each bin, and model the counts as Poisson with log-mean
\begin{equation*}
    \log \mu_j \;=\; \boldsymbol{\beta}^\top \mathbf{B}(q_j) + \log(N/J),
\end{equation*}
where $\mathbf{B}$ is a natural cubic spline basis in the rank $q_j$ of bin $j$, and the offset $\log(N/J)$ is the expected count under a uniform intensity, so that the coefficients capture only the shape. Because the exit events are so concentrated near $q = 0$, we place the spline knots at quantiles of the observed event ranks rather than uniformly; the number of basis functions (\DfSplineEnt{} for entrances, \DfSplineExit{} for exits) is chosen by Akaike's information criterion (AIC, see \cite{akaike_new_1974}), searched over four to eight degrees of freedom. The fitted intensity $\exp(\boldsymbol{\hat\beta}^\top \mathbf{B}(q))$, scaled to integrate to the aggregate rates $r_b = \RateBirthMonthly \times 10^{-3}$ and $r_d = \RateDeathMonthly \times 10^{-3}$ per firm-month, is bounded and $C^1$ on $[0,1]$ by construction, as Theorem~\ref{thm:mean-field} requires. Rates are monthly throughout the analysis, and we multiply by twelve to annualize.

Three checks support the specification. First, overdispersion: we refit both models as negative binomial with a free dispersion parameter \cite[Chapter 3]{cameron_regression_2013}, and the estimated dispersion is essentially zero (entrance $\hat\alpha = \NBAlphaEnt$, exit $\hat\alpha = \NBAlphaExit$; Pearson $\chi^2/\mathrm{df}$ of \PoisChiDfEnt{} and \PoisChiDfExit{}), so the Poisson likelihood is adequate. Second, fit: the maximum discrepancy between the empirical CDF of event ranks and the fitted model is \NBKSEnt{} for entrances and \NBKSExit{} for exits: the fitted curves in Figure~\ref{fig:intensity-fit} track the data closely, including the sharp rise of the exit intensity at the bottom. Third, mergers: refitting on exits excluding merger targets leaves the bottom-heavy shape unchanged, with \PctExitBottomFiveOrg\% of the remaining exits in the bottom five percent of ranks, against \PctExitBottomFive\% of all exits. Mergers lift the intensity mainly in the middle ranks: the median merger target exits at rank \MedRankMerger, the median delisting for cause at \MedRankDrop.

\subsection{The Calibrated Reaction Term}
\label{subsec:reaction}

From the fitted intensities we form $f(q) = \int_0^q (\lambda_b - \lambda_d)$ and the tilted version $\tilde f(q) = f(q) - q f(1)$ that governs the normalized distribution in Theorem~\ref{thm:mean-field}. Figure~\ref{fig:reaction} shows both. The shape of $\tilde f$ is the central empirical finding of the paper: it is negative on $(0, \FCross)$ and positive on $(\FCross, 1)$, with $\tilde f(0) = \tilde f(1) = 0$ and strictly negative slope at both endpoints. Among the smallest firms, deaths outpace births, and mass leaves the bottom of the distribution; in the upper-middle ranks, entries (and the dilution pushed down from above) outpace exits. In the language of reaction-diffusion equations, $\tilde f$ is a \emph{bistable} nonlinearity with stable states $0$ and $1$ and unstable interior equilibrium at $q \approx \FCross$. The archetypical bistable situation is given by Nagumo's equation which has $\tilde{f}$ cubic (see \cite{nagumo_active_1962, mckean_nagumos_1970} and Example~\ref{ex:nagumo} below). Both our equation and Nagumo's admit traveling-wave solutions, which we demonstrate rigorously in Section~\ref{sec:relaxation}, and then apply to study the empirical capital distribution curve in Sections~\ref{sec:wave} and \ref{sec:diversity}.

\begin{figure}[t]
\centering
\includegraphics[width=0.72\textwidth]{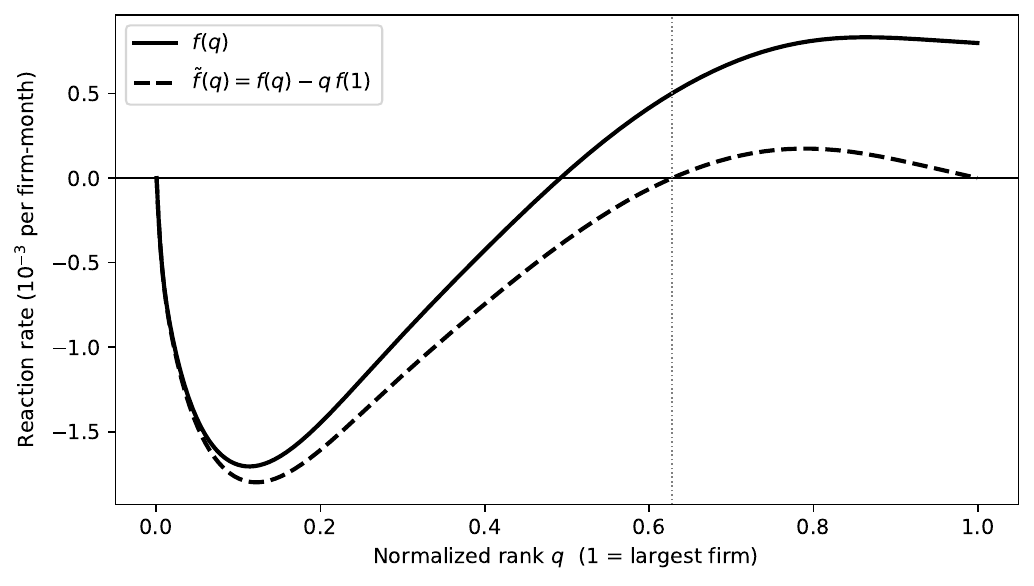}
\caption{The calibrated reaction terms $f$ and $\tilde f = f - q f(1)$, fitted on 1975--2024. The tilted term $\tilde f$ is a bistable nonlinearity: it vanishes at $0$ and $1$ and crosses zero once at $q \approx \FCross$.}
\label{fig:reaction}
\end{figure}

The above calibration pools the post-expansion half-century. Figure~\ref{fig:f-decades} repeats the construction of $\tilde f$ on individual decades. The bistable shape of $\tilde f$ recurs across eras, with one exception: in the 2000s, exits (\TwoThousandsDeathRate\% per year) outpaced entries (\TwoThousandsBirthRate\%) and $\tilde f$ fell below zero at essentially every rank. The levels of activity vary by up to an order of magnitude: decade-average entry rates (that is, $\int_0^1 \lambda_b$) range from \DecBirthRateMin\% to \DecBirthRateMax\% of firms per year, and exit rates from \DecDeathRateMin\% to \DecDeathRateMax\%. The bistable shape is robust to how the century is divided: across ten-year windows starting in 1925 or in 1930, seventeen of nineteen windows show the bistable sign pattern, the two exceptions being the windows containing the early 2000s.
This factorization with stable shapes and strongly time-varying levels motivates the regime-switching extension discussed in Section~\ref{sec:discussion}.
The decade without bistability (the 2000s) leads to a \emph{monostable} reaction-diffusion equation. With $\tilde f \leq 0$ on $(0,1)$ and zeros only at the endpoints, the state $w = 1$ turns unstable while $w = 0$ stays stable. Subsection~\ref{subsec:beyond} records the similarities and differences from the theory of the bistable case.

\begin{figure}[htbp]
\centering
\includegraphics[width=0.8\textwidth]{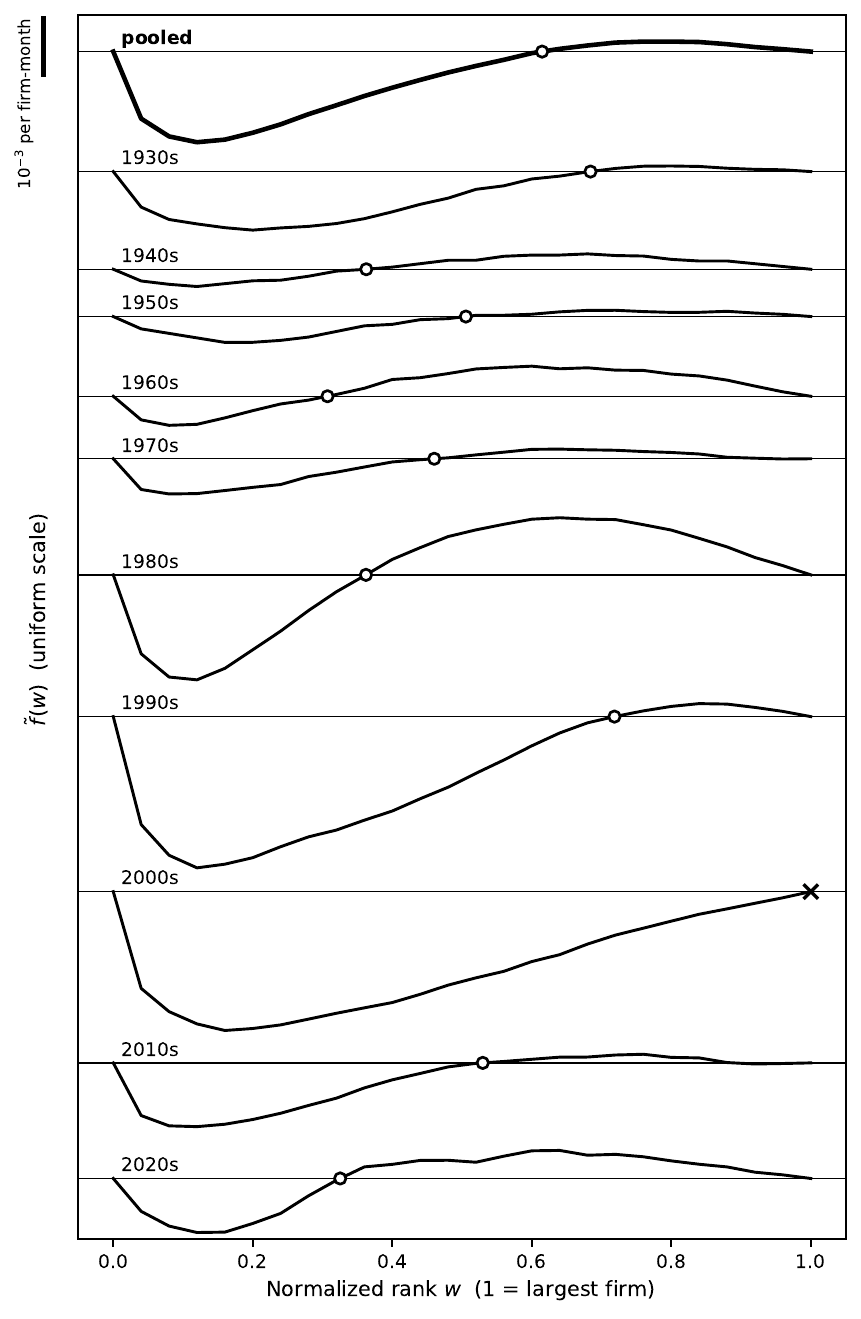}
\caption{The reaction term $\tilde f$ recomputed on each decade (binned estimates; pooled century in bold). Circles mark each decade's interior zero on its axis; the cross marks the 2000s, which has no interior crossing. The bistable shape recurs at varying amplitude; the 2000s lost bistability altogether, with $\tilde f \leq 0$ at almost every rank.}
\label{fig:f-decades}
\end{figure}

The following three functionals of the fitted curves are essential to our analysis in the rest of the paper. Their roles are made precise in Sections~\ref{sec:relaxation} and \ref{sec:wave}, as indicated in parentheses. The first is the interior crossing $q^*$ of $\tilde f$, the unstable equilibrium between the two stable states. The second is the pair of endpoint slopes $\tilde f'(0)$ and $\tilde f'(1)$; in Section~\ref{sec:relaxation} they bound the rate at which the distribution relaxes toward the traveling wave (Proposition~\ref{prop:rate}), and we quote them as the half-lives $\ln 2/|\tilde f'(e)|$. The third is the Pareto tail exponent of the wave, an explicit function of $\tilde f'(1)$ and of the volatility and drift at the top rank (Corollary~\ref{cor:tail}); the fitted curves enter it through the top slope alone, and the volatility and drift enter at their values from Section~\ref{sec:wave} below, measured on the same 1975--2024 window. We quantify the sampling uncertainty of all three by a parametric bootstrap \cite{efron_bootstrap_1979}: we resample the \NBins{} bin counts as Poisson draws from the fitted means, refit the spline with the selected basis held fixed, and rebuild each functional, \BootReps{} times. The bistable sign pattern holds in every replicate. The \BootBandPct\% bands are $[\BootQStarLo, \BootQStarHi]$ for the crossing; \BootHalfBotLoMonths{} to \BootHalfBotHiMonths{} months and \BootHalfTopLoYears{} to \BootHalfTopHiYears{} years for the two half-lives; and $[\BootAlphaLo, \BootAlphaHi]$ for the tail exponent $\alpha = \AlphaRecWave$ of the calibrated wave, the last holding the volatility and drift inputs at their point estimates. For robustness, refitting on the full century of events back to \SampleFirstYear{} gives crossing \CenturyQStar{} and half-lives of \CenturyHalfBotMonths{} months and \CenturyHalfTopYears{} years. 

\section{Relaxation to the Traveling Wave}
\label{sec:relaxation}

The bistable reaction term suggests that the long-run capital distribution should move as a traveling wave. This section develops that structure. Taken together with Theorem~\ref{thm:mean-field}, our results imply that the market model with many firms lies close to a moving wave over long horizons. Subsection~\ref{subsec:wave-identity} derives an identity that any traveling profile must satisfy, coupling the shape to the volatility curve, the drift, and the turnover term. Subsection~\ref{subsec:wave-exist} proves existence and uniqueness of the wave for the full rank-dependent coefficient class, and connects the Pareto tail of the capital distribution to the wave's endpoint behaviour. Subsection~\ref{subsec:relax-sub} proves exponential convergence to the wave for constant $\sigma$ and $\tilde b$, where the equation is semilinear, building on the bistable theory of Fife and McLeod \cite{fife_approach_1977}, and identifies the rate through the spectrum of the linearization about the front (see Proposition~\ref{prop:rate}; the convergence argument for rank-dependent coefficients, which we do not pursue, is outlined in Remark~\ref{rem:rank-dependent}). Subsection~\ref{subsec:beyond} records what carries over for the monostable case, and compares our results to the literature on drift-stabilized models. Throughout, the coefficients satisfy the assumptions of Theorem~\ref{thm:mean-field}, and $\tilde f$ is bistable with $\tilde f'(0) < 0$ and $\tilde f'(1) < 0$; only the monostable case in Subsection~\ref{subsec:beyond} drops bistability.

\subsection{The Wave Identity}
\label{subsec:wave-identity}

The wave identity is formally derived from a nonlinear ordinary differential equation (ODE). Substituting the ansatz $w(t,y) = V(y - ct)$ into \eqref{eq:pde-general}, the time derivative contributes $-cV'$, and a traveling profile must solve the equation
\begin{equation}\label{eq:wave-ode}
    \frac{\d}{\d\xi}\Big(\frac{\sigma^2(V)}{2}\,V'\Big) + \big(c - \tilde b(V)\big)\,V' + \tilde f(V) \;=\; 0, \qquad \xi = y - ct.
\end{equation}
Since $\frac{\sigma^2(V)}{2}V' = \frac{\d}{\d\xi}A(V)$ and $\big(c - \tilde b(V)\big)V' = \frac{\d}{\d\xi}\big(cV - B(V)\big)$, with $A$ and $B$ as in Theorem~\ref{thm:mean-field}, we obtain
\begin{equation*}
    \frac{\d}{\d\xi}\lb( \frac{\d}{\d\xi}A(V) - B(V) + cV \rb) \;=\; -\,\tilde f(V).
\end{equation*}
Integrating from $\xi = -\infty$, where each term vanishes, gives
\begin{equation*}
    \frac{\d}{\d\xi}A(V) \;=\; B(V) - cV - \int_{-\infty}^{\xi} \tilde f\big(V(\eta)\big)\,\d\eta.
\end{equation*}
Every term is a function of $s = V(\xi)$ alone, which we may use in place of the variable $\xi$ when the profile is strictly increasing (which we prove in Theorem~\ref{thm:wave} below). Write $g(s)$ for the slope $V'$ at level $s$ (i.e.~the density of log capitalizations at the $s$-quantile) and
\begin{equation*}
    \varphi(s) \;=\; \frac{\d}{\d\xi}A(V) \;=\; \tfrac{1}{2}\,\sigma^2(s)\,g(s)
\end{equation*}
for the diffusive flux along the wave. Substituting $r = V(\eta)$ and $\d\eta = \d r/g(r)$ in the remaining integral leaves the \emph{wave identity}
\begin{equation}\label{eq:wave-identity-int}
    \varphi(s) \;=\; B(s) - cs - \int_0^s \frac{\tilde f(r)}{g(r)}\,\d r, \qquad s \in (0,1), \qquad \varphi(0) = \varphi(1) = 0.
\end{equation}
The boundary values hold because the profile flattens at its ends; parts (ii) and (iii) of Theorem~\ref{thm:wave} below make this precise. We note that the integrand $\tilde f/g$ is bounded: both $\tilde f$ and $g$ vanish at the endpoints, and the ratio tends to $\tilde f'(e)$ divided by the decay rate of $V'$ at the end state $e \in \{0,1\}$, again by Theorem~\ref{thm:wave}(ii).

Evaluating \eqref{eq:wave-identity-int} at $s = 1$, where the flux vanishes again, gives an identity for the speed of the wave:
\begin{equation}\label{eq:wave-speed}
    c \;=\; B(1) - \int_0^1 \frac{\tilde f(r)}{g(r)}\,\d r.
\end{equation}
Differentiating \eqref{eq:wave-identity-int} returns the wave identity in pointwise form,
\begin{equation}\label{eq:wave-identity-main}
    \varphi'(s) \;=\; \tilde b(s) - c - \frac{\tilde f(s)}{g(s)}.
\end{equation}
This identity ties the volatility, the drift, the turnover term, and the shape together pointwise in rank: prescribing any three determines the fourth. Subsection~\ref{subsec:drift-calib} uses the integrated identity \eqref{eq:wave-identity-int} to calibrate a rank-dependent drift, given the empirical volatility, reaction term, and capital distribution curve.

\begin{remark}\label{rem:quantile-ode}
Let $\Xi = V^{-1}$ be the quantile function of the wave, so that $\Xi(s)$ is the log capitalization at rank $s$ and $\Xi' = 1/g$.
Written in terms of $\Xi$, the wave identity \eqref{eq:wave-identity-int} becomes the first-order equation
\begin{equation}\label{eq:quantile-ode}
    \frac{\sigma^2(s)}{2\,\Xi'(s)} \;+\; \int_0^s \tilde f\,\d\Xi \;=\; B(s) - cs.
\end{equation}
Without turnover, $\tilde f = 0$ makes $c = B(1)$ by \eqref{eq:wave-speed}, and $\Xi'(s) = \sigma^2(s)/\big(2(B(s) - sB(1))\big)$ integrates to the equilibrium quantile function of the mean-field Atlas model in \cite[Section 2.3]{jourdain_capital_2015}. The condition from that work that the cumulative drift should lie above its chord is exactly the requirement that the flux $\varphi$ stay positive (see Subsection~\ref{subsec:beyond} for further comparison). With turnover, the equation is no longer explicit, since $\tilde f$ is integrated against $\Xi$ itself.
\end{remark}

\subsection{Existence, Uniqueness, and Tail Exponent}
\label{subsec:wave-exist}

We now prove that the wave exists and is unique for the full rank-dependent coefficient class. Our main reference is the monograph \cite{gilding_travelling_2004} on traveling waves in nonlinear diffusion-convection-reaction equations, whose theory is built on integral equations of the form \eqref{eq:wave-identity-int}.

\begin{theorem}\label{thm:wave}
Assume $\sigma^2, \tilde b \in C^1([0,1])$ with $\min_{[0,1]} \sigma^2 > 0$. Let $\tilde f \in C^1([0,1])$ be bistable: $\tilde f(0) = \tilde f(1) = 0$, and $\tilde f$ has exactly one interior zero $q^* \in (0,1)$, with $\tilde f < 0$ on $(0, q^*)$ and $\tilde f > 0$ on $(q^*, 1)$. Assume also that the three zeros are nondegenerate: $\tilde f'(0) < 0$, $\tilde f'(q^*) > 0$, and $\tilde f'(1) < 0$. Then:
\begin{enumerate}[label=(\roman*)]
\item There exist a speed $c \in \R$ and a strictly increasing profile $V \in C^2(\R)$ with $V(-\infty) = 0$ and $V(+\infty) = 1$, solving the traveling-wave equation \eqref{eq:wave-ode}. The pair is unique, in that no other speed admits a wavefront joining $0$ and $1$, and the profile is unique up to translation.
\item $V$ approaches each of its limits exponentially fast. The rate at the end state $e$ is a root of the characteristic equation
\begin{equation}\label{eq:char-poly}
    \tfrac{1}{2}\,\sigma^2(e)\, r^2 + \big(c - \tilde b(e)\big)\, r + \tilde f'(e) \;=\; 0, \qquad e \in \{0, 1\},
\end{equation}
the positive root at $e = 0$ and the negative root at $e = 1$.
\item The level parameterization of $V$ satisfies the wave identity \eqref{eq:wave-identity-main}, with boundary values $\varphi(0) = \varphi(1) = 0$.
\end{enumerate}
\end{theorem}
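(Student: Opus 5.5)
The plan is to reduce the second-order wave equation \eqref{eq:wave-ode} to a first-order problem in the level variable $s = V$, and then to use a shooting argument in the speed $c$, following the integral-equation approach of \cite{gilding_travelling_2004}. Given a monotone front, set $\varphi(s) = \tfrac12\sigma^2(s)V'(\xi)$ at level $s = V(\xi)$. Since $\d/\d\xi = (2\varphi/\sigma^2)\,\d/\d s$, equation \eqref{eq:wave-ode} becomes the first-order singular ODE
\begin{equation*}
    \varphi'(s) \;=\; \tilde b(s) - c - \frac{\sigma^2(s)\,\tilde f(s)}{2\,\varphi(s)}, \qquad \varphi > 0 \text{ on } (0,1), \qquad \varphi(0)=\varphi(1)=0,
\end{equation*}
which is \eqref{eq:wave-identity-main} with $g = 2\varphi/\sigma^2$. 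Conversely, a positive solution of this boundary problem gives a strictly increasing profile by solving $V' = 2\varphi(V)/\sigma^2(V)$. The translation freedom is exactly the choice of the level at $\xi = 0$. The bound $\min\sigma^2 > 0$ keeps the ODE for $V$ Lipschitz on $(0,1)$, and the $C^1$ coefficients then give $V \in C^2$. So (i) and (iii) reduce to showing that there is exactly one $c$ for which this problem has a positive solution, and that the solution is unique for that $c$.

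\textbf{Step 1 (local analysis at the endpoints).} Near $s = 0$ we have $\tilde f(s) \approx \tilde f'(0)s < 0$, so $(0,0)$ is a saddle of the equivalent planar system $\dot V = 2P/\sigma^2(V)$, $\dot P = (\tilde b(V)-c)\,2P/\sigma^2(V) - \tilde f(V)$. Its linearization has characteristic equation \eqref{eq:char-poly} at $e=0$. Because $\tilde f'(0)<0$, the product of the roots is negative, so there is exactly one positive root $r_0(c)$. The unstable manifold leaving $(0,0)$ into the quadrant $P>0$ is unique, and along it $\varphi(s) \sim \tfrac12\sigma^2(0) r_0(c)\, s$. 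In the same way, $(1,0)$ is a saddle whose stable manifold enters from $P>0$ at the negative root of \eqref{eq:char-poly} at $e=1$. This already yields (ii) once a connection is found: exponential approach at the stated rates follows from the stable manifold theorem applied to these hyperbolic saddles.

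\textbf{Step 2 (monotonicity in $c$ and shooting).} Let $\varphi_0(\cdot;c)$ be the branch emanating from $s=0$ and $\varphi_1(\cdot;c)$ the branch arriving at $s=1$, and compare them at the level $q^*$. From the ODE, increasing $c$ decreases $\varphi_0$ and increases $\varphi_1$. To see this, take $c_1<c_2$ and look at the first level where the ordering of the two solutions would fail: there the difference of the right-hand sides has a definite sign, and near $s=0$ the ordering is fixed by $r_0(c)$, which decreases in $c$. So $D(c) := \varphi_0(q^*;c) - \varphi_1(q^*;c)$ is strictly decreasing, and it is continuous by continuous dependence of the invariant manifolds on parameters. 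Before $q^*$ we have $\tilde f<0$, so $\varphi_0' > \tilde b - c$ and $\varphi_0$ stays positive on $(0,q^*]$. The branch $\varphi_1$, integrated backwards from $s=1$, stays positive on $[q^*,1)$ by the same argument. Hence both branches are defined at $q^*$, and a zero of $D$ gives the front. Strict monotonicity of $D$ gives uniqueness of $c$, and uniqueness of the manifolds gives uniqueness of $\varphi$, hence of the profile up to translation.

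\textbf{Step 3 (limits of $D$).} For $c \to -\infty$, comparison gives $\varphi_0(q^*) \gtrsim |c| q^*$, while $\varphi_1(q^*) \lesssim \int_{q^*}^1 \sigma^2\tilde f/(2\varphi_1)$ plus terms that stay bounded. So $D>0$. The symmetric estimate gives $D<0$ as $c\to+\infty$. The intermediate value theorem then finishes existence, and (iii) is the level equation above together with the boundary values established in Step 1.

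I expect the main obstacle to be the rigorous comparison in Step 2 near the singular points $s = 0$ and $s=1$, where both $\tilde f$ and $\varphi$ vanish. There the ordering has to be read off from the asymptotic slopes rather than from a clean strict inequality. I would handle this by working with the planar saddles and their manifolds directly, and by citing the corresponding monotonicity results in \cite{gilding_travelling_2004} for the bistable case with variable diffusion and convection.
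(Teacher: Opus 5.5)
Your level-variable shooting argument is a self-contained alternative to the paper's proof. The paper gets (i) by citing Gilding--Kersner (Application 8.11) and does only the saddle linearization itself, for (ii). Your comparison showing that $\varphi_0$ decreases and $\varphi_1$ increases in $c$ is correct, through the ordering of $r_0(c)$ and $|r_1(c)|$ plus the first-crossing argument.

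\textbf{The false claim in Step 2.} You assert that $\varphi_0(\cdot;c)$ stays positive on $(0,q^*]$, and this fails at $q^*$. Positivity on the open interval $(0,q^*)$ does hold. The reason is not $\varphi_0'>\tilde b-c$, which still allows $\varphi_0$ to decrease to zero. It is that $-\sigma^2\tilde f/(2\varphi)\to+\infty$ as $\varphi\downarrow 0$ wherever $\tilde f<0$. At $s=q^*$ this term is no longer singular, and the limit can be $0$:
\begin{itemize}
\item Let $C$ bound $\sigma^2|\tilde f(s)|/(2s(q^*-s))$ on $(0,q^*)$, and take $\psi(s)=s(q^*-s)$.
\item For $c$ large, $\psi>\varphi_0$ near $s=0$, since $\tfrac12\sigma^2(0)r_0(c)\to 0<q^*$.
\item At a first touching point you would need $\varphi_0'\geq\psi'\geq -q^*$. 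The equation gives $\varphi_0'\leq\|\tilde b\|_\infty-c+C<-q^*$, a contradiction.
\item Hence $\varphi_0(q^*;c)=0$.
\end{itemize}
Geometrically, the unstable manifold of $(0,0)$ runs into the interior equilibrium $(q^*,0)$. That point is a stable node once $c-\tilde b(q^*)\geq\sqrt{2\sigma^2(q^*)\tilde f'(q^*)}$. Symmetrically, the barrier $(s-q^*)(1-s)$ gives $\varphi_1(q^*;c)=0$ for $c$ very negative. So a zero of $D$ does not automatically give a front. If $\varphi_0(q^*)=\varphi_1(q^*)=0$, you have two orbits meeting at an equilibrium, not a heteroclinic connection, and $D$ is then only nonincreasing.

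\textbf{The missing idea.} You need to exclude this simultaneous landing, and that is exactly where $\tilde f'(q^*)>0$ enters. Your argument never uses that hypothesis, and it is the transversality condition of the result the paper cites. At $(q^*,0)$ the characteristic equation $\tfrac12\sigma^2(q^*)r^2+(c-\tilde b(q^*))r+\tilde f'(q^*)=0$ has a positive product of roots. An orbit with $P>0$ can reach the point from the left as $\xi\to+\infty$ only if both roots are real and negative, which forces $c>\tilde b(q^*)$. It can leave the point to the right only if both roots are real and positive, which forces $c<\tilde b(q^*)$. Complex roots make orbits rotate and cross $P=0$. Hence at most one branch vanishes at $q^*$ for each $c$, $D$ is strictly decreasing, and its zero has $\varphi_0(q^*)=\varphi_1(q^*)>0$. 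Without transversality and with rank-dependent $\tilde b$, the local analysis at $q^*$ no longer rules the landing out.

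\textbf{Two further repairs.}
\begin{itemize}
\item Continuity of $D$ at the threshold speed, where $\varphi_0(q^*;\cdot)$ first vanishes, needs a short estimate near the singular point. Continuous dependence of the manifolds alone does not give it.
\item Step 3 does not close as written. You have $\varphi_1(q^*)=\int_{q^*}^1\sigma^2\tilde f/(2\varphi_1)-\int_{q^*}^1\tilde b+c(1-q^*)$, and the first integral is not controlled. As $c\to-\infty$, $\varphi_1\approx\sigma^2\tilde f/(2|c|)$, so the integral is of order $|c|(1-q^*)$. That exceeds your lower bound $|c|q^*$ on $\varphi_0(q^*)$ when $q^*<1/2$. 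The barriers above fix this directly: $D(c)=\varphi_0(q^*;c)>0$ for $c\ll 0$, and $D(c)=-\varphi_1(q^*;c)<0$ for $c\gg 0$.
\end{itemize}
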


\begin{proof}
The reflection $y \mapsto -y$ turns our increasing profile into the decreasing one of \cite{gilding_travelling_2004} and reverses the sign of the speed; our flux $\varphi$ is their unknown $\theta$, and \eqref{eq:wave-identity-int} is, in our variables, their singular integral equation \cite[(1.9)]{gilding_travelling_2004}. With a bistable source and the convection term present, the relevant statement is \cite[Application 8.11]{gilding_travelling_2004}: there is exactly one speed $c$ for which a wavefront joining the two end states exists, and that wavefront is unique modulo translation. Its hypothesis $(ca')'(q^*) > 0$ is, in our notation, the transversality condition $\tilde f'(q^*) > 0$. Uniqueness of the speed and of the profile is also available separately as \cite[Theorems 8.3(vii) and 8.7(c)]{gilding_travelling_2004}, and that $V$ is $C^2$ with $V' > 0$ throughout is \cite[Theorem 2.39]{gilding_travelling_2004}. This proves (i).

The asymptotics of (ii) do not follow from \cite{gilding_travelling_2004}, whose endpoint results assume no convection, so we must argue this point using the phase plane, as in \cite[Section 4]{aronson_nonlinear_1975}. Linearizing \eqref{eq:wave-ode} at $e \in \{0,1\}$ produces the characteristic polynomial \eqref{eq:char-poly}, whose roots are real and of opposite sign because $\tilde f'(e) < 0$: both end states are saddles, the connection leaves $0$ along the positive root as $\xi \to -\infty$ and approaches $1$ along the negative root as $\xi \to +\infty$, and $V$, $V'$ and $V''$ converge exponentially at both ends. Finally, since $V$ is $C^2$ with $V' > 0$ and approaches its limits exponentially, every step of the derivation of \eqref{eq:wave-identity-int} applies to $V$ verbatim: the boundary terms at $-\infty$ vanish, $\tilde f(V)$ is integrable on left half-lines, and the level substitution is a $C^1$ change of variables. The boundary values $\varphi(0) = \varphi(1) = 0$ follow from the exponential decay of $V'$, which proves (iii).
\end{proof}

\begin{remark}
The transversality hypothesis $\tilde f'(q^*) > 0$, which the calibrated reaction term satisfies, is needed only for the rank-dependent existence theory. With constant coefficients the strict sign change alone suffices: existence and uniqueness of the front follow from \cite[Theorem 2.4(c) and Corollary 2.3]{fife_approach_1977}, with no transversality condition on $\tilde f$. In particular, transversality plays no part in the constant-coefficient relaxation theory of Subsection~\ref{subsec:relax-sub} (Theorem~\ref{thm:relaxation}).
\end{remark}

The rates in part (ii) hold in a logarithmic sense: for coefficients that are merely $C^1$, a two-sided comparison with the linearization at the saddle gives $\xi^{-1} \log|V(\xi) - e| \to r$, and this is all that we use in the sequel. When the derivatives of the coefficients are Lipschitz near the endpoints, as those of the calibrated model are, the linearization estimates sharpen to give $V - e \sim C \exp(r\xi)$; both statements are classical, see \cite[Chapter 13, Problems 5--7]{coddington_theory_1955}. The next corollary records the top rate as a Pareto tail exponent.

\begin{corollary}\label{cor:tail}
Under the hypotheses of Theorem~\ref{thm:wave}, the profile satisfies $1 - V(y) = e^{-\alpha y (1 + o(1))}$ as $y \to \infty$, where
\begin{equation}\label{eq:alpha-main}
    \alpha \;=\; \frac{a + \sqrt{a^{2} - 2\,\sigma^{2}(1)\,\tilde f'(1)}}{\sigma^{2}(1)},
    \qquad a \;=\; c - \tilde b(1):
\end{equation}
the capitalization distribution of the wave has a Pareto tail with exponent $\alpha$ at the top of the market. Symmetrically, $V(y) = e^{\beta y (1 + o(1))}$ as $y \to -\infty$, where $\beta$ is the positive root of \eqref{eq:char-poly} at $e = 0$.
\end{corollary}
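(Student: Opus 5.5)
The corollary is a direct reading of Theorem~\ref{thm:wave}(ii) together with the logarithmic form of the endpoint rates recorded just after the theorem. I would first solve the characteristic equation \eqref{eq:char-poly} at $e=1$. Dividing by $\tfrac12\sigma^2(1)$ gives roots
\begin{equation*}
    r_\pm \;=\; \frac{-a \pm \sqrt{a^2 - 2\sigma^2(1)\tilde f'(1)}}{\sigma^2(1)}, \qquad a = c - \tilde b(1).
\end{equation*}
Because $\tilde f'(1) < 0$, the discriminant exceeds $a^2$, so the square root is strictly larger than $|a|$. Hence $r_+ > 0 > r_-$ regardless of the sign of $a$.

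Theorem~\ref{thm:wave}(ii) selects the negative root $r_-$ at $e = 1$. Setting $\alpha := -r_- = \bigl(a + \sqrt{a^2 - 2\sigma^2(1)\tilde f'(1)}\bigr)/\sigma^2(1)$ gives exactly \eqref{eq:alpha-main}, and $\alpha > 0$.

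Next I would invoke the logarithmic form $\xi^{-1}\log|V(\xi) - 1| \to r_- = -\alpha$ as $\xi \to +\infty$. Since $V$ is strictly increasing with limit $1$, we have $1 - V > 0$, so this reads $\log(1 - V(y)) = -\alpha y\,(1 + o(1))$, which is the claimed form. The same argument at $e = 0$, using the positive root $\beta = r_+$ of \eqref{eq:char-poly} there and $V > 0$, gives $\log V(y) = \beta y\,(1 + o(1))$ as $y \to -\infty$.

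For the Pareto interpretation I would translate back to capitalization. The wave's capitalization $X = e^{Y}$ has CDF $V(\log x)$, so
\begin{equation*}
    \P(X > x) \;=\; 1 - V(\log x) \;=\; x^{-\alpha(1 + o(1))}, \qquad x \to \infty,
\end{equation*}
which is a Pareto tail with exponent $\alpha$ in the logarithmic sense. The $o(1)$ is what the merely-$C^1$ rates provide. If the coefficients have Lipschitz derivatives near $1$, the sharpened asymptotics $V - 1 \sim C e^{r_-\xi}$ upgrade this to a genuine power law $\P(X > x) \sim C x^{-\alpha}$.

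The only substantive step is the logarithmic rate itself. It comes from the phase-plane saddle analysis in the proof of Theorem~\ref{thm:wave}(ii): the connection lies on the stable manifold of the saddle at $(1,0)$, which is tangent to the eigendirection of $r_-$. Along that manifold, comparison with the linearization pins $\log(1 - V)/\xi$ between $r_- \pm \varepsilon$ for large $\xi$. I take this as given from the theorem and the remark following it, so the corollary itself reduces to the algebra above plus choosing the correct root.
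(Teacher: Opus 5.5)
Your proposal is correct and matches the paper's own proof. Both take the negative root of \eqref{eq:char-poly} at $e=1$ from Theorem~\ref{thm:wave}(ii), in the logarithmic sense recorded after that theorem, solve the quadratic to get $\alpha = -r_-$, and take the positive root at $e=0$ for $\beta$. Your extra check that $r_+ > 0 > r_-$ for either sign of $a$ is a harmless elaboration.
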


\begin{proof}
By Theorem~\ref{thm:wave}(ii), $1 - V$ decays at the rate given by the negative root of \eqref{eq:char-poly} at $e = 1$, and $V$ grows at the positive root at $e = 0$; solving \eqref{eq:char-poly} for the former gives \eqref{eq:alpha-main}.
\end{proof}

An exponential tail in log capitalization is a Pareto tail in capitalization. Small $\alpha$ is concentration, and market diversity in the spirit of \cite[Definition 2.2.1]{fernholz_stochastic_2002} corresponds to $\alpha \geq 1$. In a finite sample drawn from the wave profile, the largest weight vanishes as the market grows when $\alpha > 1$, and remains a positive fraction when $\alpha < 1$. The finite-market version of this threshold is the phase transition of \cite{chatterjee_phase_2010}, whose Poisson--Dirichlet parameter plays the role of $\alpha$; in the mean-field setting, $\alpha$ is the (turnover-corrected) critical diversity index of \cite[Remark 2.5]{jourdain_capital_2015}, which we will return to quantitatively in Section~\ref{sec:portfolio}. We see that higher volatility at the top thickens the tail, while turnover at the top ($\tilde f'(1)$ more negative) thins it. Formally, setting $\tilde f'(1) = 0$ in \eqref{eq:alpha-main} with $a > 0$ collapses the formula to $\alpha = 2a/\sigma^2(1)$. This no-turnover value is the familiar drift-gap-to-variance ratio that sets capital distribution tails in Atlas-type models \cite{banner_atlas_2005, fernholz_stochastic_2002}, and turnover enters as the square-root correction. Subsection~\ref{subsec:drift-calib} evaluates \eqref{eq:alpha-main} on the calibrated wave. 

\begin{example}[Nagumo's Equation]\label{ex:nagumo}
If the calibrated $\tilde f$ is replaced by a cubic with the same zeros, $\kappa\,w(w - q^*)(1 - w)$ with $q^* \approx \FCross$, the constant-coefficient wave is logistic,
\begin{equation*}
    V(\xi) \;=\; \Big(1 + e^{-\sqrt{\kappa}\,\xi/\sigma}\Big)^{-1},
    \qquad
    c \;=\; \tilde\mu + \sigma\sqrt{\kappa}\,\Big(q^* - \frac{1}{2}\Big),
\end{equation*}
so the capitalization distribution is exactly log-logistic, with Pareto tails of exponent $\sqrt{\kappa}/\sigma$ at both ends. Our calibrated $\tilde f$ is more asymmetric than a cubic, so we solve for the wave numerically.
\end{example}

\subsection{Exponential Relaxation}
\label{subsec:relax-sub}

This subsection proves convergence to the wave in the constant-coefficient case, where \eqref{eq:pde-general} is semilinear. The classical bistable convergence theory, which we use directly, is due to \cite{fife_approach_1977}; the spectral framework for the stability of fronts is developed in \cite[Chapter 5]{henry_geometric_1981}. For the quasilinear case, \cite{meyries_quasi-linear_2014} is a user's guide to well-posedness, spectra, and stability of traveling waves. We use its semilinear specialization in Proposition~\ref{prop:rate} below, and Remark~\ref{rem:rank-dependent} points to the ingredients, assembled largely from that guide, by which the convergence rate is expected to extend to rank-dependent coefficients. The quantitative rate estimates behind Proposition~\ref{prop:rate} are from \cite{pruss_convergence_2009}.

\begin{theorem}[Exponential relaxation to the traveling wave]\label{thm:relaxation}
In the setting of Theorem~\ref{thm:wave}, assume in addition that $\sigma$ and $\tilde b \equiv \tilde\mu$ are constant. Then for every CDF initial condition $w_0$, there exist $y_0 \in \R$ and constants $K, \gamma > 0$ such that the unique bounded solution $w$ of \eqref{eq:pde-general} started from $w_0$ satisfies
\begin{equation}\label{eq:relaxation}
    \sup_{y \in \R}\, \big| w(t, y) - V(y - ct - y_0) \big| \;\leq\; K e^{-\gamma t}, \qquad t \geq 0,
\end{equation}
where $(c, V)$ is the wave of Theorem~\ref{thm:wave}.
\end{theorem}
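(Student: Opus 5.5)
The strategy is to reduce to Fife--McLeod's bistable convergence theorem \cite{fife_approach_1977}. With $\sigma$ and $\tilde b\equiv\tilde\mu$ constant, \eqref{eq:pde-general} reads $\pp_t w = \tfrac{\sigma^2}{2}\pp^2_{yy}w - \tilde\mu\,\pp_y w + \tilde f(w)$. The Galilean change of variables $z = y - \tilde\mu t$ removes the convection. The rescaling $z = \sigma x/\sqrt2$ then turns the equation into $\pp_t w = \pp^2_{xx}w + \tilde f(w)$. This is exactly the setting of \cite{fife_approach_1977}, with $\tilde f\in C^1$, $\tilde f(0)=\tilde f(1)=0$, $\tilde f'(0),\tilde f'(1)<0$, and a single sign change at $q^*$. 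The wave of Theorem~\ref{thm:wave} transforms to the Fife--McLeod front, whose speed is shifted by $\tilde\mu$. Uniqueness of the front (Theorem~\ref{thm:wave}(i)) identifies the two. Well-posedness of the bounded solution, and the fact that it stays in $[0,1]$, follow from the comparison principle for the semilinear heat equation with Lipschitz $\tilde f$.

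Next I check the Fife--McLeod hypothesis on initial data. Their Theorem 3.1 requires $0\le w_0\le 1$ and
\[
\limsup_{x\to-\infty} w_0(x) < q^*<\liminf_{x\to+\infty} w_0(x).
\]
A CDF has limits $0$ and $1$ at $\mp\infty$, so this holds automatically. Their conclusion is then precisely \eqref{eq:relaxation}, with some shift $y_0$ and constants $K,\gamma>0$.

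The argument behind it, which I would reproduce in outline, runs in three steps.
\begin{enumerate}[itemsep=1pt]
\item \emph{Sub- and supersolution sandwich.} Build $V(\xi \mp \xi_1 \pm \xi_2(1-e^{-\beta t})) \pm q_0 e^{-\beta t}$ with small $q_0$ and $\beta<\min(|\tilde f'(0)|,|\tilde f'(1)|)$. These are sub- and supersolutions because $\tilde f'<0$ near $0$ and $1$ and $V'>0$ (Theorem~\ref{thm:wave}(i)--(ii)). After a finite time $t_0$, which the heat flow needs to push $w$ within $q_0$ of $0$ and $1$ on half-lines, this traps $w$ between two translates of the wave up to $O(e^{-\beta t})$.
\item \emph{Convergence to a single translate.} Use a Lyapunov or $\omega$-limit argument in the moving frame. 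Any limit point is an entire solution trapped between translates of the wave, and is therefore a translate itself by the uniqueness of the front. Hence $w\to V(\cdot - ct-y_0)$ uniformly.
\item \emph{Exponential rate.} Linearize about the front. The operator $L = \tfrac{\sigma^2}{2}\pp^2 + (c-\tilde\mu)\pp + \tilde f'(V)$ has a simple eigenvalue $0$ with eigenfunction $V'$. The rest of its spectrum lies in $\{\Re\lambda\le -\gamma_0\}$: the essential spectrum is bounded by $\max(\tilde f'(0),\tilde f'(1))<0$, and Sturm--Liouville theory says $0$ is the top eigenvalue because $V'>0$ has no zeros. Once $w$ is uniformly close to the wave manifold, nonlinear orbital stability with asymptotic phase \cite[Chapter 5]{henry_geometric_1981} gives the exponential rate. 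The constant $K$ then absorbs the transient on $[0,t_0]$.
\end{enumerate}

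The main obstacle is the first step of this outline: showing that an arbitrary CDF, which may be very far from any translate, enters a neighbourhood of the wave manifold in finite time. The local spectral argument does not supply this. I expect to rely entirely on the Fife--McLeod global comparison construction here, verifying only that the Galilean and scaling transformations preserve its hypotheses and that $y_0$ and $c$ transform correctly back to the $y$ variable.
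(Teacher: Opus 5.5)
Your route is the paper's own: the Galilean shift and rescaling to $\pp_t U=\pp^2_{zz}U+\tilde f(U)$, then \cite[Theorem 3.1]{fife_approach_1977} with $\alpha_0=\alpha_1=q^*$, with the data condition read off from the limits of a CDF. The one hypothesis you do not check is the regularity of the data. The standing assumption of \cite{fife_approach_1977} is piecewise-continuous initial data, and your restatement of their Theorem 3.1 drops it. A CDF need not be piecewise continuous (it can jump at every rational). So ``their conclusion is then precisely \eqref{eq:relaxation}'' is not justified at $t=0$ for an arbitrary CDF. The repair is the paper's. For $t>0$ the bounded solution is continuous, stays in $[0,1]$, and keeps the limits $0$ and $1$ at $\mp\infty$ (the reaction vanishes at the end states, so the Duhamel iteration preserves them). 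You therefore apply the theorem from some $t_1>0$ and enlarge $K$ over $[0,t_1]$, where $|w-V|\le 1$. Your remark that $K$ absorbs a transient is the right mechanism; it is needed for this reason too. Also, the speeds are related by $c=\tilde\mu+\tfrac{\sigma}{\sqrt2}\,c_0$, not by a pure shift by $\tilde\mu$.

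Your outline of the argument behind Fife--McLeod is not needed for the proof, but two of its claims are wrong as stated. In Step~2, uniqueness of the front does not by itself turn an entire solution trapped between two translates into a translate. The Lyapunov argument is what shows the limit is a traveling wave, and only then does uniqueness apply. In Step~3, positivity of $V'$ shows that $0$ is the \emph{principal} eigenvalue, not that the rest of the spectrum lies in $\{\operatorname{Re}\lambda\le-\gamma_0\}$. Further eigenvalues may sit in $(-\gamma_0,0)$: already for the cubic of Example~\ref{ex:nagumo} with $q^*=\tfrac12$ there is one at $-\tfrac34\gamma_0$. In general the spectral gap is some $\gamma_1\in(0,\gamma_0]$, with equality exactly when no such eigenvalue exists; this is the distinction drawn in Proposition~\ref{prop:rate}. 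The error is harmless for the theorem, which asks only for some $\gamma>0$.
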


\begin{proof} \textbf{Step 1: Reduction to Canonical Form.}
With $\sigma$ and $\tilde b \equiv \tilde\mu$ constant, \eqref{eq:pde-general} is the semilinear equation
\begin{equation}\label{eq:semilinear-form}
    \pp_t w \;=\; \frac{\sigma^2}{2}\,\pp^2_{yy} w - \tilde\mu\,\pp_y w + \tilde f(w),
\end{equation}
and the whole statement reduces to the classical bistable theory by a change of variables.
Passing to the frame $y \mapsto y + \tilde\mu t$ removes the drift: if $v(t,y) := w(t, y + \tilde\mu t)$ then $\pp_t v = \tfrac{1}{2}\sigma^2 \pp^2_{yy} v + \tilde f(v)$. Rescaling space by $z := y\sqrt{2}/\sigma$ removes the diffusion constant, and $U(t,z) := v(t, \sigma z/\sqrt 2)$ solves
\begin{equation}\label{eq:canonical}
    \pp_t U \;=\; \pp^2_{zz} U + \tilde f(U), \qquad z \in \R,\ t > 0.
\end{equation}
A traveling wave of \eqref{eq:canonical} with speed $c_0$ and profile $U_0$ corresponds to a traveling wave of \eqref{eq:semilinear-form} with
\begin{equation*}
    c \;=\; \tilde\mu + \frac{\sigma}{\sqrt 2}\,c_0,
    \qquad
    V(\xi) \;=\; U_0\big(\xi\sqrt{2}/\sigma\big),
\end{equation*}
and sup-norm distances between CDFs are unchanged by either substitution, so it suffices to prove the theorem for \eqref{eq:canonical}.

Extend $\tilde f$ off $[0,1]$ to a bounded Lipschitz function on $\R$. For bounded initial data, the Cauchy problem for \eqref{eq:canonical} then has a unique bounded solution, classical for $t > 0$: Duhamel iteration against the heat semigroup gives existence, and Gronwall's inequality gives uniqueness among bounded solutions. Since the constants $0$ and $1$ are solutions, the comparison principle keeps solutions with data in $[0,1]$ inside $[0,1]$; in particular the choice of extension does not matter. The constant-coefficient existence theory applies directly: \cite[Theorem 2.4(c) and Corollary 2.3]{fife_approach_1977} yield a strictly increasing front $U_0$ for \eqref{eq:canonical} with limits $0$ and $1$, unique together with its speed $c_0$ up to translation, whose endpoint rates in the canonical variables read $\tfrac{1}{2}\big[-c_0 \pm \sqrt{c_0^2 - 4\tilde f'(e)}\,\big]$ (the $+$ sign at $e = 0$, the $-$ sign at $e = 1$), consistent with the linearization in \cite[proof of Lemma 4.3]{fife_approach_1977}. Finally, for initial laws with a finite first moment these solutions are admissible in the sense of Lemma~\ref{lem:uniqueness}, the moment propagating by the truncation argument of the Step 2 moment bound in the proof of Theorem~\ref{thm:mean-field}; the waves are admissible as well, by their endpoint rates.

\paragraph{Step 2: Global Convergence.} This is \cite[Theorem 3.1]{fife_approach_1977}. That theorem assumes $\tilde f \in C^1[0,1]$ with $\tilde f(0) = \tilde f(1) = 0$, $\tilde f'(0) < 0$, $\tilde f'(1) < 0$, $\tilde f < 0$ on $(0,\alpha_0)$ and $\tilde f > 0$ on $(\alpha_1, 1)$ for some $0 < \alpha_0 \leq \alpha_1 < 1$, which our hypotheses give with $\alpha_0 = \alpha_1 = q^*$, together with the existence of a front joining the end states, which we get from Step 1; and it requires of the initial condition only that $0 \leq w_0 \leq 1$ with
\begin{equation}\label{eq:fm-data}
    \limsup_{y \to -\infty} w_0(y) \;<\; q^* \;<\; \liminf_{y \to +\infty} w_0(y).
\end{equation}
Its conclusion is that there are $y_0 \in \R$ and constants $K, \gamma > 0$ with $\sup_y |w(t,y) - V(y - ct - y_0)| \leq K e^{-\gamma t}$. A CDF satisfies \eqref{eq:fm-data} outright, its limits being $0$ and $1$ (the condition asks far less than being a CDF, but we do not pursue the wider class). One point of bookkeeping: the standing assumption of \cite{fife_approach_1977} is piecewise-continuous data, and a CDF may have jumps at a dense set of points. However, by Step 1 the solution is continuous for $t > 0$, stays in $[0,1]$, and retains the limits $0$ and $1$ at $y = \mp\infty$ (the reaction term vanishes at the end states, so the Duhamel iteration preserves the limits), so the theorem applies from any positive time. Enlarging $K$ over the initial interval, where $|w - V| \leq 1$, extends the bound to every $t \geq 0$. This proves \eqref{eq:relaxation}.
\end{proof}

Theorem 3.1 of \cite{fife_approach_1977} produces some rate $\gamma > 0$. The following proposition identifies how large the rate can be taken, in terms of the spectrum of the linearization about the front.

\begin{proposition}\label{prop:rate}
In the setting of Theorem~\ref{thm:relaxation}, with $\tilde f'$ locally Lipschitz, let $U_0$ and $c_0$ be the front and speed in the canonical frame \eqref{eq:canonical} from Step 1 of the proof, and let
\begin{equation*}
    \mathcal{L}\psi \;=\; \pp^2_{zz}\psi + c_0\,\pp_z\psi + \tilde f'(U_0)\,\psi
\end{equation*}
be the linearization about the front in the moving frame, acting on the space $C_{\mathrm{unif}}(\R)$ of bounded uniformly continuous functions. Then zero is a simple eigenvalue of $\mathcal{L}$, and the rate in \eqref{eq:relaxation} may be taken to be any $\gamma < \gamma_1$, where the spectral gap $\gamma_1 > 0$ is the distance from zero to the rest of the spectrum. Moreover,
\begin{equation}\label{eq:gamma-zero}
    \gamma_1 \;\leq\; \gamma_0 \;:=\; \min\big(|\tilde f'(0)|,\ |\tilde f'(1)|\big),
\end{equation}
where $\gamma_0$ is the edge of the essential spectrum of $\mathcal{L}$, and equality holds in \eqref{eq:gamma-zero} if and only if zero is the only eigenvalue of $\mathcal{L}$ above the essential spectrum.
\end{proposition}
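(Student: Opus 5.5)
We will establish the three parts of Proposition~\ref{prop:rate} in order: the structure of the spectrum, the identification of the essential spectrum edge, and the link between the spectral gap and the relaxation rate.

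\emph{Zero is a simple eigenvalue.} Differentiating the front equation $U_0'' + c_0 U_0' + \tilde f(U_0) = 0$ in $z$ gives $\mathcal L U_0' = 0$. By Theorem~\ref{thm:wave}(ii), in canonical variables, $U_0'$ is bounded and uniformly continuous and decays exponentially, so zero is an eigenvalue. For geometric simplicity, note that any second bounded solution of $\mathcal L\psi = 0$ would, by the Wronskian $W(z) = W(0)e^{-c_0 z}$, have to grow exponentially at one end. Algebraic simplicity means there is no bounded $\psi$ with $\mathcal L\psi = U_0'$. To show this, we conjugate by $e^{c_0 z/2}$, which makes $\mathcal L$ the self-adjoint Schr\"odinger operator $\partial_{zz} - c_0^2/4 + \tilde f'(U_0)$ on $L^2$. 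Its ground state $e^{c_0 z/2}U_0' > 0$ is simple. Solvability of $\mathcal L\psi = U_0'$ would then force $\int e^{c_0 z}(U_0')^2 = 0$, a contradiction. There is a subtlety here: the conjugation changes the function space from $C_{\mathrm{unif}}$ to $L^2$. We will handle it by working with the weighted space and noting that isolated eigenvalues to the right of the essential spectrum coincide in both settings (Henry, Chapter 5).

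\emph{The essential spectrum and $\gamma_0$.} The coefficient $\tilde f'(U_0(z))$ converges exponentially to $\tilde f'(0)$ as $z \to -\infty$ and to $\tilde f'(1)$ as $z \to +\infty$. By Weyl/Henry's theorem on asymptotically constant operators, the essential spectrum on $C_{\mathrm{unif}}$ is bounded by the Fredholm borders, namely the curves $\{-k^2 + i c_0 k + \tilde f'(e) : k \in \R\}$ for $e = 0, 1$. These are parabolas opening to the left with rightmost points $\tilde f'(e) < 0$. The essential spectrum therefore lies in $\{\mathrm{Re}\,\lambda \le -\gamma_0\}$ and touches the line $\mathrm{Re}\,\lambda = -\gamma_0$, which gives the edge claim.

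\emph{Remaining eigenvalues and the inequality.} Outside the essential spectrum, the spectrum consists of isolated eigenvalues of finite multiplicity. By the self-adjoint conjugation, these eigenvalues are real. By Sturm--Liouville oscillation theory, zero, whose eigenfunction $U_0'$ is positive, is the largest eigenvalue. Hence $\gamma_1$ equals the distance from zero to the next eigenvalue in $(-\gamma_0, 0)$ if such an eigenvalue exists, and equals $\gamma_0$ otherwise. This gives both $\gamma_1 \le \gamma_0$ and the equality criterion.

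\emph{Rate.} With the spectral picture in place, the rate claim follows from the principle of linearized stability with asymptotic phase, taking \cite{pruss_convergence_2009} as the quantitative statement. The spectrum of $\mathcal L$ other than the simple zero lies in $\{\mathrm{Re}\,\lambda \le -\gamma_1\}$, and the semigroup is sectorial. Consequently, for every $\gamma < \gamma_1$, the spectral projection complementary to $U_0'$ decays at rate $e^{-\gamma t}$. The nonlinearity is $C^{1,1}$ because $\tilde f'$ is locally Lipschitz, which is exactly where that hypothesis enters. Solutions starting near the family of translates therefore converge to a translate at rate $e^{-\gamma t}$. Theorem~\ref{thm:relaxation} shows that every CDF solution eventually enters such a neighbourhood, and this transfers the rate to all data, at the cost of enlarging $K$.

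The main obstacle is applying the abstract stability theorem on $C_{\mathrm{unif}}$ rather than on $L^2$. Two facts must be checked: that the semigroup is analytic there, and that the weighted self-adjointness argument correctly locates all isolated eigenvalues. We would verify both through Henry's exponential-dichotomy characterization.
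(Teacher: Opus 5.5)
Your proposal is correct and has the same skeleton as the paper: the kernel element $U_0'$, the essential spectrum read off the frozen operators $\mathcal L_e$ via \cite[Appendix to Chapter 5]{henry_geometric_1981}, isolated eigenvalues to its right, and local exponential stability with asymptotic phase once Theorem~\ref{thm:relaxation} has brought the solution near a translate.

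The difference is that you prove what the paper cites. For simplicity of zero and the location of the remaining spectrum, the paper invokes \cite[\S5.4]{henry_geometric_1981} as applying ``verbatim''. You instead conjugate by $e^{c_0 z/2}$ to the Schr\"odinger operator $\pp^2_{zz} - c_0^2/4 + \tilde f'(U_0)$ and apply Sturm--Liouville theory. This buys reality of every eigenvalue with $\operatorname{Re}\lambda > -\gamma_0$. That is exactly what makes the paper's closing equivalence literally true, namely that ``no eigenvalue in $(-\gamma_0,0)$'' is the same as ``zero is the only eigenvalue above the essential spectrum''; without it, complex eigenvalues would need separate treatment.

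The weighted-space subtlety you flag closes by direct computation. For $\operatorname{Re}\lambda > \tilde f'(e)$, a bounded eigenfunction uses, as $z \to \pm\infty$, the root $\mu$ with $\mu + c_0/2 = \mp\tfrac12\sqrt{c_0^2 + 4(\lambda - \tilde f'(e))}$. The principal square root here has real part exceeding $|c_0|$, so this is indeed the decaying root, and $e^{c_0 z/2}\psi$ still decays at both ends. For algebraic simplicity you can skip the conjugation altogether: pair $\mathcal L\psi = U_0'$ with the adjoint kernel element $e^{c_0 z}U_0'$, which decays at both ends and so kills the boundary terms against any bounded $\psi$.

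For the nonlinear step, note that the maximal-regularity framework behind \cite{pruss_convergence_2009} and \cite{meyries_quasi-linear_2014} is not available on $C_{\mathrm{unif}}$ itself. That is why the paper uses parabolic smoothing to pass to H\"older spaces. Your fallback to Henry's semilinear theory does work on $C_{\mathrm{unif}}$: only sectoriality is needed there, because $v \mapsto \tilde f(U_0+v) - \tilde f(U_0)$ is a zeroth-order map with Lipschitz derivative. Your route is more self-contained for constant coefficients, while the paper's setting is the one that would extend to the quasilinear case of Remark~\ref{rem:rank-dependent}.
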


\begin{proof}
Differentiating the wave ODE shows $\mathcal{L}U_0' = 0$, so zero is an eigenvalue with the positive eigenfunction $U_0'$. As $z \to \pm\infty$, the coefficients of $\mathcal{L}$ converge to those of the constant-coefficient operators $\mathcal{L}_e = \pp^2_{zz} + c_0 \pp_z + \tilde f'(e)$, $e \in \{0, 1\}$. The symbol of $\mathcal{L}_e$ at frequency $\zeta$ is $\tilde f'(e) - \zeta^2 + i c_0 \zeta$, whose real part is largest at $\zeta = 0$, where it equals $\tilde f'(e)$. For a scalar operator whose coefficients converge at $\pm\infty$, \cite[Theorem A.2 and the Example following it, Appendix to Chapter 5]{henry_geometric_1981} gives
\begin{equation}\label{eq:ess-spec}
    \sigma_{\mathrm{ess}}(\mathcal{L}) \;\subseteq\; \big\{\operatorname{Re}\lambda \leq \max(\tilde f'(0), \tilde f'(1))\big\},
\end{equation}
with equality at the edge. (In \cite{henry_geometric_1981}, the equation is written so that stability corresponds to spectrum in the right half-plane; we have translated accordingly.) Moreover, zero is a simple eigenvalue and the rest of the spectrum lies strictly to the left: Henry carries this out in \cite[\S5.4]{henry_geometric_1981} for the cubic instance of the front, and the argument applies verbatim to any bistable $C^1$ reaction term. By the definition of the essential spectrum in \cite[Appendix to Chapter 5]{henry_geometric_1981}, the spectrum to its right consists of isolated eigenvalues of finite multiplicity; these accumulate only in the essential spectrum, and $\mathcal{L}$ is sectorial \cite[Section 1.3]{henry_geometric_1981}, so only finitely many lie in any strip $\operatorname{Re}\lambda \geq -\gamma_0 + \epsilon$, and $\gamma_1 > 0$ is well defined. Since $\tilde f'(0)$ and $\tilde f'(1)$ are negative, the edge $\max(\tilde f'(0), \tilde f'(1))$ equals $-\gamma_0$, and so the essential spectrum reaches $-\gamma_0$. This proves \eqref{eq:gamma-zero}, and equality holds exactly when no eigenvalue of $\mathcal{L}$ lies in $(-\gamma_0, 0)$.

It remains to show that any rate $\gamma < \gamma_1$ can be attained in \eqref{eq:relaxation}. By Step 2 of the proof of Theorem~\ref{thm:relaxation}, the solution is eventually uniformly close to a translate of the front. We then apply the local convergence of \cite[Proposition 4.1]{meyries_quasi-linear_2014}; our equation is the semilinear case of that framework, and parabolic smoothing at any positive time upgrades sup-norm closeness to the closeness in H\"older norm required there (H\"older phase spaces are admitted by \cite[Theorem 2.6]{meyries_quasi-linear_2014}). That the rate can be taken to be any number below the spectral gap holds by \cite[(2.11) and (2.27)]{pruss_convergence_2009}. Our conclusion \eqref{eq:relaxation} is in the sup norm, which the H\"older norm dominates.
\end{proof}

\begin{remark}[Rank-dependent convergence]\label{rem:rank-dependent}
Theorems~\ref{thm:mean-field} and \ref{thm:wave} allow $\sigma$ and $b$ to depend on rank, in which case \eqref{eq:pde-general} is quasilinear and the reduction in Step 1 of the proof of Theorem~\ref{thm:relaxation} is no longer available. We expect that the convergence of Theorem~\ref{thm:relaxation} also holds in this setting, and with the same rate as in Proposition~\ref{prop:rate}: the essential spectrum of the quasilinear linearization is again determined by freezing the coefficients at the two ends \cite[\S3]{meyries_quasi-linear_2014}, and its edge depends only on the endpoint slopes of the reaction term, so \eqref{eq:gamma-zero} would not change. However, a detailed proof would require several technical ingredients which we do not pursue here: existence and interior regularity of solutions for arbitrary CDF initial data; a parabolic Harnack inequality for the contraction step (in the constant-coefficient case, \cite[Lemmata 4.1 and 4.4--4.5]{fife_approach_1977} trap the solution between two translates of the front and merge them with a Lyapunov argument; \cite[Lemma 3.3]{chen_existence_1997} provides an alternative in the quasilinear case, but uses a Harnack inequality as a hypothesis); and the verification of the hypotheses of the quasilinear maximal-regularity framework for our divergence-form equation (see \cite[Proposition 4.1]{meyries_quasi-linear_2014}, and \cite[\S5]{pruss_convergence_2009} where this is carried out for a quasilinear bistable front).
\end{remark}

\begin{remark}[Relaxation timescales]\label{rem:clock}
The bound $\gamma_0$ in Proposition~\ref{prop:rate} depends only on the endpoint slopes of the reaction term, and by the previous remark we expect the same for rank-dependent coefficients. We interpret $\gamma_0$ as the slowest relaxation timescale of the model. With the calibrated reaction term, the two endpoint timescales differ by two orders of magnitude: $\ln 2 / |\tilde f'(0)|$ is about \RelaxHalfBottomMonths{} months at the bottom of the market, $\ln 2 / |\tilde f'(1)|$ is about \RelaxHalfTopYears{} years at the top (bootstrap bands \BootHalfBotLoMonths{}--\BootHalfBotHiMonths{} months and \BootHalfTopLoYears{}--\BootHalfTopHiYears{} years; see Subsection~\ref{subsec:reaction}), and $\gamma_0$ is the slower of the two. This yields a \emph{two-timescale heuristic}: each end contributes one branch of the essential spectrum, so a deviation concentrated near one end is expected to decay at the rate of that end: within months at the bottom, and over decades at the leading edge. 
In practice, this asymmetry tells us where the data may be read as if in steady state. On decade horizons, the bulk of the distribution is relaxed while the leading edge is not; hence the calibrations of Section~\ref{sec:wave} are era-local statements about the bulk, and persistent deviations should be found at the leading edge (see Subsection~\ref{subsec:growth-acct}).
\end{remark}

\subsection{Comparisons with Known Cases}
\label{subsec:beyond}

We compare our bistable reaction-diffusion equation with two other known cases: the monostable equation corresponding to the 2000s, and the drift-stabilized models of \cite{jourdain_propagation_2013, jourdain_capital_2015,shkolnikov_large_2012}.

\paragraph{The Monostable Case.} 
As determined by the calibration in Subsection~\ref{subsec:reaction}, in the 2000s, exits outpaced entries and $\tilde f$ fell below zero at essentially every rank; we idealize the decade as monostable, with $\tilde f < 0$ on $(0,1)$, $\tilde f'(0) < 0$, and $\tilde f'(1) > 0$. 
The derivation of the wave identity \eqref{eq:wave-identity-int} uses only a strictly increasing $C^2$ profile with limits $0$ and $1$ and decaying ends; it never uses the sign of $\tilde f$. Hence the shape of the capital distribution curve may still be inferred from \eqref{eq:wave-identity-int}, and for each admissible speed the profile is still unique up to translation \cite[Theorem 8.7(b)]{gilding_travelling_2004}. With $\tilde f'(1) > 0$, the top end state is a node rather than a saddle, and the equation no longer admits a unique speed. The saddle-to-saddle connection of Theorem~\ref{thm:wave} fixes the speed because it is a codimension-one coincidence in the phase plane; a saddle-to-node connection instead survives perturbations of $c$. In the latter case, wavefronts exist for a closed half-line of speeds $[c^*, \infty)$ rather than for a single speed (see \cite[Theorems 8.2 and 8.3(ii)]{gilding_travelling_2004}; nonemptiness of the speed set reduces to a one-line test-function check). Therefore, the coefficients no longer determine the speed or the tail, and initial data with heavier tails are expected to select faster fronts with heavier tails.
Stability degenerates as well: with $\tilde f'(1) > 0$ the essential spectrum of the linearization enters the right half-plane (the equality at the edge in \eqref{eq:ess-spec} does not use the signs of the slopes), so no single front attracts all CDF data uniformly, and deviations at the top of the market are unresolved for a monostable reaction term.
Nevertheless, one observation holds for the whole family of monostable fronts. With $\tilde f \leq 0$, the reaction term lowers the normalized CDF at every level, so every quantile of the surviving population climbs, and the median can rise while the typical firm at every rank stagnates. This is survivorship rather than growth. The speed identity makes it quantitative: every admissible front has $c \geq B(1)$ and outruns the average firm. The 2000s entry in Figure~\ref{fig:growth-decomp} below shows exactly this.

\paragraph{Drift-Stabilized Models.}

Setting $\tilde f = 0$ in \eqref{eq:pde-general} yields the Cauchy problem of \cite{jourdain_propagation_2013, jourdain_capital_2015,shkolnikov_large_2012}. 
As discussed in Remark~\ref{rem:atlas}, we do not pursue the same degeneracy of $\sigma$, as we assume uniform ellipticity. In exchange, the wave theory of Theorem~\ref{thm:wave} covers the full rank-dependent class, the convergence in Theorem~\ref{thm:mean-field} is uniform, and Proposition~\ref{prop:rate} gives a rate; we use all three in Sections~\ref{sec:wave} and \ref{sec:diversity} below.
The law of large numbers of \cite{shkolnikov_large_2012} starts the system at stationarity: the initial spacings are drawn from their stationary law, which the strictly decreasing drift provides through the theory of reflected Brownian motion recalled in Subsection~\ref{subsec:background}.
Equilibration in \cite{jourdain_propagation_2013, jourdain_capital_2015} requires the chord condition: the equilibrium exists when $\Gamma(u) := \int_0^u \tilde b$ satisfies $\Gamma(u) > u\,\Gamma(1)$ on $(0,1)$, so that the cumulative drift lies above its chord, the continuum form of the stability condition for the finite Atlas model \cite{banner_atlas_2005}. In the setting of \eqref{eq:wave-identity-int}, the chord condition yields positivity of the flux $\varphi$, and their market growth rate $\Gamma(1)$ is the speed \eqref{eq:wave-speed} evaluated at $\tilde f = 0$; these are the entropy condition and the Rankine--Hugoniot shock speed of the underlying conservation law (for background, see \cite[Section 3.4]{evans_partial_2010}). With turnover, the speed deviates from $\Gamma(1)$ by the turnover term of \eqref{eq:wave-speed}.

Our long-term behaviour comes from the bistable reaction term rather than the drift. We read this as a third stabilization mechanism for the capital distribution, alongside the drift-stabilized models and the volatility-stabilized markets of \cite{fernholz_relative_2005, pal_analysis_2011}, in which the volatility of small firms grows large. 
In contrast with \cite{jourdain_propagation_2013, shkolnikov_large_2012}, we require neither monotonicity of the drift, nor affine volatility, nor the chord condition; we do require uniform ellipticity, which their existence theory does not \cite[Proposition 4.1]{jourdain_propagation_2013}. Turnover also does away with their remaining hypothesis: $\tilde f'(e) < 0$ makes the roots of \eqref{eq:char-poly} real and of opposite sign, so the flux vanishes linearly at both ends regardless of the drift, which without turnover must be imposed as the integrability condition (E2) of \cite{jourdain_capital_2015}. Subsection~\ref{subsec:drift-calib} below recovers a drift curve from the wave identity and finds the chord condition violated at every rank, by up to \DriftChordDevMax{} per year; the naive alternative, the drift measured from surviving firms, violates it at \EOneFailShareMeas\% of ranks. With the empirically identified drift and no turnover, the drift-stabilized equilibrium does not exist at all (see also Table~\ref{tab:wave-variants}). 

The convergence comparison with \cite{jourdain_capital_2015} is two-sided. Both convergence theorems assume uniform ellipticity; theirs covers rank-dependent coefficients, while Theorem~\ref{thm:relaxation} is proved for constant coefficients, with Remark~\ref{rem:rank-dependent} stating our expectation for the rank-dependent case. For the semilinear case, we obtain convergence in the uniform norm with a rate. For $\tilde f = 0$, the mean is conserved in the moving frame: the conservation fixes the translate of the limit in \cite[Theorem 2.4]{jourdain_capital_2015}, and it leaves the essential spectrum of the linearization touching the origin, so the convergence proved in \cite{jourdain_propagation_2013} is in the Wasserstein distance and carries no rate in general (exponential rates are recovered there for constant diffusion and initial data close to equilibrium). Their flow is nevertheless a contraction: the Wasserstein distance between two solutions is nonincreasing, even with degenerate diffusivity \cite[Proposition 3.1]{jourdain_propagation_2013}. Since $W_1$ in one dimension is the $L^1$ distance between CDFs, this is $L^1$ contraction of the conservation-law semigroup, and it gives stability without a spectral gap. The reaction term changes both conclusions: it breaks the conservation of the mean, so the shift $y_0$ in Theorem~\ref{thm:relaxation} is determined by the initial data rather than by a conservation law, and it opens the gap \eqref{eq:gamma-zero}, which Theorem~\ref{thm:relaxation} and Proposition~\ref{prop:rate} turn into an exponential rate in the supremum norm, and which Subsection~\ref{subsec:growth-acct} tests as a timescale.

\section{The Capital Distribution as a Traveling Wave}
\label{sec:wave}

This section examines the capital distribution curve through the wave identity \eqref{eq:wave-identity-int}, which can be used either for prediction or calibration. Subsection~\ref{subsec:ranked-vol} compares the empirical curve with a predicted curve. We measure the volatility curve rank by rank, fit the intensities from event counts, and hold the drift constant; the shape of the wave is then fully determined, since a constant drift enters the traveling-wave equation only through the speed of the front. We compare the measured wave with the constant-volatility baseline, and with a variant that instead uses the naive rank-dependent drift measured from surviving firms; the two carry errors of the same size. Subsection~\ref{subsec:drift-calib} uses \eqref{eq:wave-identity-int} for calibration: we recover a rank-dependent drift using the observed shape, the volatility curve, and the turnover term. The recovered drift tightly calibrates the capital distribution curve together with its Pareto tail. Subsection~\ref{subsec:growth-acct} then tests the wave decomposition against the century of data: the growth of the market decomposes into translation of the wave plus entry, and the persistence of shape deviations follows the predicted timescales.

\subsection{The Predicted Wave}
\label{subsec:ranked-vol}

The calibrated reaction term of Subsection~\ref{subsec:reaction} is bistable, so by Theorem~\ref{thm:wave}, the long-run behaviour of our model is given by a unique wave. We identify the wave with the long-run capital distribution curve. The attraction of the wave is proved in the constant-coefficient case (Theorem~\ref{thm:relaxation}) and exhibited numerically in the rank-dependent case below; and in either case, the bound on the rate involves only the endpoint slopes of the reaction term (Proposition~\ref{prop:rate} and Remark~\ref{rem:rank-dependent}). The resulting economic interpretation is simple. In the long run, the capital distribution has a \emph{fixed shape} that translates along the log-capitalization axis at a constant speed: the market grows, but the profile of the distribution (how far the median firm is from the largest firms, how thick the lower tail is) does not change. This matches the empirical stability of the capital distribution documented by Fernholz \cite{fernholz_stochastic_2002}. 

\paragraph{Measurement Conventions.} We state our conventions for the measured inputs used in this section and the next. The scalar coefficients of the constant baseline are cross-sectional medians over firms over the window 1975--2024: $\tilde\mu$ from the mean monthly changes in log capitalization, and $\sigma$ from the realized volatilities of monthly log returns. Throughout, drifts are capitalization growth and volatilities are return volatilities; share issuance is lumpy and we put it with the drift rather than with the diffusion. We use an implicit Euler scheme with the coefficients lagged by one time step: a step of $0.1$ months, a uniform grid of $2000$ points spanning the January 1975 cross-section with margin, boundary values $0$ and $1$, and the January 1975 empirical CDF as initial condition. Profiles become capital distribution curves by reading quantiles at the final CRSP firm count.

The rank-dependent volatility curve is estimated from monthly log returns over 1975--2024: fifty equal rank bins, the standard deviation within each bin, and a natural cubic spline smoothing of the binned values, annualized by $\sqrt{12}$. The measured curve is used on ranks $s \in [0.02, 0.98]$ and extended constantly beyond; in particular, $\sigma^2(1)$ in the tail formula \eqref{eq:alpha-main} denotes the value at $s = 0.98$. Tail exponents are least-squares slopes of log weight on log rank over ranks $3$ through $100$, applied identically to data and model curves; the Hill estimator (a standard estimator of tail exponents, see \cite{hill_simple_1975}) on the same window gives \AlphaDataHill{} for the data and \AlphaRecWaveHill{} for the calibrated wave, agreeing with the slopes below. The quantile density $g$ is a Gaussian kernel estimate of the log capitalizations pooled over the final twenty-four months of the sample, each month centered at its cross-sectional median, smoothed by the same spline family. The smoothed curves are $C^1$ with $\min \sigma^2 > 0$, as Theorems~\ref{thm:mean-field} and \ref{thm:wave} require of the coefficients. In the decade panels below, the intensities and the volatility are re-estimated on each decade from twenty-five-bin histograms, the model curve is read at the decade's median firm count, and the target is the geometric mean of the decade's monthly capital distribution curves, truncated to the ranks covered in every month.

\paragraph{Constant Coefficients.}
With constant coefficients, the limiting equation for the normalized CDF $w$ in log coordinates is
\begin{equation}\label{eq:wave-pde}
    \pp_t w \;=\; \frac{\sigma^2}{2}\,\pp^2_{yy} w - \tilde\mu\,\pp_y w + \tilde f(w).
\end{equation}
Theorems~\ref{thm:wave} and \ref{thm:relaxation} apply. The wave speed decomposes into two parts: the growth rate $\tilde\mu$ of a typical firm, plus a turnover correction $c - \tilde\mu$ generated by the asymmetry of entries and exits.

Per the measurement conventions, the two scalar coefficients are $\tilde\mu = \MuTildeAnnual$ and $\sigma = \SigmaAnnual$ (annualized). Running the calibrated equation \eqref{eq:wave-pde} to long horizons settles the profile into its traveling shape (profile drift in Table~\ref{tab:wave-variants}). The front speed is $c = \WaveSpeedAnnual$ per year in log capitalization, against the measured typical-firm growth rate $\tilde\mu$. The difference $c - \tilde\mu = \TurnoverCorrAnnual$ per year is the turnover correction: the amount by which entry and exit advance the median of the distribution, over and above the growth of the firms themselves. However, the shape fails to match the data in the tails. With a single volatility parameter, the wave concentrates more weight in the top few firms, and less in the middle, than the data; the empirical capital distribution curve is close to a straight line over roughly 1000 ranks.

The constant-coefficient curve appears in Figure~\ref{fig:zerocalib} below, which compares the resulting capital distribution curve with the CRSP cross-section at the end of the sample.
We use the standard yardstick of stochastic portfolio theory: we plot the market weight of the $k$-th largest firm against its rank $k$, both on logarithmic scales (see \cite{fernholz_stochastic_2002}). By custom, the curve indexes firms with $k = 1$ as largest, while our model's rank quantile variable runs the other way, with $q = 1$ the largest. Each wave profile is converted into such a curve by reading off its quantiles at the same number of firms as in the final CRSP cross-section. Table~\ref{tab:wave-variants} collects the convergence and fit diagnostics for every variant, including the calibrated model of the next subsection. Following the practice of \cite{itkin_calibrated_2026}, we measure fit by the root-mean-square relative weight error over the top ranks.

\paragraph{Rank-Dependent Volatility.}
Next, we look at the shape of the wave with constant drift and rank-dependent volatility. The volatility curve is directly measurable: rank-binned monthly log returns give an essentially monotone curve, falling from \MeasVolBottom{} annualized for the smallest firms to \MeasVolTop{} for the largest. As discussed for a single Brownian motion in Subsection~\ref{subsec:background} (and see Remark~\ref{rem:drift-estimation} below for rank-based models), the drift is the ingredient the data pins down poorly. However, a constant drift enters the traveling-wave equation only through the speed of the front, since $\tilde b \equiv \tilde\mu$ appears in \eqref{eq:wave-ode} only through the difference $c - \tilde\mu$. Therefore, fixing the drift to any constant determines the shape of the wave using just the measured volatility curve and the fitted intensities: we call this the \emph{zero-calibration wave}.

We solve the quasilinear equation with these coefficients by an implicit scheme, with the coefficients lagged one time step, and run it to $t = 1200$ months. The solution settles into a traveling profile: recentered profiles at $t = 900$ and $t = 1200$ agree to within \ZCConv{} in sup norm (compare \CSConv{} in the constant-coefficient case). The resulting curve and diagnostics are presented in Figure~\ref{fig:zerocalib} and Table~\ref{tab:wave-variants}, respectively. The zero-calibration wave attains \ZCILHundred{} over the top 100 firms, against \CSILHundred{} for the constant-volatility baseline. The naive alternative to a constant drift is the measured rank-dependent drift: the rank-binned average month-over-month change in log capitalization of surviving firms, demeaned, since only its variation across ranks affects the shape. Substituting it for the constant gives \BMILHundred{} over the top 100 firms, an error of the same size. The survivorship bias of the naive measurement is still visible, concentrated at the bottom ranks (Figure~\ref{fig:drift-recovery}), and in Subsection~\ref{subsec:drift-calib} we invert \eqref{eq:wave-identity-int} to extract the rank structure that both models miss.

\begin{figure}[p]
\centering
\includegraphics[width=0.85\textwidth]{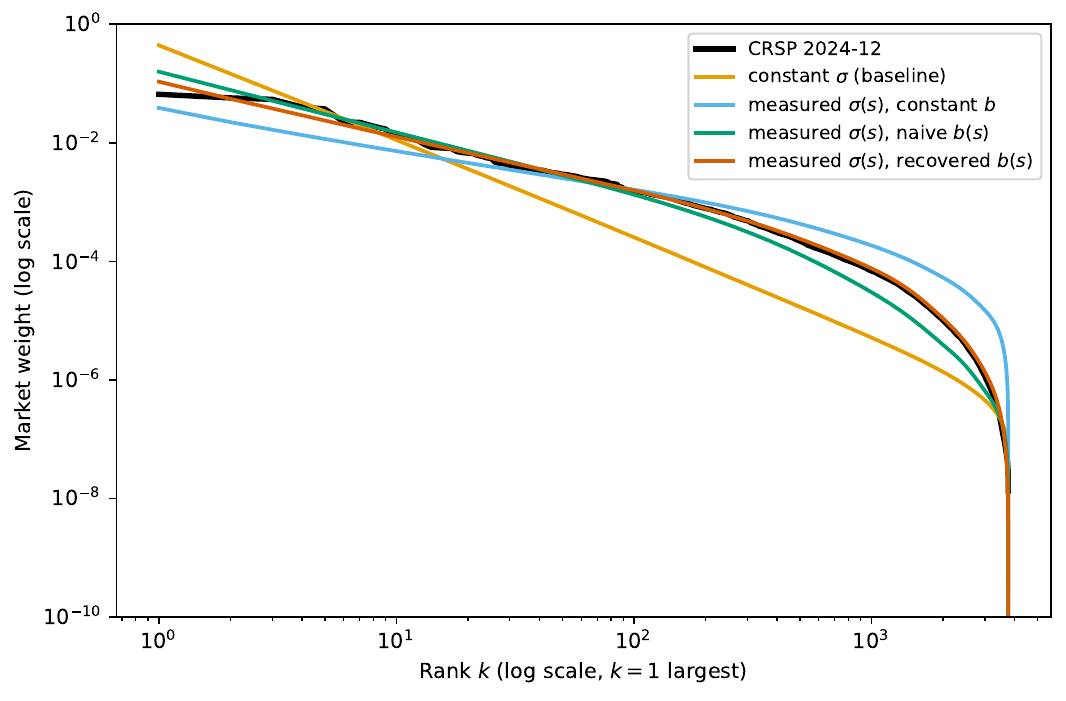}
\caption{Capital distribution curves of the traveling waves against the CRSP cross-section at the end of the sample, in order of increasing input: the constant-volatility baseline; the constant-drift \emph{zero-calibration }wave; the variant using the naive rank-dependent drift measured from surviving firms; and the wave with drift recovered from \eqref{eq:wave-identity-int}.}
\label{fig:zerocalib}
\end{figure}

\begin{table}[p]
\centering
\caption{Diagnostics for the traveling-wave variants. Profile drift is the sup-norm change of the recentered profile between $t = 900$ and $t = 1200$ months. Fit is the root-mean-square relative market-weight error against the CRSP cross-section; entries in parentheses are snapshots of the non-converging no-turnover solutions at $t = 1200$ and keep changing. The interquartile width in the data is \WidthData{} log units. The lower panel repeats the test decade by decade, with intensities and volatility re-estimated on each decade alone, under the constant drift zero-calibration wave and under the drift calibrated from each decade's own capital distribution.}
\label{tab:wave-variants}
\begin{tabular}{lcccc}
\toprule
Coefficients & Profile drift & \multicolumn{2}{c}{RMS relative weight error} & IQR width \\
\cmidrule(lr){3-4}
 & ($t = 900$ vs $1200$) & top 100 & top 1000 & at 100 years \\
\midrule
constant $\sigma$, constant $b$ & 0.0008 & 0.89 & 0.91 & 2.3 \\
measured $\sigma(s)$, constant $b$ & 0.0009 & 0.25 & 1.04 & 2.4 \\
measured $\sigma(s)$, naive $b(s)$ & 0.0042 & 0.19 & 0.41 & 3.6 \\
measured $\sigma(s)$, recovered $b(s)$ & 0.0020 & 0.11 & 0.09 & 3.7 \\
no turnover, measured $\sigma(s)$ & 0.0368 & (0.24) & (0.56) & 7.9 \\
no turnover, measured $\sigma(s)$, $b(s)$ & 0.0694 & (0.33) & (0.77) & 14.0 \\
\bottomrule
\end{tabular}
\\[8pt]
\begin{tabular}{lcccccccccc}
\toprule
Top-100 error & 1930s & 1940s & 1950s & 1960s & 1970s & 1980s & 1990s & 2000s & 2010s & 2020s \\
\midrule
constant $b$ & 0.50 & 0.28 & 0.15 & 0.20 & 0.27 & 0.27 & 0.60 & 0.32 & 0.31 & 0.31 \\
recovered $b(s)$ & 0.05 & 0.12 & 0.10 & 0.16 & 0.09 & 0.81 & 0.60 & 0.33 & 0.27 & 0.40 \\
\bottomrule
\end{tabular}

\end{table}

\paragraph{Comparisons.}
In Table~\ref{tab:wave-variants} we report two further checks. For the first, we remove turnover by setting $\lambda_b = \lambda_d = 0$. The solvers then never settle, whether the coefficients are constant, the volatility alone is rank-dependent, or the volatility and drift both are. In the constant case, the width grows like $\sqrt{\sigma^2 t}$ and the capital distribution curve flattens without limit. In the rank-dependent cases, the recentered profiles are still drifting an order of magnitude faster at $t = 1200$, and the interquartile width grows without bound. We conclude that the measured drift curve is too flat to stabilize the market on its own: with coefficients taken from the data, it is turnover that stabilizes the shape. For the second check, we repeat the comparison decade by decade (lower panel of the table). We refit the intensities and remeasure the volatility curve on each decade separately. The zero-calibration wave converges in every one of the \NDecAll{} decades (in the monostable 2000s the settled front is the one selected by the decade's initial data, see Subsection~\ref{subsec:beyond}), and its top-100 relative error against that decade's average capital distribution curve ranges from \DecErrMin{} to \DecErrMax{}, with a median of \DecErrMed. Hence a model with no calibrated shape parameters, refit on decade-length windows, stays within tens of percent of each era's capital distribution.

\subsection{Calibrating the Drift}
\label{subsec:drift-calib}

The wave identity \eqref{eq:wave-identity-int} links together the shape, the volatility, the reaction term, and the drift. These four quantities are not on the same statistical footing: the first three can be estimated at fast rates, while the drift cannot. The quantile density $g(s)$ is estimated from a short pool of recent cross-sections; $\sigma^2(s)$ from quadratic variation, with precision set by the sampling frequency; and $\tilde f$ from event counts. On the other hand, the drift can only be learned from the length of the observation window, as discussed for a single Brownian motion in Subsection~\ref{subsec:background}. A time-series estimate of a rank-binned drift carries a standard error of $\sigma(s)/\sqrt{T}$, which with volatilities up to \MeasVolBottom{} annualized is of the same order as the quantity being estimated, even over the full century. Moreover, an estimate from the growth of surviving firms conditions on survival, which biases the measurement exactly where exits concentrate (as seen in Subsection~\ref{subsec:ranked-vol}). Therefore, we treat the drift as the unknown in the wave identity, and we calibrate it from the other three quantities. Solving \eqref{eq:wave-identity-int} for the cumulative relative drift gives
\begin{equation}\label{eq:drift-integrated}
    \int_0^s \big(\tilde b(r) - c\big)\,\d r \;=\; B(s) - cs \;=\; \varphi(s) + \int_0^s \frac{\tilde f(r)}{g(r)}\,\d r.
\end{equation}
The front speed $c$ enters only as an additive constant. The identity therefore delivers the rank profile of the drift, relative to the speed of the front, from cross-sectional and quadratic-variation measurements alone. The one number which still requires a long time span is the speed $c$ itself. The speed identity \eqref{eq:wave-speed} does not supply it: $B(1)$ is itself a drift, and the identity is invariant under adding a common constant to $\tilde b$ and to $c$, so it determines only the turnover correction $c - B(1)$.

\begin{remark}[Drift estimation in rank-based models]\label{rem:drift-estimation}
In \cite[Sections 5.3--5.4]{fernholz_stochastic_2002}, the growth rates are deduced from the long-term behaviour of the collision local times, assuming convergence to stationarity. \cite{campbell_macroscopic_2025} construct a discrete-time analogue of these local times that accounts for firms switching ranks between consecutive time steps. Both constructions read the drift off the time axis: they require the ranked system to be near its stationary regime, and their precision grows with the length of the observation window. The pointwise wave identity \eqref{eq:wave-identity-main} is the mean-field, cross-sectional counterpart of the same idea: in the limit, the collision local time accumulated per unit time at level $s$ should converge to the diffusive flux $\varphi(s) = \tfrac{1}{2}\sigma^2(s)\,g(s)$, by the occupation-time formula. The identity then recovers the drift from that flux together with the shape and the turnover term, from a single cross-section rather than a long path of the ranked system (in practice, we pool a short window of months to stabilize the density estimate).
\end{remark}

The wave identity is available both pointwise \eqref{eq:wave-identity-main} and in integrated form \eqref{eq:drift-integrated}; we estimate the integrated form, to avoid differentiating an estimated curve, and extract the drift curve as the slope of a fit to $B$ in the same spline family as the measurement conventions. Figure~\ref{fig:drift-recovery} shows the result of the calibration. On the left, $B(s) - cs$ lies below its chord, with maximal deviation \DriftChordDevMax{} per year. Its slope rises from $\DriftRelBottom$ per year at $s = 0.12$ to $\DriftRelMid$ at the median rank and $\DriftRelTop$ at $s = 0.95$, so every rank drifts below the front speed. On the right, the raw pointwise identity carries ripples from differentiation, which the integrated form and its smoothed slope avoid. The naive survivor curve of Subsection~\ref{subsec:ranked-vol} is near the recovered drift except at the bottom ranks, where conditioning on survival hides the shrinkage of exit-bound firms. It is positioned by subtracting the empirical front speed, the trend \CEmpAnnual{} per year of the monthly median log capitalization over the same window; for comparison, the constant-coefficient wave of Subsection~\ref{subsec:ranked-vol} has speed \WaveSpeedAnnual{} per year.

\begin{figure}[t]
\centering
\includegraphics[width=\textwidth]{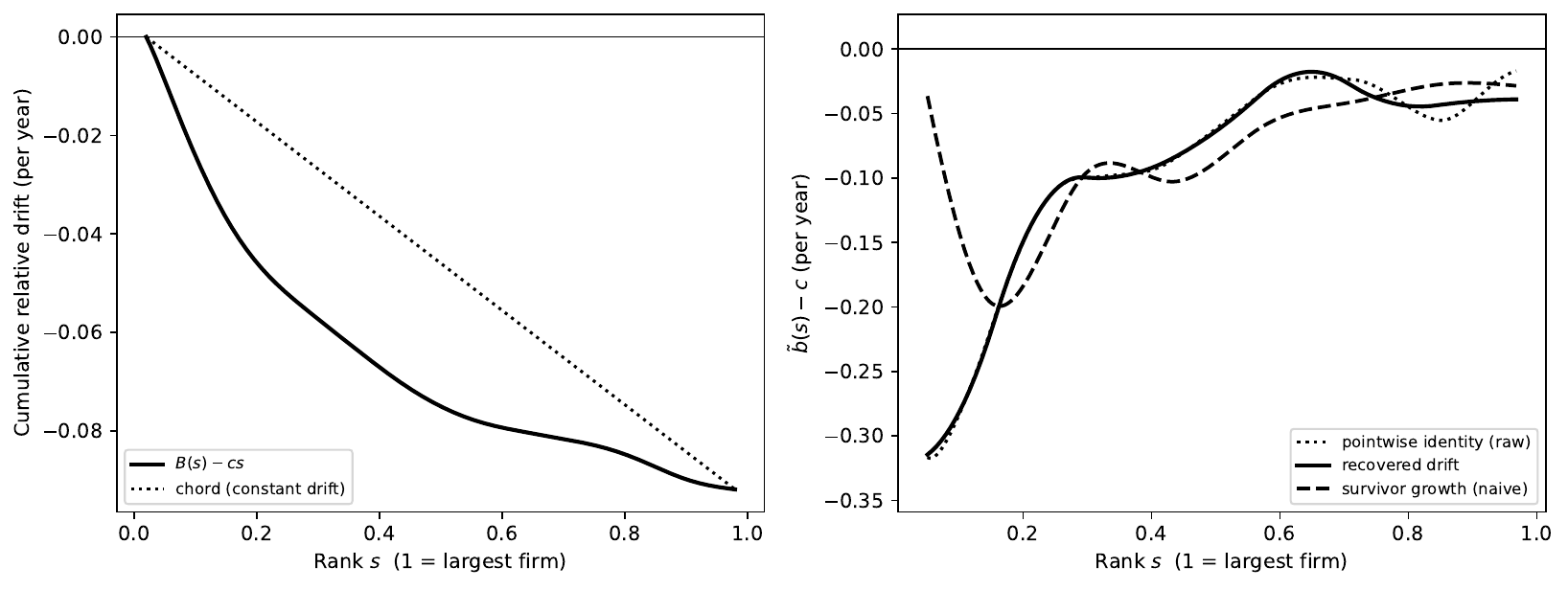}
\caption{Calibrating the drift from the wave identity. Left: the integrated form $B(s) - cs$ of \eqref{eq:drift-integrated} against the linear benchmark of a rank-independent drift. Right: the pointwise identity \eqref{eq:wave-identity-main} computed directly (dotted), the recovered drift from the slope of a spline fit to $B$ (solid), and the rank-binned capitalization growth of surviving firms (dashed). The volatility curve and survivor drift are measured over 1975--2024, and the quantile density over the final twenty-four months, as fixed in Subsection~\ref{subsec:ranked-vol}; the survivor curve is centered by the empirical front speed \CEmpAnnual{} per year.}
\label{fig:drift-recovery}
\end{figure}

As a final consistency check, we feed the recovered curve back into the solver. The resulting wave matches the CRSP capital distribution curve with relative error \RECILHundred{} over the top 100 firms and \RECILThousand{} over the top 1000 (see Figure~\ref{fig:zerocalib} and Table~\ref{tab:wave-variants}). Its tail nearly reproduces the empirical Pareto exponent of \eqref{eq:alpha-main}: over the top 100 ranks the calibrated wave gives $\alpha = \AlphaRecWave$, against \AlphaData{} in the data, where the zero-calibration wave gives \AlphaZCWave{}. The decade panel of Table~\ref{tab:wave-variants} repeats the test with each decade's own calibrated drift: the median top-100 error falls from \DecErrMed{} to \DecErrRecMed, with the largest gains in the early decades (\DecErrRecMin{} in the 1930s) and one clear miss in the 1980s.

The recovery inherits the assumption that the market is on its wave. The bulk of the recovered curve is insensitive to this assumption, since on the two-timescale reading of Remark~\ref{rem:clock} the bulk relaxes within months. The leading edge is not: a persistent deviation of the top ranks is absorbed into the recovered drift there, so the calibration is era-local at the top (Subsection~\ref{subsec:relax-sub}). The next subsection measures these deviations directly.

\subsection{The Wave and the Evolving Market}
\label{subsec:growth-acct}

We now test the wave decomposition with the century of data, through two model-agnostic measurements. The first is an exact growth accounting: total capitalization grows by translation of the front, by the entry of firms, and by deformation of the shape. If the market is moving as a traveling wave then the deformation term vanishes identically. Decade by decade the prediction largely holds, except in a handful of concentration episodes, where the shape term dominates. The relaxation timescales of Remark~\ref{rem:clock} make sense of the exceptions: the bulk of the market relaxes within months and cannot sustain a deviation, while the leading edge takes decades. Hence the episodes must live at the top, and the one era clearly apart from its wave, the 2020s, deviates at the top. The second measurement of this subsection tests the timescales themselves: the persistence of shape deviations, estimated rank by rank, rises from months at the bottom to decades in the top percent, in the order and on the scale the fitted intensities predict.

The wave speed $c$ is the growth rate of the \emph{median capitalization}. At any time,
\begin{equation*}
    \text{total cap} \;=\; n_t \,\times\, \text{median cap} \,\times\, \frac{\text{mean cap}}{\text{median cap}},
\end{equation*}
and taking logarithms turns the product into a sum whose derivative can be read term by term:
\begin{equation}\label{eq:growth-identity}
    \frac{\d}{\d t}\log(\text{total cap})
    \;=\; \underbrace{\frac{\d}{\d t}\log(\text{median cap})}_{\text{wave speed}}
    \;+\; \underbrace{\frac{\d}{\d t}\log n_t}_{\text{firm count}}
    \;+\; \underbrace{\frac{\d}{\d t}\log \frac{\text{mean cap}}{\text{median cap}}}_{\text{shape deformation}}.
\end{equation}
In the steady-wave regime, where the market moves along the wave of Theorem~\ref{thm:wave}, the third term vanishes: firms are added at rate $f(1)$, the wave translates at speed $c$, and the shape does not move. The identity itself involves no model coefficients. Volatility, whether constant or rank-dependent, enters only through the interpretation of the median term as the speed of a wave. Our benchmark for the shape is the zero-calibration wave of Subsection~\ref{subsec:ranked-vol}. The calibrated wave would be circular here: its drift is recovered from the observed shape, so deviations from it cannot measure deformation. Over the full century the wave picture is a good approximation: total capitalization grew at \GrowthTotalFull\% per year, of which \GrowthMedianFull\% is translation of the wave, \GrowthCountFull\% is growth in the number of firms, and only \GrowthShapeFull\% is shape deformation (the count term is a raw count, and the coverage expansions inside it are separated from economic entry below).

Figure~\ref{fig:growth-decomp} carries out the decomposition decade by decade. We note that the tall count bars of the 1960s and 1970s include the CRSP universe expansions of 1962 and 1972, which add firms without economic entry: the cleaned net entry rate of Subsection~\ref{subsec:data} is \NetGrowthAnnualPct\% per year, which separates the two. In most decades, the shape term is small and the market grows by translation and entry. The exceptions are the concentration episodes. Over the final ten years of the sample, 2015--2024, total capitalization grew at \GrowthTotalRecent\% per year while the median capitalization \emph{shrank} ($\GrowthMedianRecent\%$ per year); the shape term contributed \GrowthShapeRecent\%. On this reading, the market of the 2020s is off its wave at the leading edge. Provided the underlying coefficients remain unchanged (which does not hold decade to decade, as seen in Subsection \ref{subsec:reaction}; and see also the regime discussion in Section~\ref{sec:discussion}), the model does not suggest a quick return: leading-edge deviations decay on the top timescale, whose half-life is measured in decades, as we will see below.

\begin{figure}[htbp]
\centering
\includegraphics[width=0.86\textwidth]{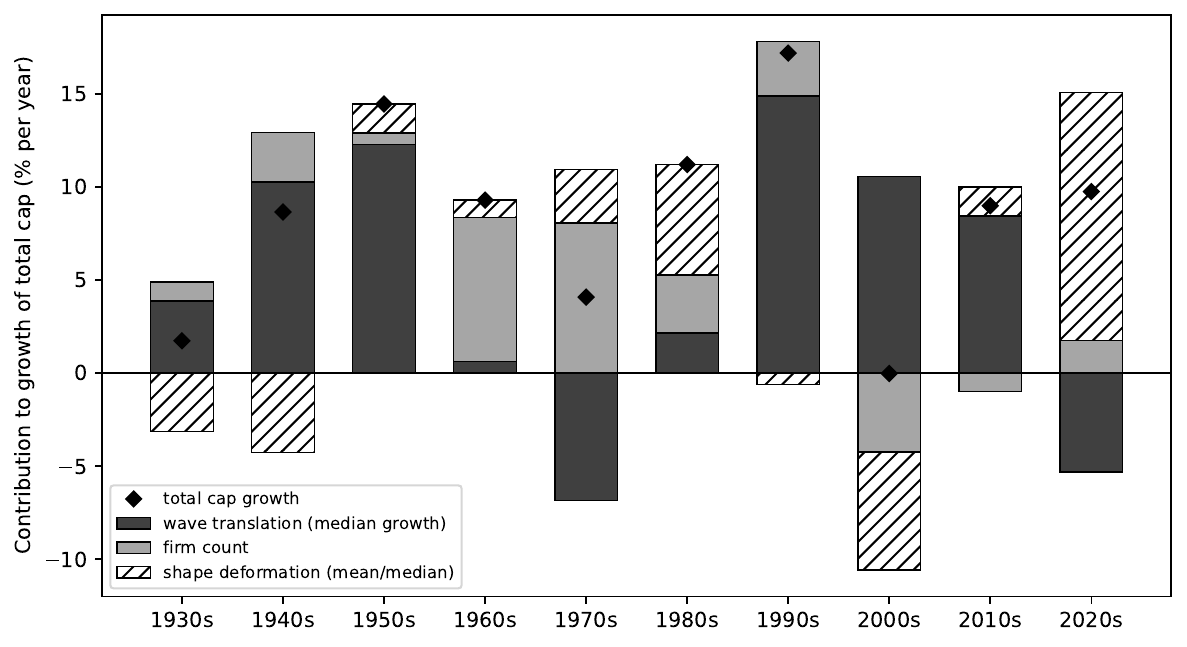}
\caption{The growth identity \eqref{eq:growth-identity} decade by decade: each bar decomposes the decade's growth of total market capitalization into translation of the wave (growth of the median capitalization), growth in the number of firms, and deformation of the shape; diamonds mark the total. Decades are calendar decades, with the 2020s bar covering 2020--2024. The 2020s are the only decade in which shape deformation exceeds the decade's total growth.}
\label{fig:growth-decomp}
\end{figure}

We recall which statements about the timescales are proved and which are heuristic. The proved statement concerns the global rate: it is bounded by $\gamma_0$, the edge of the essential spectrum, and with the calibrated reaction term, $\gamma_0$ is set by the top slope $\tilde f'(1)$ with half-life \RelaxHalfTopYears{} years. The two-timescale heuristic of Remark~\ref{rem:clock} goes further and splits the market: reading the two branches of the essential spectrum separately assigns the bottom of the market a timescale of \RelaxHalfBottomMonths{} months, from $\tilde f'(0)$, and the top a timescale of \RelaxHalfTopYears{} years, from $\tilde f'(1)$. Both involve only the endpoint slopes of the reaction term, so we expect them to be insensitive to the rank dependence of the diffusion and drift (Remark~\ref{rem:rank-dependent}). On decade horizons, then, the bulk of the distribution relaxes (evidenced by Table~\ref{tab:wave-variants}) while the leading edge is unresolved (concentration episodes above).
As a final test, we measure the persistence of shape deviations directly: centered quantiles decorrelate with a half-life of about \RelaxEmpBottomDemeanMonths{} months at the bottom of the market, and of decades in the top percent (\RelaxEmpTopYears{} years, against \RelaxHalfTopYears{} calibrated), in the order and on the scale predicted. We caveat that a century of data contains barely two half-lives of the slow mode.\footnote{Half-lives are exponential fits to autocorrelations of the monthly $s$-quantile of log capitalization minus its cross-sectional median; the bottom band is the $s = 0.05$ quantile and the top the $s = 0.99$ quantile. The bottom estimate is decade-demeaned on the post-1973 universe, so that slow drift in the intensity levels does not show up as a slow timescale (the filter is transparent at the bottom's predicted months scale). Each band also carries the other filter's reading: the raw bottom series has a \RelaxEmpBottomYears-year component, which we attribute to those moving levels (Subsection~\ref{subsec:reaction}), and decade-demeaning the top collapses it to \RelaxEmpTopDemeanMonths{} months by construction, since the filter removes anything slower than a decade; hence the top is read raw. The slow fits extrapolate beyond the 180-month lag window, and the corroboration is not an independent test: the persistence is measured on the century of data, which contains the half-century that fitted the intensities.}

\section{Growth of Diversity-Weighted Portfolios}
\label{sec:diversity}\label{sec:portfolio}

In this section, we use the wave to determine the capitalization growth of simple portfolio rules from stochastic portfolio theory in closed form. For the market moving along the wave, Proposition~\ref{prop:portfolio} gives the growth rate of any diversity-weighted portfolio below the critical diversity index as an explicit integral against the profile. The wave identity eliminates the drift from the integral, and turnover enters as a drag whose size is set by the fitted intensities. We evaluate the formula with the measured inputs for the classical rules, and test it decade by decade against the realized growth of the market portfolio. Diversity itself becomes an estimated functional of the model rather than an assumption. The fifty-year calibrated market is diverse by a thin margin, and the margin vanishes on the coefficients of the most recent fifteen years.

\subsection{Diversity-Weighted Portfolios}

For $p \geq 0$, the \emph{$p$-diversity-weighted portfolio} holds each firm in proportion to the $p$-th power of its capitalization: $p = 0$ is the equally weighted portfolio, $p = 1$ the market portfolio, and intermediate values interpolate between the two \cite[Section 3.4]{fernholz_stochastic_2002}. These are practical objects: massive amounts of money are held in funds that track capitalization-weighted and equal-weighted indices, and more specifically, a diversity-weighted portfolio with $p = 0.76$ was run commercially on the S\&P 500 \cite[Section 7.2]{fernholz_stochastic_2002}. We consider here the capitalization growth of these portfolios using the wave identity, adding consideration of turnover to the mean-field Atlas model of \cite[Section 5]{jourdain_capital_2015}. 
We follow the convention that entrants are bought at their entry weight, a self-financing rebalancing that does not move the portfolio's value. As we are tracking capitalization growth, dividends are not taken into account, and when a holding delists, the position is removed with no recovery (the differences for the total return case are collected in Remark~\ref{rem:total-return}). Throughout, a \emph{rule} is a weighting scheme, the map from a cross-section to portfolio weights, and a \emph{portfolio} is the position it produces. The observable we compute for each rule is its capitalization growth rate.

For the model without turnover, \cite{jourdain_capital_2015} show that the long-run growth rate of the wealth of the $p$-diversity rule is the integral of the arithmetic growth rate $b = \tilde b + \sigma^2/2$ (the drift coefficient of Subsection~\ref{subsec:finite-system}) against the \emph{$p$-weighted capital measure}, the probability measure on ranks that weights rank $s$ by the $p$-th power of the capitalization. Along our wave, that measure is given by
\begin{equation*}
    \bar\Pi^p(\d s) \;=\; \frac{e^{p\,\Xi(s)}}{\bar Z(p)}\,\d s,
    \qquad \bar Z(p) \;=\; \int_0^1 e^{p\,\Xi(r)}\,\d r,
\end{equation*}
with $\Xi = V^{-1}$ the quantile function of Remark~\ref{rem:quantile-ode}. 
Because $1 - V$ decays at the exponential rate $\alpha$ of Corollary~\ref{cor:tail}, the weight satisfies $e^{p\,\Xi(s)} = (1-s)^{-(p/\alpha)(1 + o(1))}$ as $s \to 1$, so $\bar Z(p) < \infty$ when $p < \alpha$ and $\bar Z(p) = \infty$ when $p > \alpha$. Hence the critical diversity index of \cite{jourdain_capital_2015} is the tail exponent of the wave, corrected for turnover. Beyond this index, the measure collapses onto the largest firms (the supercritical phase of \cite{chatterjee_phase_2010, jourdain_capital_2015}; see Remark~\ref{rem:diversity-loss} below). On the window 1975--2024, the measured inputs yield $\alpha = \AlphaRecWave$ (with bootstrap band $[\BootAlphaLo, \BootAlphaHi]$, see Subsection~\ref{subsec:reaction}) compared to \AlphaData{} in the data. Every rule with $0 \leq p \leq 1$ is therefore subcritical.

We now derive the rule's capitalization growth rate along the wave. In the particle system, the rule holds the weight $\pi_i = (X^i_t)^p / \sum_j (X^j_t)^p$ in firm $i$. Between events, It\^o's formula gives the drift of the portfolio's log value as $\sum_i \pi_i\, b(q_i) - \tfrac{1}{2} \sum_i \pi_i^2\, \sigma(q_i)^2$, as in the fixed universe \cite[Section 5]{jourdain_capital_2015}. An exit costs the portfolio its weight in the exiting firm: the log value jumps by $\log(1 - \pi_i)$, at rate $\lambda_d(q_i)$. Entries move the weights but not the value, by the convention above. Along the wave, the weighted empirical measure $\sum_i \pi_i\, \delta_{q_i}$ converges to $\bar\Pi^p$, and for $p < \alpha$ every individual weight vanishes by the finiteness of $\bar Z(p)$. Hence the quadratic It\^o correction and the gap between $\log(1 - \pi_i)$ and $-\pi_i$ vanish with the largest weight, leaving the growth rate
\begin{equation}\label{eq:gp-def}
    G^p \;=\; \langle b, \bar\Pi^p \rangle \;-\; \langle \lambda_d, \bar\Pi^p \rangle.
\end{equation}

\subsection{The Reduction Formula}

The proposition below reduces \eqref{eq:gp-def} to an explicit integral against the profile: substituting the wave identity for the drift and integrating the flux by parts leaves the front speed and the cross-sectional ingredients. This argument is not tied to power weights: for any rank-based weight density, the same integration by parts gives the rule's growth rate, with the derivative of the weight replacing the factor $p$ (compare the portfolio generating functions of \cite[Chapters 3 and 4]{fernholz_stochastic_2002} and \cite{karatzas_trading_2017}).

\begin{proposition}\label{prop:portfolio}
In the setting of Theorem~\ref{thm:wave}, let $\alpha$ be the tail exponent \eqref{eq:alpha-main} of Corollary~\ref{cor:tail}, and let $0 \leq p < \alpha$. Then
\begin{equation}\label{eq:reduction}
    G^p \;=\; c \;+\; (1 - p)\, G^p_* \;+\; \Big\langle \frac{\tilde f}{g},\, \bar\Pi^p \Big\rangle \;-\; \big\langle \lambda_d,\, \bar\Pi^p \big\rangle,
    \qquad
    G^p_* \;:=\; \tfrac{1}{2}\, \langle \sigma^2, \bar\Pi^p \rangle,
\end{equation}
where $G^p_*$ is the portfolio's excess growth rate. With $\tilde f = 0$ this is the reduction formula of \cite[Proposition 5.5]{jourdain_capital_2015}.
\end{proposition}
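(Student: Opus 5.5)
The plan is to start from the definition \eqref{eq:gp-def}, $G^p = \langle b, \bar\Pi^p\rangle - \langle \lambda_d, \bar\Pi^p\rangle$, and to remove the drift $b = \tilde b + \sigma^2/2$ using the pointwise wave identity \eqref{eq:wave-identity-main} of Theorem~\ref{thm:wave}(iii). That identity gives $\tilde b(s) = c + \varphi'(s) + \tilde f(s)/g(s)$ on $(0,1)$. Integrating against $\bar\Pi^p$ yields
\begin{equation*}
\langle b, \bar\Pi^p\rangle \;=\; c + \langle \varphi', \bar\Pi^p\rangle + \Big\langle \frac{\tilde f}{g}, \bar\Pi^p\Big\rangle + G^p_* .
\end{equation*}
Each pairing is finite. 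Indeed, $\bar Z(p) < \infty$ for $p < \alpha$, $\sigma^2$ is bounded, and $\tilde f/g$ is bounded: by Theorem~\ref{thm:wave}(ii) it has finite limits $\tilde f'(e)/(\text{decay rate})$ at the two ends. The integrability of $\varphi'$ follows once the other terms are known to be integrable.

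The key step is an integration by parts in the flux term:
\begin{equation*}
\int_0^1 \varphi'(s)\, e^{p\Xi(s)}\,\d s \;=\; \Big[\varphi\, e^{p\Xi}\Big]_0^1 \;-\; p\int_0^1 \varphi(s)\,\Xi'(s)\, e^{p\Xi(s)}\,\d s .
\end{equation*}
Since $\Xi' = 1/g$ and $\varphi = \tfrac12\sigma^2 g$, the product $\varphi\,\Xi'$ equals $\sigma^2/2$ exactly. After dividing by $\bar Z(p)$, the interior term is therefore $-p\,G^p_*$. Combining this with the display above gives $G^p = c + (1-p)G^p_* + \langle \tilde f/g, \bar\Pi^p\rangle - \langle\lambda_d,\bar\Pi^p\rangle$, which is \eqref{eq:reduction}. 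Setting $\tilde f = 0$ removes the turnover term, which recovers the formula of \cite[Proposition 5.5]{jourdain_capital_2015} as a consistency check.

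The main obstacle is showing that the boundary terms vanish, and I would do this with the endpoint asymptotics. To make the manipulation rigorous, I would integrate on $[\epsilon, 1-\epsilon]$ and let $\epsilon \to 0$.

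At $s = 0$: $\varphi(0) = 0$. For $p > 0$, in addition $e^{p\Xi(s)} \to 0$ as $\Xi \to -\infty$; for $p = 0$ the factor is $1$ and $\varphi(0)=0$ suffices.

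At $s = 1$: by Corollary~\ref{cor:tail}, together with the saddle linearization behind Theorem~\ref{thm:wave}(ii) (which also controls $V'$), the density at level $s$ satisfies $g(s) = -\frac{\d}{\d\xi}(1-V) = (1-s)^{1+o(1)}$. The tail estimate gives $e^{p\Xi(s)} = (1-s)^{-(p/\alpha)(1+o(1))}$. Hence
\begin{equation*}
\varphi\, e^{p\Xi} \;=\; (1-s)^{\,1 - p/\alpha + o(1)} \;\longrightarrow\; 0 ,
\end{equation*}
precisely because $p < \alpha$. The only asymptotic input needed is the logarithmic-sense rates stated after Theorem~\ref{thm:wave}, so no sharper expansion of the profile is required.
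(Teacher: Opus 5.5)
Your proposal is correct and is essentially the paper's proof: substitute the pointwise wave identity $\tilde b - c = \varphi' + \tilde f/g$, integrate the flux term by parts against $e^{p\Xi}$ using $\varphi\,\Xi' = \sigma^2/2$, and dispose of the boundary terms via $\varphi(0) = 0$ at the bottom and, at the top, via the linear vanishing of the flux against $e^{p\Xi(s)} = (1-s)^{-(p/\alpha)(1+o(1))}$ with $p < \alpha$ strict. One small inconsistency with your closing claim: boundedness of $\tilde f/g$, which you (like the paper) attribute to Theorem~\ref{thm:wave}(ii), needs the ratio limit $V'/(V-e) \to r$ from the tangency of the saddle's stable manifold, not just logarithmic rates; but logarithmic rates alone already make $\tilde f/g$ integrable against $\bar\Pi^p$, and integrability is all your argument actually uses.
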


\begin{proof}
Since $b = \tilde b + \sigma^2/2$, it suffices to show $\langle \tilde b - c, \bar\Pi^p \rangle = -p\, G^p_* + \langle \tilde f/g, \bar\Pi^p \rangle$. Substitute the wave identity \eqref{eq:wave-identity-main}, $\tilde b - c = \varphi' + \tilde f/g$, and integrate the flux term by parts against $e^{p\Xi}$: using $\Xi' = 1/g$ and $\varphi/g = \sigma^2/2$,
\begin{equation*}
    \int_0^1 \varphi'\, e^{p\Xi}\,\d r
    \;=\; \big[\varphi\, e^{p\Xi}\big]_0^1 \;-\; p \int_0^1 \frac{\varphi}{g}\, e^{p\Xi}\,\d r
    \;=\; -\,p \int_0^1 \frac{\sigma^2}{2}\, e^{p\Xi}\,\d r.
\end{equation*}
The boundary terms vanish: at $s = 0$ the flux vanishes while $e^{p\Xi}$ stays bounded; at $s = 1$ the flux vanishes linearly by Theorem~\ref{thm:wave}(ii)--(iii), while $e^{p\,\Xi(s)} = (1-s)^{-(p/\alpha)(1 + o(1))}$ and $p < \alpha$ is strict. The ratio $\tilde f / g$ is bounded, with endpoint limits $\tilde f'(e)$ divided by the decay rate of $V'$ at $e$, again by Theorem~\ref{thm:wave}(ii), so every integral above converges. Dividing by $\bar Z(p)$ finishes the proof.
\end{proof}

Figure~\ref{fig:portfolio} applies the formula for $p\in[0,1]$ with inputs measured directly from the data. We use the rank-dependent volatility curve of Subsection~\ref{subsec:ranked-vol}, the fitted intensities of Section~\ref{sec:empirics}, and $\bar\Pi^p$ computed from the monthly cross-sections, all on the same window. The wave identity has eliminated $\tilde b$, so differences across $p$ are zero-calibration quantities in the sense of Subsection~\ref{subsec:ranked-vol}.

The figure gives an answer to the portfolio selection question of \cite[Section 5.6]{jourdain_capital_2015} on the half-century window: which $p$-diversity rule is optimal, given the volatility structure of the market? The stationary capital distribution of that theory does not exist for the measured coefficients (Subsection~\ref{subsec:beyond}), so the question could not previously be evaluated on data. Without turnover, a variance curve decreasing in rank makes $G^p$ decreasing in $p$, and the equally weighted portfolio is then optimal in capitalization growth among diversity-weighted rules \cite[Conclusion (C1)]{jourdain_capital_2015}. Turnover adds a force in the other direction: the exit intensity $\lambda_d$ falls by an order of magnitude from the bottom of the distribution to the top, so the exit drag reduces as $p$ grows. With the calibrated inputs, the volatility term dominates at every $p$. The curve is decreasing, and the equally weighted portfolio remains the optimal rule.

This verdict rests on the diversity of the calibrated market, which is a functional estimated by the model, rather than an assumption. By Corollary~\ref{cor:tail}, $\alpha > 1$ exactly when $b(1) - c < -\tilde f'(1)$. In words, the excess growth of the largest firms over the front must stay below the restoring rate of the reaction at the top. Recall that $\alpha = \AlphaRecWave$, so the market portfolio sits just inside the subcritical phase. The calibrated index is therefore consistent with the diversity hypothesis of stochastic portfolio theory, under which no single firm's market weight approaches one (\cite{fernholz_diversity_2005}; \cite{pal_geometry_2016} for the converse). However, diversity holds only barely; Remark~\ref{rem:diversity-loss} exhibits windows of failure.

\begin{figure}[t]
\centering
\includegraphics[width=0.82\textwidth]{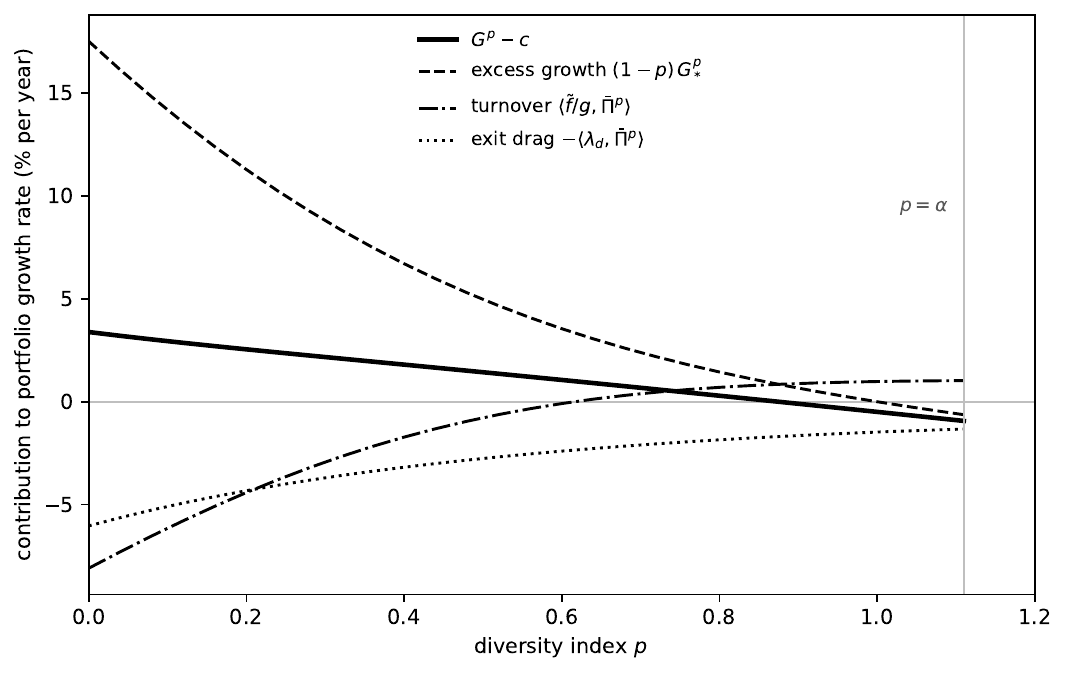}
\caption{Growth rates of $p$-diversity-weighted portfolios relative to the front speed: the reduction formula \eqref{eq:reduction} evaluated with the measured inputs, split into the excess-growth term, the turnover term, and the exit drag. The curves end at the critical diversity index $p = \alpha$ (vertical line), beyond which the mean-field formula fails (Remark~\ref{rem:diversity-loss}).}
\label{fig:portfolio}
\end{figure}

At $p = 0$, the turnover term is $B(1) - c$ by the speed identity \eqref{eq:wave-speed}, so $G^0 = \int_0^1 b(r)\,\d r - r_d$. In other words, the equally weighted portfolio gains the arithmetic growth rate averaged evenly across ranks, minus the aggregate exit rate. In numbers, the excess-growth term is $\tfrac{1}{2}\int_0^1 \sigma^2(r)\,\d r = \PortEWExcess\%$ per year; the average drift of firms relative to the front (Subsection~\ref{subsec:drift-calib}) is $\PortEWTurn\%$ on the trimmed rank grid of the measurement conventions (Subsection~\ref{subsec:ranked-vol}); and the exit rate is \PortEWDrag\%. The sum leaves the equally weighted portfolio \PortEWRel\% per year above the front.

At $p = 1$, the excess-growth term vanishes with the factor $1 - p$. The turnover terms simplify by an exact integration: substituting $s = V(\xi)$ and using $\tilde f' = \lambda_b - \lambda_d - f(1)$, the derivative
\begin{equation*}
    \frac{\d}{\d\xi}\big[\tilde f(V)\, e^{\xi}\big] \;=\; \big(\lambda_b(V) - \lambda_d(V) - f(1)\big)\, V'\, e^{\xi} \;+\; \tilde f(V)\, e^{\xi}
\end{equation*}
integrates to zero over $\R$, because $\tilde f(V)\, e^{\xi}$ vanishes at both ends. Notice that this is where $\alpha > 1$ enters. Hence $\langle \tilde f/g, \bar\Pi^1 \rangle + \langle \lambda_b - \lambda_d, \bar\Pi^1 \rangle = f(1)$, and \eqref{eq:reduction} at $p = 1$ becomes
\begin{equation}\label{eq:dilution}
    G^1 \;=\; c \;+\; f(1) \;-\; \langle \lambda_b, \bar\Pi^1 \rangle.
\end{equation}
Recall from the growth identity \eqref{eq:growth-identity} that total capitalization grows at $c + f(1)$ along the wave. Unlike the market portfolio, total capitalization gains the capital of entrants. Hence the market portfolio lags the growth of total capitalization by exactly the capital-weighted entry rate $\langle \lambda_b, \bar\Pi^1 \rangle$. In other words, the firm-count term of the growth identity is growth that a self-financing investor cannot capture. In numbers, the turnover term and the drag partially offset ($\PortMktTurn$ against $\PortMktDrag$), so the market portfolio grows near the speed of the front: $G^1 - c = \PortMktRel\%$ per year. Entry adds \PortEntryCountPct\% of firms per year but only \PortMktDilution\% of capital. The spread between the classical equal-weight and market portfolios is \PortEWMinusMkt\% per year. This is far below the \PortEWExcess\% suggested by the excess-growth term for $p=0$. Rebalancing toward the small-firm ranks gains the excess growth rate of the variance curve, but turnover takes most of it back through the sub-front drift of the overweighted ranks, and through their exit rates.

\begin{remark}[Loss of diversity]\label{rem:diversity-loss}
Without turnover, a diversity rule with $p$ above the critical index effectively holds the largest firm \cite[Remark 5.6]{jourdain_capital_2015}. In our model, the derivation of \eqref{eq:gp-def} fails there. For $p > \alpha$, the $p$-th powers of the capitalizations have tail exponent $\alpha/p < 1$, so in a finite sample drawn from the wave profile, the largest holding is a random positive fraction of the portfolio, and the law of that fraction depends on the wave only through $\alpha/p$ (the supercritical phase of \cite{chatterjee_phase_2010}); this is the sampling threshold discussed after Corollary~\ref{cor:tail}, applied for general $p$ rather than at the market weights. The portfolio is then no longer a mean-field object: the weighted empirical measure keeps an atom, and the exit charge $\log(1 - \pi_i)$ can no longer be linearized. The concentrated rule earns the growth rate of the largest firm while carrying that firm's exit risk. When we refit the intensities on the most recent fifteen years, 2010--2024, the reaction term stays bistable, but its top slope weakens to $\tilde f'(1) = \RecentFifteenFPrimeTopAnnual\%$ per year, so the half-life at the top lengthens from the fitted \RelaxHalfTopYears{} years to \RecentFifteenHalfTopYears{}. At the same volatility and drift, the tail formula returns $\alpha = \RecentFifteenAlpha$. Hence, on the coefficients of the concentration era itself, the market sits at the supercritical boundary. Windows reaching back into the 2000s lose the bistable structure altogether (the stronger failure of Subsection~\ref{subsec:beyond}).
\end{remark}

\begin{remark}[Caveats for the total return case]\label{rem:total-return}
Three caveats separate our analysis from an investor's total return. First, the coefficient $b$ is capitalization growth rather than total return: dividends sit outside the model, and share issuance enters the measured drift but not a shareholder's return. Second, the model takes the exit loss in full, while merger targets (\PctExitMerger\% of exits) are typically bought out at a premium and delistings for cause carry large negative returns \cite{shumway_delisting_1997, shumway_delisting_1999}. From the data, we see that an equally weighted exiting position recovers \PortRecEWCents{} cents on the dollar on average, and \PortRecDropCents{} cents for delistings for cause. Therefore, most of the assumed loss returns to the investor but not to the listed market. Third, we do not model transaction costs, and \eqref{eq:reduction} concerns the frictionless full-universe rules, whose rebalancing concentrates in the smallest and costliest ranks (see \cite{ruf_impact_2020}). The implementable top-set rules are treated in the next remark.
\end{remark}

\begin{remark}[Open markets and leakage]\label{rem:leakage}
Practical equal-weight rules restrict to a top set. For instance, we can consider a \emph{top-500 rule} (e.g.~the equal-weight S\&P 500), or a \emph{$85\%$-coverage rule} which holds the top set carrying $85\%$ of total capitalization (which is close in spirit to the MSCI USA Equal Weighted Index \cite{msci_inc_msci_2026}). For uniform weight on the ranks $[s, 1]$, the integration by parts in the proof of Proposition~\ref{prop:portfolio} leaves a boundary term,
\begin{equation*}
    G^0_{[s,1]} \;=\; c \;+\; \frac{1}{1-s} \lb( \int_s^1 \Big( \frac{\sigma^2(r)}{2} + \frac{\tilde f(r)}{g(r)} - \lambda_d(r) \Big) \,\d r \;-\; \varphi(s) \rb).
\end{equation*}
The new cost, $\varphi(s)/(1-s)$, is the diffusive flux through the membership boundary, using the formal mean-field reading of the collision local time at rank $s$ (as in Remark~\ref{rem:drift-estimation}). This is the \emph{leakage} of open-market portfolios identified in \cite[Example 4.3.5]{fernholz_stochastic_2002}. The membership trades are value-neutral, so the rule remains self-financing. The same integration by parts gives the growth $G^1_{[s,1]}$ of the capitalization-weighted $s$-coverage top set: its boundary term is the flux times the weight at the boundary, which is small for capitalization weighting by construction. In general, leakage is proportional to the weight a rule places at its own membership boundary, which is why the capitalization-weighted top set tracks its full-universe counterpart more closely than the equal-weight top set tracks its own.

Along the wave, a fixed level $s$ holds a fixed share of total capitalization, so the coverage rule is the mean-field object of the formula, while the top-500 rule's boundary moves whenever the number of firms changes.
We evaluate the top-500 rule and the $85\%$-coverage rule month by month at their own boundaries. The reduction formula's decomposition applies at a fixed level, so we can itemize the coverage rule's costs, but must report the top-500 rule as a single net number. Calibrated on 1975--2024, leakage is the dominant cost of top-set equal weighting: the coverage rule's excess growth of \PortTopCapExc\% per year is offset by \PortTopCapLeak\% of leakage and, with the smaller turnover terms, sits at \PortTopCapRel\% over the front. The top-500 rule also happens to sit at \PortTopRankRel\% over the front. We also note that the coverage rule's membership tracks the market's diversity: it held \PortTopCapCountStart{} firms on average in the late 1970s, peaked at \PortTopCapCountPeak{} in \PortTopCapPeakYear{}, and holds \PortTopCapCountLate{} in 2020--24.
\end{remark}

\subsection{The Market Portfolio over the Decades}

We now confront the simplified reduction formula for $p=1$ with the accounting of Subsection~\ref{subsec:growth-acct}, decade by decade. The bookkeeping of \eqref{eq:dilution} holds off the wave as well: the market portfolio grows as total capitalization does, minus the capital-weighted entry rate. Substituting the growth identity \eqref{eq:growth-identity} for the growth of total capitalization, the wave terms combine into the steady-state term, the deformation survives as the mean-over-median shape term, and CRSPs coverage growth enters as universe expansion. The decade accounting is therefore
\begin{equation}\label{eq:decade-ledger}
    \underbrace{\vphantom{\Big\langle}G^1 - c}_{\text{realized}}
    \;=\;
    \underbrace{\Big\langle \frac{\tilde f}{g} - \lambda_d,\, \bar\Pi^1 \Big\rangle}_{\text{steady-state term}}
    \;+\;
    \underbrace{\vphantom{\Big\langle}\frac{\d}{\d t}\log\frac{\text{mean cap}}{\text{median cap}}}_{\text{shape deformation}}
    \;+\; \text{universe expansion} \;+\; \text{residual},
\end{equation}
with every term in percent per year. Table~\ref{tab:port-acct} evaluates it on each decade. The realized row is $G^1 - c$ from those aggregates, with the front speed measured as the growth of the median capitalization. The steady-state term is the reduction formula \eqref{eq:reduction} at $p = 1$, computed with the decade's own intensities, quantile density, and capital weights. The shape term is measured as in Subsection~\ref{subsec:growth-acct}. Universe expansion is the growth of the raw CRSP firm count minus the cleaned net entry rate $f(1)$: the file's coverage expansions enter the realized side but are not economic entry. The residual is what remains: pure error, from the monthly discretization and from the estimated decade inputs.

\begin{table}[htbp]
\centering
\caption{The market portfolio against the front, decade by decade: the accounting \eqref{eq:decade-ledger}, in percent per year. Realized growth relative to the front is built from aggregates via \eqref{eq:dilution}; the steady-state term uses each decade's own inputs; the largest residual is \PortAcctResidMax.}
\label{tab:port-acct}
\setlength{\tabcolsep}{4pt}%
\begin{tabular}{lcccccccccc}
\toprule
\% per year & 1930s & 1940s & 1950s & 1960s & 1970s & 1980s & 1990s & 2000s & 2010s & 2020s \\
\midrule
realized $G^1 - c$ & $-2.8$ & $-2.4$ & $+1.5$ & $+7.5$ & $+10.4$ & $+7.6$ & $+0.3$ & $-12.1$ & $-0.5$ & $+14.0$ \\
steady-state term & $-0.1$ & $+1.0$ & $+0.4$ & $+1.2$ & $-0.7$ & $+1.7$ & $+0.9$ & $-4.8$ & $-1.2$ & $+1.0$ \\
shape deformation & $-3.2$ & $-4.3$ & $+1.5$ & $+0.9$ & $+2.9$ & $+5.9$ & $-0.6$ & $-6.4$ & $+1.5$ & $+13.3$ \\
universe expansion & $+0.4$ & $+0.9$ & $-0.6$ & $+5.1$ & $+8.2$ & $-0.1$ & $+0.0$ & $-1.0$ & $-0.9$ & $-0.4$ \\
residual & $+0.0$ & $+0.0$ & $+0.1$ & $+0.2$ & $-0.0$ & $+0.0$ & $-0.0$ & $+0.1$ & $+0.0$ & $+0.0$ \\
\bottomrule
\end{tabular}

\end{table}

We read Table~\ref{tab:port-acct} in two regimes. In the on-wave decades, the shape row is small and the steady-state term accounts for the realized excess. In the concentration decades the shape row carries the gap, and in the monostable 2000s the steady-state term captures the sign and about half the size of the realized shortfall. The residual row is the falsifiable part: no row is fitted to another, so the residual has no reason to be small unless the decade estimates are mutually consistent and the tail satisfies $\alpha > 1$. It stays within \PortAcctResidMax\% per year in every decade. We run this construction at $p = 1$ because the drift cancels from both sides, and because the market portfolio's value is a functional of the cross-section, so no rebalancing must be tracked. At $p = 0$ the prediction contains the average drift, which the data cannot supply independently.

\section{Discussion and Conclusion}
\label{sec:discussion}

\paragraph{Mergers as Transport.}
Our model treats a merger as a death, which correctly removes the firm but ignores where its capital goes. A more faithful model would add a merger kernel $\kappa(q, q')$ for absorption of rank $q$ by rank $q'$, with the acquirer jumping to the combined size. In the mean-field limit this adds a nonlocal transport term to \eqref{eq:pde}; finite systems of competing Brownian particles with splits and mergers of exactly this kind are constructed and analyzed in \cite{karatzas_diverse_2016}. Since mergers are \PctExitMerger\% of exits, this is the natural next refinement; the merger check of Subsection~\ref{subsec:fitting} shows that the effect on the death intensity's shape is modest, but the upward jumps of acquirers are absent from the present model.

\paragraph{Regime Switching.}
Figure~\ref{fig:f-decades} shows that the intensities factor, to a good approximation, into stable rank shapes modulated by time-varying levels. Letting these levels follow a hidden state would yield a regime-switching model. Such a model would seek to explain the deformation of the wave exhibited in Subsection~\ref{subsec:growth-acct}.
For a single permanent change in the coefficients, Theorem~\ref{thm:relaxation} and Remark~\ref{rem:rank-dependent} tell us that the market converges from the current cross-section to the new wave profile on the timescales of the new reaction term. For slowly varying intensities, the solution should track the wave of the instantaneous coefficients up to a lag set by the timescales of Remark~\ref{rem:clock}, largest at the leading edge, and the episode analysis of Subsection~\ref{subsec:growth-acct} would be accounted for via the tracking error. Furthermore, a regime model would imply dynamics for the diversity boundary from Section~\ref{sec:diversity}, with implications for portfolio rules, especially when the tail index sits at the supercritical boundary (Remark~\ref{rem:diversity-loss}).

\paragraph{Common Noise and Fluctuations.}
Theorem~\ref{thm:mean-field} is a law of large numbers, and the natural next step is to consider fluctuations: $\sqrt{n}$ times the difference between the empirical distribution and its limit. For rank-based diffusions without entry and exit this program is carried out in \cite{kolli_spde_2018}, where the global fluctuations converge to a Gaussian stochastic PDE, and the corresponding large deviations are established in \cite{dembo_large_2016}; with common noise the law-of-large-numbers limit is itself stochastic \cite{kolli_large_2019}, treated most recently in the pathwise entropy framework of \cite{shkolnikov_rank-based_2026}. Adding branching to these limits is a subject for future inquiry. 

\paragraph{Conclusion.}
We proposed a particle-system model of the capital distribution, with geometric Brownian firms entering and exiting at rank-dependent rates, and we proved that its empirical distribution converges, in the Kolmogorov distance and uniformly on compact time intervals, to the solution of an explicit reaction-diffusion equation. Calibrated on a half-century of CRSP data, the reaction term is bistable, and the long-run capital distribution is a traveling wave. With volatility measured rank by rank and no calibrated drift parameters, the wave tracks the empirical capital distribution curve in every decade of the sample, within tens of percent at the top ranks, and the growth accounting shows the market growing by translation and entry outside of concentration episodes. The wave identity recovers the drift, which time series cannot usefully identify. The endpoint slopes of the reaction term enter the tail exponent and set the relaxation timescales, and the persistence of shape deviations across the century matches these timescales in ordering and in scale. The recovered drift violates the chord condition of the drift-stabilized models at every rank; the calibrated market is turnover-stabilized. Evaluating diversity weighted portfolios along the wave, we see that the equally weighted portfolio gains the excess growth rate of the variance curve, but turnover takes most of it back. Market diversity is an output of our calibration rather than an assumption: the half-century market sits just inside the diverse phase, and the coefficients of the most recent fifteen years sit at its boundary.

\section*{Acknowledgments}

This work was supported by the Columbia College Class of 1939 Research Fellowship (Summer 2025). We thank Steven Campbell and David Itkin for feedback on an early draft, and Philipp Jettkant for insightful discussion.

\appendix

\section{Validating the Mean-Field Limit}
\label{sec:validation}

Theorem~\ref{thm:mean-field} is an asymptotic statement, and we verify that it describes systems of realistic size. Throughout this section we work in the constant-coefficient case of Corollary~\ref{prop:constant}, whose two scalar parameters are the most transparent to calibrate. We simulate the particle system of Subsection~\ref{subsec:finite-system} with the calibrated intensities and compare its empirical CDF with a finite-difference solution of the limiting PDE.

\paragraph{Setup.} We work in log coordinates and in months. The volatility and growth parameters are the scalar coefficients of the measurement conventions in Subsection~\ref{subsec:ranked-vol}: the cross-sectional median over firms of the mean monthly change in log capitalization, $\tilde\mu = \MuTildeMonthly$ per month, and of the realized volatility of monthly log returns, $\sigma = \SigmaMonthly$ per square-root month ($\SigmaAnnual$ annualized), both on 1975--2024. The initial condition is the empirical log-capitalization distribution in \InitMonth. The particle system starts from $n = \NMain$ particles and uses an Euler scheme with step $\Delta t = 0.05$ months. At each step, every particle moves by $\tilde\mu \Delta t + \sigma\sqrt{\Delta t}\, Z$, ranks are recomputed within the current population, and each particle gives birth with probability $\lambda_b(q_i)\Delta t$ and dies with probability $\lambda_d(q_i)\Delta t$. The PDE \eqref{eq:pde} is solved in log coordinates by a Crank--Nicolson scheme with the reaction term treated explicitly, on a grid padded so that the mass reaching the boundaries is negligible relative to the reported errors; the padding is six standard deviations of the accumulated diffusion plus the drift displacement, and the boundary values are held at $0$ and $1$.

\paragraph{Results.} At horizons of 5, 10, and 20 years, the normalized empirical CDF of the particle system and the PDE solution agree to within sup-norm distances of \SupErrSixty, \SupErrOneTwenty, and \SupErrTwoForty{}, respectively: fluctuations of the size the theorem predicts for $n = \NMain$, and too small to be visible in an overlay, so we report the numbers rather than the curves. The particle count tracks the deterministic mass curve $m(t) = e^{f(1)t}$ (Figure~\ref{fig:count-error}, left), ending at $\GrowthSim$ times its initial size against a predicted $\GrowthTheory$. Repeating the simulation across initial population sizes from \NSweepSmall{} to \NSweepBig{} (Figure~\ref{fig:count-error}, right), the sup-norm error at the 10-year horizon (each point the mean over ten simulations) falls from \SweepErrSmall{} to \SweepErrBig, consistent with the $n^{-1/2}$ rate expected from the martingale bounds in the proof.

\begin{figure}[htbp]
\centering
\begin{minipage}{0.49\textwidth}
\centering
\includegraphics[width=\textwidth]{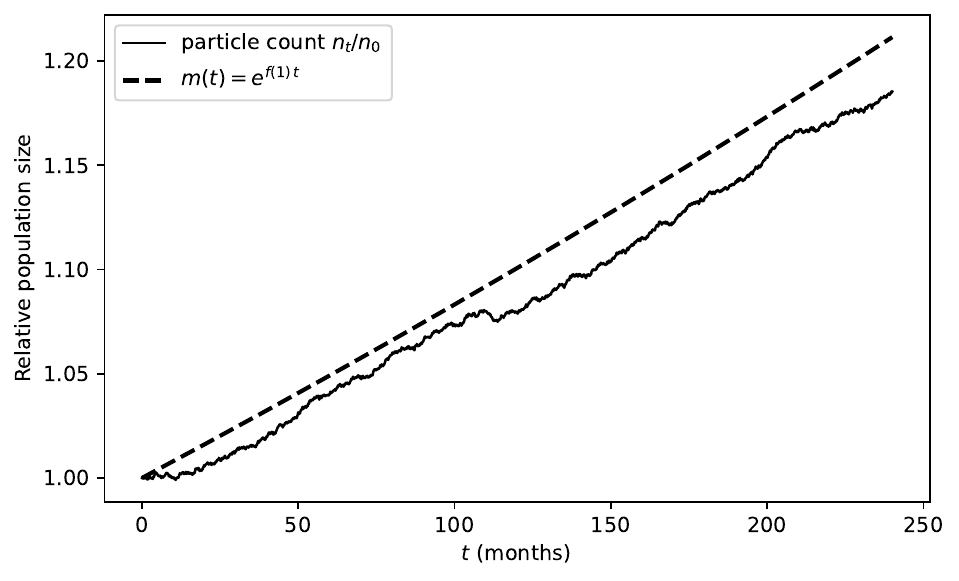}
\end{minipage}
\hfill
\begin{minipage}{0.49\textwidth}
\centering
\includegraphics[width=\textwidth]{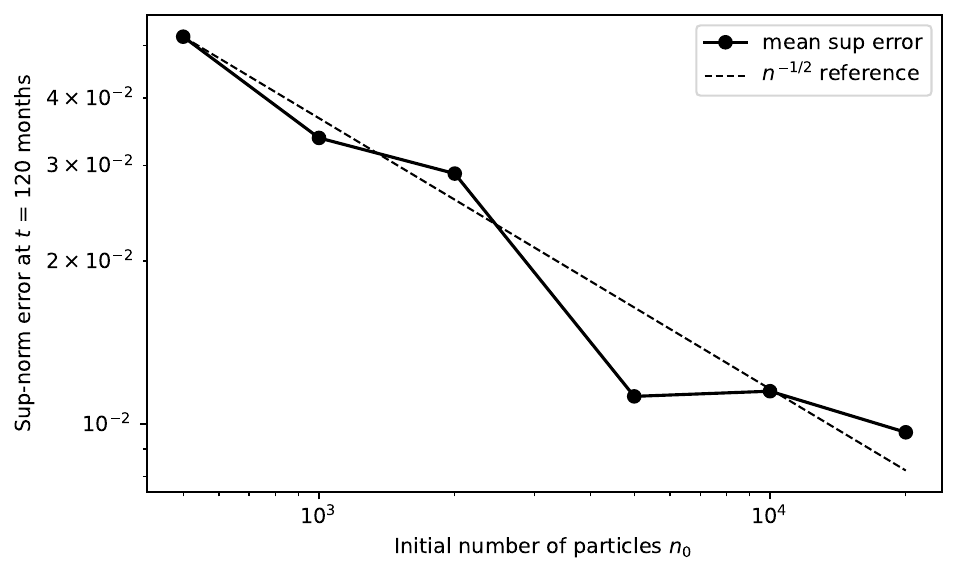}
\end{minipage}
\caption{Left: relative population size in the simulation against the deterministic limit $e^{f(1)t}$. Right: sup-norm distance between the particle system and the PDE at $t = 120$ months, as the initial population size varies; the dashed line is the $n^{-1/2}$ reference slope.}
\label{fig:count-error}
\end{figure}

\section{Proofs of the PDE Lemmata}
\label{app:pde-proofs}

\begin{proof}[Proof of Lemma~\ref{lem:uniqueness}]
Let $w_1, w_2$ be admissible solutions with the same initial CDF, write $a = \sigma^2/2$, and set $\Delta = w_1 - w_2$. Then $|\Delta| \leq 1$, $\Delta(0, \cdot) = 0$, and $\|\Delta(t, \cdot)\|_{L^1(\R)} = W_1(\pi^1_t, \pi^2_t)$, the Wasserstein distance between $\pi^1_t$ and $\pi^2_t$, is finite with $\int_0^T \|\Delta(t)\|_{L^1}\, \d t < \infty$, by the moment condition. Hadamard's formula $A(w_1) - A(w_2) = \bar a\, \Delta$ with $\bar a(t,y) = \int_0^1 a\big(\theta w_1 + (1-\theta) w_2\big)(t,y)\, \d\theta$, and likewise for $B$ and $\tilde f$, shows that $\Delta$ solves the \emph{linear} equation
\begin{equation*}
    \pp_t \Delta \;=\; \pp^2_{yy}\big(\bar a\, \Delta\big) - \pp_y \big(\bar b\, \Delta\big) + \bar c\, \Delta
\end{equation*}
in the sense of distributions, where $\bar a \in [\min a, \max a]$, $|\bar b| \leq \sup|\tilde b|$, and $|\bar c| \leq \mathrm{Lip}(\tilde f)$ are measurable. Testing against $g$ and using $\Delta(0, \cdot) = 0$, this gives
\begin{equation}\label{eq:dual-pairing}
    \int_\R \Delta(t)\, g(t)\, \d y \;=\; \int_0^t \!\!\int_\R \Delta\, \big[ \pp_s g + \bar a\, \pp^2_{yy} g + \bar b\, \pp_y g + \bar c\, g \big]\, \d y\, \d s
\end{equation}
for $g \in C_c^\infty([0,T] \times \R)$, and by mollification for every $g \in C^{1,2}$ that is bounded with two bounded derivatives and decays as $|y| \to \infty$ (compare \cite[Remark A.1]{jourdain_propagation_2013}). The identity holds at every $t$, not only almost every $t$: writing $\int_\R \Delta(t)\, g\, \d y = \int_\R \big( \int_z^\infty g(t, y)\, \d y \big)\, (\pi^1_t - \pi^2_t)(\d z)$, both sides of \eqref{eq:dual-pairing} are continuous in $t$ by the weak continuity of $t \mapsto \pi^i_t$.

Fix $t < T$ and $\zeta \in C^\infty_c((0,t) \times \R)$. Mollify the coefficients in $(s,y)$: $\bar a_\varepsilon = \bar a * \xi_\varepsilon$, and likewise $\bar b_\varepsilon, \bar c_\varepsilon$; the mollifications obey the same bounds, are smooth, and converge almost everywhere. The terminal-value problem
\begin{equation*}
    \pp_s g + \bar a_\varepsilon\, \pp^2_{yy} g + \bar b_\varepsilon\, \pp_y g + \bar c_\varepsilon\, g \;=\; \zeta, \qquad g(t, \cdot) = 0,
\end{equation*}
has a unique bounded classical solution $g_\varepsilon$ \cite[Chapter IV, \S5, Theorem 5.1]{ladyzenskaja_linear_1968} (after the time reversal $s \mapsto t - s$ it is a Cauchy problem with smooth bounded coefficients; we extend the coefficients past the endpoints of the time interval before mollifying, and uniqueness among bounded solutions follows from the maximum principle). Two bounds hold uniformly in $\varepsilon$. First, the Feynman--Kac representation \cite[Theorem 5.7.6]{karatzas_brownian_1991} of $g_\varepsilon$ along the nondegenerate diffusion with coefficients $\bar b_\varepsilon$ and $\sqrt{2 \bar a_\varepsilon}$ gives $\|g_\varepsilon\|_\infty \leq T\, \|\zeta\|_\infty\, e^{\mathrm{Lip}(\tilde f)\, T}$ (the theorem asks for a nonnegative potential; tilting by $e^{-\mathrm{Lip}(\tilde f)\, s}$ arranges it and produces the exponential factor), together with Gaussian decay in $y$; the reaction term of the original equation enters the proof only through this exponential factor. Second, multiplying the equation by $\pp^2_{yy} g_\varepsilon$ and integrating over $(s,t) \times \R$: the first term contributes $\tfrac{1}{2}\|\pp_y g_\varepsilon(s)\|^2_{L^2}$ with a favourable sign, the terminal condition removes the boundary term, and $\bar a_\varepsilon \geq \min a > 0$ with Young's and Gronwall's inequalities gives
\begin{equation*}
    \sup_{s \leq t}\, \|\pp_y g_\varepsilon(s)\|_{L^2(\R)} + \|\pp^2_{yy} g_\varepsilon\|_{L^2((0,t) \times \R)} \;\leq\; C,
\end{equation*}
with $C$ depending on $\min a$, the coefficient bounds, $T$, and $\zeta$, but not on $\varepsilon$. The integrations by parts here are justified because $g_\varepsilon(s)$ and $\pp_y g_\varepsilon(s)$ are square-integrable with $\varepsilon$-uniform bounds; this is what the Gaussian decay from the representation provides.
Now insert $g_\varepsilon$ into \eqref{eq:dual-pairing}: since $g_\varepsilon(t, \cdot) = 0$ and $g_\varepsilon$ solves its equation,
\begin{equation*}
    \int_0^t \!\!\int_\R \Delta\, \zeta \;=\; -\int_0^t \!\!\int_\R \Delta\, \big[ (\bar a - \bar a_\varepsilon)\, \pp^2_{yy} g_\varepsilon + (\bar b - \bar b_\varepsilon)\, \pp_y g_\varepsilon + (\bar c - \bar c_\varepsilon)\, g_\varepsilon \big].
\end{equation*}
Each term vanishes as $\varepsilon \to 0$. For the first, Cauchy--Schwarz bounds it by $\|\Delta (\bar a - \bar a_\varepsilon)\|_{L^2} \|\pp^2_{yy} g_\varepsilon\|_{L^2}$; the second factor is bounded uniformly, and since $|\Delta (\bar a - \bar a_\varepsilon)|^2 \leq (2 \max a)^2\, |\Delta| \in L^1((0,t) \times \R)$ while $\bar a_\varepsilon \to \bar a$ almost everywhere, the first factor tends to $0$ by dominated convergence. The other two terms are handled the same way, with the uniform bounds on $\pp_y g_\varepsilon$ and $g_\varepsilon$. Hence $\int_0^t\!\!\int \Delta \zeta = 0$ for every such $\zeta$ and every $t < T$, so $\Delta = 0$ almost everywhere; since each $w_i(t, \cdot)$ is right-continuous in $y$ and weakly continuous in $t$, $\Delta \equiv 0$.
\end{proof}

\begin{proof}[Proof of Lemma~\ref{lem:regularity}]
Consider \eqref{eq:pde-general} in divergence form:
\begin{equation*}
    \pp_t \bar w \;=\; \pp_y\Big(\frac{\sigma^2(\bar w)}{2}\,\pp_y \bar w\Big) - \pp_y B(\bar w) + \tilde f(\bar w), \qquad \bar w(0, y) = w_0(y).
\end{equation*}
The equation is uniformly parabolic with $C^1$ coefficients, and the initial data is bounded and continuous, so the Cauchy problem has a bounded strong solution $\bar w$ for $t > 0$, with $\bar w$ and $\pp_y \bar w$ continuous \cite[Chapter V, \S6, Theorem 6.3]{ladyzenskaja_linear_1968} (we solve the first boundary value problems on $[-R, R]$ with boundary data $0$ and $1$, replacing $w_0$ by a mollified monotone version equal to $0$ near $-R$ and $1$ near $R$: the corner compatibility conditions then hold because $\tilde f(0) = \tilde f(1) = 0$, and the modification is confined to $\{|y| \geq R - 1\}$, so for each compact set the data is eventually unmodified and the limit attains the true data $w_0$; the interior estimates of \cite[Chapter V, \S6]{ladyzenskaja_linear_1968} depend only on the distance to the parabolic boundary, hence are uniform in $R$, and a diagonal extraction gives the solution on $\R$). We check that $\bar w$ is admissible. Comparison with the constant solutions keeps $\bar w$ in $[0, 1]$. Monotonicity in $y$ is preserved by comparison: for $h > 0$ the translate $\bar w(t, y + h)$ solves the same equation with initial datum $w_0(y+h) \geq w_0(y)$, hence stays above $\bar w$. The first moment propagates as in Step 2 of the proof of Theorem~\ref{thm:mean-field}, with the same truncation argument run on the equation instead of the particle system: testing against smooth truncations of $|y|$ and applying Gronwall's inequality bounds the first moment on $[0,T]$, and in particular no mass reaches infinity, so the monotone limits of $\bar w(t, \cdot)$ at $\mp\infty$ remain $0$ and $1$. Weak continuity in $t$ holds because the pairing with a test function is the time integral of bounded terms. Finally, since $\pp_y \bar w$ is continuous, the chain rule turns the divergence form into the double-divergence form \eqref{eq:pde-general} in the sense of distributions. Hence $\bar w$ is admissible, which proves existence.

For the regularity, each $\bar w(t, \cdot)$ with $t > 0$ is continuous, nondecreasing, and has limits $0$ and $1$: a continuous CDF; at $t = 0$ this is the continuity of $w_0$. For the continuity in time, the marginals are weakly continuous and every $\bar w(t, \cdot)$ is a continuous CDF, so Lemma~\ref{lem:polya} applied along $t_k \to t$ upgrades weak convergence of the marginals to sup-norm convergence. Sup-norm continuity in time and continuity in $y$ at each time yields joint continuity.
\end{proof}

\printbibliography[heading=bibintoc]

\end{document}